\DeclareMathOperator{\et}{and}
\DeclareMathOperator{\If}{if}
\DeclareMathOperator{\surf}{surf}
\DeclareMathOperator{\dx}{dx}
\DeclareMathOperator{\dy}{dy}
\DeclareMathOperator{\loc}{loc}
\DeclareMathOperator{\dt}{dt}
\DeclareMathOperator{\Op}{Op}
\DeclareMathOperator{\sgn}{sgn}
\DeclareMathOperator{\simp}{simp}
\DeclareMathOperator{\WW}{WW}
\DeclareMathOperator{\BW}{BW}
\DeclareMathOperator{\Wh}{Wh}
\DeclareMathOperator{\inv}{inv}
\title{Rigorous derivation of the Whitham equations from the water waves equations in the shallow water regime}
\author{Louis Emerald}
\newtheorem{mydef}{Definition}[section]
\newtheorem{mypp}[mydef]{Proposition}
\newtheorem{myppr}[mydef]{Property}
\newtheorem{mythm}[mydef]{Theorem}
\newtheorem{mylem}[mydef]{Lemma}
\newtheorem{mycoro}[mydef]{Corollary}
\newtheorem{myrem}[mydef]{Remark}
\newtheorem{mynots}[mydef]{Notations}
\newtheorem{mynot}[mydef]{Notation}
\newtheorem{myhyp}[mydef]{Hypothesis}
\newtheorem{mydefpp}[mydef]{Definition/Property}
\begin{document}
\numberwithin{equation}{section}

\maketitle
\begin{abstract}
    We derive the Whitham equations from the water waves equations in the shallow water regime using two different methods, thus obtaining a direct and rigorous link between these two models. The first one is based on the construction of approximate Riemann invariants for a Whitham-Boussinesq system and is adapted to unidirectional waves. The second one is based on a generalisation of Birkhoff's normal form algorithm for almost smooth Hamiltonians and is adapted to bidirectional propagation. In both cases we clarify the improved accuracy on the fully dispersive Whitham model with respect to the long wave Korteweg-de Vries approximation.  
\end{abstract}

\section{Introduction}
\subsection{Motivations}
In this paper we work on a unidirectionnal model for surface waves in coastal oceanography, the Whitham equation, introduced by Whitham in \cite{Whitham67} and \cite{Whitham74}. This equation, having the same dispersion relation as the general water waves model, is a full dispersion modification of the Korteweg-de Vries equation (KdV). As such one expects an improved accuracy and range of validity as a model for surface waves compared to the long wave KdV approximation. More precisely, we anticipate that in the presence of weak nonlinearities, the Whitham equation stays close to the water waves equations, even if the dispersion effects are not small, as obtained in \cite{Emerald2020} for the bidirectional full dispersion systems. 

The Whitham model has weaker dispersive effects compared to the KdV model and thus allows for wavebreaking and Stokes waves of maximal amplitude to occur, as one would expect of a model for surface waves in coastal oceanography. This was the historical reason for which Whitham introduced it. A rigorous proof of the existence of the wavebreaking phenomenon has been established by Hur \cite{Hur17} and Saut \& Wang \cite{SautWang20}. Ehrnström \& Wahlén \cite{EhrnstromWahlen19} and Truong \textit{et al.} \cite{TruongWahlenWheeler2020} proved the existence of Stokes waves of maximal amplitude. 

Besides \cite{Hur17,SautWang20,EhrnstromWahlen19, TruongWahlenWheeler2020} we emphasize some other works on the Whitham equation. Klein \textit{et al.} \cite{KleinLinaresPilodEtAl18} compared rigorously the solutions of the Whitham equation with those of the KdV equation and exhibited three different regimes of behaviour: scattering for small initial data, finite time blow-up and a KdV long wave regime. Ehrnström \& Wang \cite{EhrnstromWang} proved an enhanced existence time for solutions of the Whitham equation associated with small initial data. Ehrnström \& Kalisch \cite{EhrnstromKalisch2009} proved the existence of periodic traveling-waves solutions. Sanford \textit{et al.} \cite{SanfordKodamaCarterKalisch2014} established that large-amplitude periodic traveling-wave solutions are unstable, while those of small-amplitude are stable if their wavelength is long enough. Denoting by $\mu$ the shallow water parameter and by $\varepsilon$ the nonlinearity parameter, Klein \textit{et al.} \cite{KleinLinaresPilodEtAl18} proved the validity of the Whitham equation in the KdV regime $\{ \mu = \varepsilon \}$ by obtaining that the Whitham equation approximate the KdV equation at a precision of order $\varepsilon^2$. Moldabayev \textit{et al.} \cite{DutykhKalischMoldabayev2015} derived the Whitham equation from the Hamiltonian of the water waves equations in the unusual regime $\{ \varepsilon = e^{-\frac{\kappa}{\mu^{\nu/2}}}\}$, where $\kappa$ and $\nu$ are parameters numerically inferred from the solitary waves of the Whitham model. They also assumed the initial data of the water waves equations prepared to generate unidirectional waves. If we transpose their derivation in the general shallow water regime $\{ 0 \leq \mu \leq \mu_{\max}, 0 \leq \varepsilon \leq 1 \}$, where $\mu_{\max} > 0$ is fixed, we obtain a precision of order $\mu \varepsilon + \varepsilon^2$. To our point of view, the two previous results are not satisfying as the authors did not characterize the range of validity of the Whitham equation as a model for water waves with enough accuracy. In this work we propose to improve these results using two different methods.

The first method is based on an adaptation of the one used to derive the inviscid Burgers equations from the Nonlinear Shallow Water system using the Riemann invariants of the latter and requires, as in \cite{DutykhKalischMoldabayev2015}, that initial data are prepared to generate unidirectional waves. We show from it that the Whitham equation can be derived from the water waves equations at the order of precision $\mu\varepsilon$, which is the same order as for the Whitham-Boussinesq equations when considering general initial data (see \cite{Emerald2020} for a rigorous derivation of the latter equations in the shallow water regime). From this derivation, we prove that solutions of the water waves equations, associated with well-prepared initial data, can be approximated in the shallow water regime, to within order $\mu \varepsilon t$ in a time scale of order $\varepsilon^{-1}$, by a right or left propagating wave solving a Whitham-type equation.

The second method does not need well-prepared initial data. It is based on a generalization of Birkhoff's normal form algorithm for almost smooth functions (see Definition \ref{almost smooth functions}) developed by Bambusi in \cite{Bambusi20} to show that the solutions of the water waves equations can be approximated in the KdV regime, to within order $\varepsilon$ in a time scale of order $\varepsilon^{-1}$, by two counter-propagating waves solving KdV-type equations, thus improving both the results obtained by Schneider and Wayne in \cite{SchneiderWayne2012}, and those obtained by Bona, Colin and Lannes in \cite{BonaColinLannes2005}. If we transpose Bambusi's result in the shallow water regime, we get a precision of order $\varepsilon^2 + (\mu^2 + \mu\varepsilon + \varepsilon^2)t$ in a time scale of order $(\max{(\mu,\varepsilon)})^{-1}$, which tells us that the two KdV-type equations are not appropriate to approximate the water waves equations when $\mu$ is not small. Bambusi's method uses the local structure of the KdV equation. We adapt it to the nonlocal Whitham equation and prove that the solutions of the water waves equations can be approximated in the shallow water regime, to within order $\varepsilon^2 + (\mu\varepsilon  + \varepsilon^2) t$ in a time scale of order $(\max{(\mu,\varepsilon)})^{-1}$, by two counter-propagating waves solving Whitham-type equations. In addition, we express explicitly the loss of regularity needed to derive the two uncoupled Whitham-type equations solved by the counter-propagating waves. 

We now compare the results from the first method and Bambusi's method for the KdV and the Whitham equations. In the presence of strong nonlinearities ($\varepsilon \geq \mu$), we get from the first method a precision order $\mu$ in a time scale of order $\varepsilon^{-1}$, whereas Bambusi's method for the KdV equations and for the Whitham equations gives a precision of order $\mu + \varepsilon$ in the same time scale. The $\varepsilon$ in the latter order of precision comes from the coupling effects between the two counter-propagating waves. Hence, if $\mu$ is small, we have a good approximation from the Whitham equation, and not from the KdV equation, of the solutions of the water waves equations associated with well-prepared initial data, even if $\varepsilon$ is not small. In the presence of weak nonlinearities ($\varepsilon \leq \mu$), we get from the first method and Bambusi's method for the Whitham equations a precision of order $\varepsilon$ in a time scale of order $\mu^{-1}$, whereas Bambusi's method for the KdV equations gives a precision of order $\mu + \varepsilon$ in the same time scale. Hence, if $\varepsilon$ is small, we have a good approximation from the Whitham equation, and not from the KdV equation, of the solutions of the water waves equations, even if $\mu$ is not small. 

\subsection{Main results}

The starting point of our study is the water waves equations:
\begin{align}\label{WaterWavesEquations}
    \begin{cases}
        \partial_t \zeta - \frac{1}{\mu}\mathcal{G}^{\mu}[\varepsilon\zeta]\psi = 0, \\
        \partial_t \psi + \zeta + \frac{\varepsilon}{2}(\partial_x\psi)^2-\frac{\mu\varepsilon}{2}\frac{(\frac{1}{\mu}\mathcal{G}^{\mu}[\varepsilon\zeta]\psi + \varepsilon\partial_x\zeta\cdot\partial_x\psi)^2}{1+\varepsilon^2\mu(\partial_x\zeta)^2} = 0.
    \end{cases}
\end{align}

Here 
\begin{itemize}
    \item The free surface elevation is the graph of $\zeta$, which is a function of time $t$ and horizontal space $x\in\mathbb{R}$.
    \item $\psi(t,x)$ is the trace at the surface of the velocity potential. 
    \item $\mathcal{G}^{\mu}$ is the Dirichlet-Neumann operator defined later in Definition \ref{Dirichlet to Neumann operator}.
\end{itemize}
Every variable and function in \eqref{Hamiltonian Water Waves equations} is compared with physical characteristic parameters of the same dimension. Among those are the characteristic water depth $H_0$, the characteristic wave amplitude $a_{\surf}$ and the characteristic wavelength $L$. From these comparisons appear two adimensional parameters of main importance:
\begin{itemize}
    \item $\mu \colonequals \frac{H_0^{2}}{L^{2}}$: the shallow water parameter,
    \item $\varepsilon \colonequals \frac{a_{\surf}}{H_0}$: the nonlinearity parameter.
\end{itemize}
We refer to \cite{WWP} for details on the derivation of these equations.
\indent 

In \cite{Zakharov68}, Zakharov proved that the water waves system \eqref{WaterWavesEquations} enjoys an Hamiltonian formulation. Let $H_{\WW}$ be defined
by 
\begin{align}\label{Hamiltonian Water Waves equations}
    H_{\WW} \colonequals \frac{1}{2} \int_{\mathbb{R}} \zeta^2 \dx + \frac{1}{2} \int_{\mathbb{R}} \psi \frac{1}{\mu} \mathcal{G}^{\mu}[\varepsilon\zeta]\psi \dx.
\end{align}
Then \eqref{WaterWavesEquations} is equivalent to the Hamilton equations
\begin{align*}
    \begin{cases}
        \partial_t \zeta = \delta_{\psi} H_{\WW},\\
        \partial_t \psi = -\delta_{\zeta} H_{\WW},
    \end{cases}
\end{align*}
where $\delta_{\zeta}$ and $\delta_{\psi}$ are functional derivatives.

\indent 

Before giving the main definitions of this section, here is one assumption maintained throughout this paper.

\begin{myhyp}
    A fundamental hypothesis is the lower boundedness by a positive constant of the water depth (non-cavitation assumption):
    \begin{align}\label{Non-Cavitation Hypothesis}
        \exists h_{\min}>0, \forall x \in \mathbb{R}, \ \ h \colonequals 1 + \varepsilon \zeta(t,x) \geq h_{\min}.
    \end{align}
\end{myhyp}

Moreover, we will always work in the shallow water regime which we define here.
\begin{mydef}\label{shallow water regime}
    Let $\mu_{\max} > 0$, we define the shallow water regime $\mathcal{A} \colonequals \{(\mu,\varepsilon), \ \ 0\leq \mu\leq \mu_{\max}, \ \ 0 \leq \varepsilon \leq 1\}$.
\end{mydef}

In what follows we need some notations on the functional framework of this paper.
\begin{mynots}\label{functional framework}
    \begin{itemize}
        \item For any $\alpha \geq 0$ we will respectively denote $H^{\alpha}(\mathbb{R}), W^{\alpha,1}(\mathbb{R})$ and $W^{\alpha,\infty}(\mathbb{R})$ the Sobolev spaces of order $\alpha$ in respectively $L^2(\mathbb{R}), L^1(\mathbb{R})$ and $L^{\infty}(\mathbb{R})$. We will denote their associated norms by $|\cdot|_{H^{\alpha}}, |\cdot|_{W^{\alpha,1}}$ and $|\cdot|_{W^{\alpha,\infty}}$. 
        \item For any $\alpha \geq 0$ we will denote $\dot{H}^{\alpha+1}(\mathbb{R}) \colonequals \{ f \in L_{\loc}^{2}(\mathbb{R}), \ \ \partial_x f \in H^{\alpha}(\mathbb{R}) \}$ and $\dot{W}^{\alpha+1,1}(\mathbb{R}) \colonequals \{ f \in L_{\loc}^{1}(\mathbb{R}), \ \ \partial_x f \in W^{\alpha,1}(\mathbb{R}) \}$ the Beppo-Levi spaces of order $\alpha$. Their associated seminorms are respectively $|\cdot|_{\dot{H}^{\alpha+1}} \colonequals |\partial_x(\cdot)|_{H^{\alpha}}$ and $|\cdot|_{\dot{W}^{\alpha+1,1}} \colonequals |\partial_x(\cdot)|_{W^{\alpha,1}}$. 
    \end{itemize}
\end{mynots}

Now we define the Dirichlet-Neumann operator.
\begin{mydef}\label{Dirichlet to Neumann operator}
    For all $\zeta$ and $\psi$ sufficiently smooth, let $\phi$ be the velocity potential solving the elliptic problem
    \begin{align*}
        \begin{cases}
            (\mu \partial_x^2 + \partial_z^2) \phi = 0, \\
            \phi|_{z=\varepsilon \zeta} = \psi, \ \ \partial_z\phi|_{z=-1} = 0. 
        \end{cases}
    \end{align*}
    where $z \in (-1,\varepsilon \zeta)$ is the vertical space variable.
    
    We define the Dirichlet-Neumann operator by the formula
    \begin{align*}
        \mathcal{G}^{\mu}[\varepsilon\zeta]\psi = \sqrt{1 + \varepsilon^2 (\partial_x\zeta)^2}\partial_{\textbf{n}^{\mu}}\phi|_{\varepsilon\zeta},
    \end{align*}
    where $\textbf{n}^{\mu}$ is the outward normal vector of the free surface $\varepsilon \zeta$. It depends on time and space.
    
    $\mathcal{G}^{\mu}$ is linear in $\psi$ and nonlinear in $\zeta$. See \cite{WWP} for a thorough study of this operator.
\end{mydef}

We also recall the definition of a Fourier multiplier.
\begin{mydef}
    Let $u:\mathbb{R} \to \mathbb{R}$ be a tempered distribution, let $\widehat{u}$ be its Fourier transform. Let $F \in \mathcal{C}^{\infty}(\mathbb{R})$ be a smooth function such that there exists $m \in \mathbb{Z}$ for which $\forall \beta \geq 0$, $|\partial_{\xi}^{\beta} F(\xi)| \leq \widetilde{C}_{\beta} (1+ |\xi|)^{m-|\beta|}$, where $\widetilde{C}_{\beta} > 0$ is a constant depending on $\beta$. Then the Fourier multiplier associated with $F$ is denoted $\mathrm{F}(D)$ (denoted $\mathrm{F}$ when no confusion is possible) and defined by the formula:
    \begin{align*}
        \widehat{\mathrm{F}(D)u}(\xi) = F(\xi)\widehat{u}(\xi).
    \end{align*}
\end{mydef}

The first method to rigorously justify the Whitham equations from the water waves equations gives the two following results.

\begin{mypp}\label{Diagonalization of the WB equations}
    Let $\mu_{\max} > 0$. Let also $\mathrm{F}_{\mu} = \sqrt{\frac{\tanh{(\sqrt{\mu}|D|)}}{\sqrt{\mu}|D|}}$ be a Fourier multiplier. There exists $n \in \mathbb{N}^*$ such that for any $(\mu,\varepsilon) \in \mathcal{A}$ and $\zeta \in L^{\infty}_t H^{\alpha+n}_x(\mathbb{R})$ and $\psi \in L^{\infty}_t\dot{H}^{\alpha+n}_x(\mathbb{R})$ solutions of the water waves equations \eqref{WaterWavesEquations} satisfying \eqref{Non-Cavitation Hypothesis}, there exists $R_1, R_2 \in H^{\alpha}(\mathbb{R})$ uniformly bounded in $(\mu,\varepsilon)$ such that the quantities
    \begin{align}\label{Diagonalized system introduction}
        u^+ = \frac{\sqrt{h}-1}{\varepsilon} + \frac{1}{2}\mathrm{F}_{\mu}^{-1}[v], \ \ u^- = -\frac{\sqrt{h}-1}{\varepsilon} + \frac{1}{2}\mathrm{F}_{\mu}^{-1}[v],
    \end{align}
    where $v = \mathrm{F}_{\mu}^2[\partial_x \psi]$, satisfy the equations
    \begin{align*}
        \begin{cases}
            \partial_t u^+ + (\varepsilon\frac{3 u^+ + u^-}{2}+1) \mathrm{F}_{\mu}\partial_x u^+ = \mu\varepsilon R_1, \\
            \partial_t u^- + (\varepsilon\frac{u^+ + 3u^-}{2}-1) \mathrm{F}_{\mu}\partial_x u^- = \mu\varepsilon R_2.
        \end{cases}
    \end{align*}
\end{mypp}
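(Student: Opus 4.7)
The overall strategy is the classical Riemann invariants method adapted to include the dispersive Fourier multiplier $\mathrm{F}_\mu$. It proceeds in two main stages: first, a reduction of the water waves system \eqref{WaterWavesEquations} to a Whitham--Boussinesq system with accuracy $O(\mu\varepsilon)$; second, a change of unknowns from $(\zeta,v)$ to $(u^+,u^-)$ that approximately diagonalizes the hyperbolic part. The algebraic identity driving step two is that, in the purely non-dispersive limit $\mathrm{F}_\mu = 1$, the formulas \eqref{Diagonalized system introduction} reduce (up to a factor $1/2$) to the classical Riemann invariants $R^\pm = \partial_x\psi \pm 2(\sqrt{h}-1)/\varepsilon$ of the Nonlinear Shallow Water system, whose characteristic speeds are $\pm\sqrt{h} + \varepsilon\partial_x\psi = \pm 1 + \varepsilon(3 u^\pm + u^\mp)/2 + O(\varepsilon^2)$, matching exactly the speeds appearing in the statement.

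To carry out the first stage, I would invoke the shallow-water expansions of the Dirichlet--Neumann operator available in \cite{WWP} (and used in \cite{Emerald2020}) to write
\[
\frac{1}{\mu}\mathcal{G}^\mu[\varepsilon\zeta]\psi = -\partial_x\bigl(h\,\mathrm{F}_\mu^2\partial_x\psi\bigr) + \mu\varepsilon\, r_1,\qquad |r_1|_{H^\alpha}\lesssim 1,
\]
and the analogous $O(\mu\varepsilon)$ identity for the Bernoulli term $(\mathcal{G}^\mu\psi + \varepsilon\mu\partial_x\zeta\partial_x\psi)^2/(1+\varepsilon^2\mu(\partial_x\zeta)^2)$, which up to $O(\mu\varepsilon)$ replaces the Bernoulli nonlinearity by the standard $\tfrac{\varepsilon}{2}(\partial_x\psi)^2$. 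Introducing $v = \mathrm{F}_\mu^2\partial_x\psi$ and taking the $x$-derivative of the second equation then produces the Whitham--Boussinesq system
\[
\partial_t\zeta + \partial_x(h\,\mathrm{F}_\mu^{-2}v) = \mu\varepsilon\, \tilde r_1,\qquad \partial_t v + \mathrm{F}_\mu^2\partial_x\zeta + \tfrac{\varepsilon}{2}\mathrm{F}_\mu^2\partial_x(\mathrm{F}_\mu^{-2}v)^2 = \mu\varepsilon\, \tilde r_2,
\]
which is precisely the one justified in \cite{Emerald2020}.

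For the second stage I would differentiate the definitions \eqref{Diagonalized system introduction}: $\partial_t u^\pm = \pm\frac{\partial_t h}{2\varepsilon\sqrt{h}} + \frac{1}{2}\mathrm{F}_\mu^{-1}\partial_t v$. Substituting the Whitham--Boussinesq system for $\partial_t h = \varepsilon\partial_t\zeta$ and for $\partial_t v$, then commuting $\mathrm{F}_\mu$ through products using $\mathrm{F}_\mu(fg) = f\,\mathrm{F}_\mu g + [\mathrm{F}_\mu, f]g$ (where the commutator gains a factor $\mu$ by symbolic calculus in $\sqrt{\mu}|D|$), one reorganizes the system as
\[
\partial_t u^\pm \pm \mathrm{F}_\mu\partial_x u^\pm + \varepsilon\,N^\pm(u^+,u^-) = \mu\varepsilon\,R_{1,2},
\]
where $N^\pm$ collects the quadratic nonlinearities. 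The crucial verification is that $N^\pm = \bigl(\tfrac{3u^\pm + u^\mp}{2}\bigr)\mathrm{F}_\mu\partial_x u^\pm$ modulo a residue that is itself $O(\mu)$: this follows from the same Taylor expansion $\sqrt{h} = 1 + \varepsilon(u^+-u^-)/2 + O(\varepsilon^2)$ and the identity $\mathrm{F}_\mu^{-1}v = u^+ + u^-$ that make the NSW Riemann invariants work in the dispersionless case, with all $\mathrm{F}_\mu$-commutators absorbed into $R_{1,2}$.

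The main obstacle will be bookkeeping the residues $R_{1,2}$ in $H^\alpha$: terms arising from $[\mathrm{F}_\mu,f]$ and from the product formula for $\sqrt{h}$ contribute $\mu$-small but derivative-losing contributions, while the Whitham--Boussinesq errors contribute $\varepsilon$-small but $\mu\varepsilon$-scaled contributions, and one must verify that no $O(\mu)$ or $O(\varepsilon)$ term survives. This forces the loss of $n$ derivatives and uses tame product/commutator estimates for $\mathrm{F}_\mu$ together with the non-cavitation hypothesis \eqref{Non-Cavitation Hypothesis} (to control $1/\sqrt{h}$ uniformly). Once these estimates are in place, uniform bounds on $R_1,R_2$ in $(\mu,\varepsilon)\in\mathcal{A}$ follow from the uniform bounds on $(\zeta,\psi)$.
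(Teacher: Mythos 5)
Your overall two-stage strategy — reduce the water waves equations to a Whitham--Boussinesq system accurate to $O(\mu\varepsilon)$, then change to approximate Riemann invariants $u^\pm$ and absorb the $\mathrm{F}_\mu$-commutator errors — is exactly the route the paper takes. The paper carries out the second stage by viewing the WB system as $\partial_t U + \Op(A(U,\xi))\partial_x U = 0$, explicitly diagonalizing the matrix symbol $A = PDP^{-1}$, and keeping track of three separate families of commutator errors ($\Op(P^{-1}A) - \Op(P^{-1})\Op(A)$, $\Op(P)\Op(P^{-1}) - I$, and $\Op(D) - \Op(P^{-1}A)\Op(P)$); your ``differentiate $u^\pm$ and substitute'' is the same computation in less systematic form, and the key commutator bounds you invoke ($[\mathrm{F}_\mu, f]$ gains a factor $\mu$, and either the other factor is $O(\varepsilon)$-small or the whole term is multiplied by $\varepsilon$) are exactly the paper's Lemma \ref{commutator estimates}.

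However, there is a genuine error in your intermediate Whitham--Boussinesq system that, if carried through, would break the proof. You correctly record the Dirichlet--Neumann expansion
$\tfrac{1}{\mu}\mathcal{G}^\mu[\varepsilon\zeta]\psi = -\partial_x(h\,\mathrm{F}_\mu^2\partial_x\psi) + O(\mu\varepsilon)$,
but then, after introducing $v = \mathrm{F}_\mu^2\partial_x\psi$, you write the mass equation as $\partial_t\zeta + \partial_x(h\,\mathrm{F}_\mu^{-2}v) = O(\mu\varepsilon)$; since $\mathrm{F}_\mu^{-2}v = \partial_x\psi$, this is $\partial_t\zeta + \partial_x(h\,\partial_x\psi) = O(\mu\varepsilon)$, which is wrong: you have dropped the $\mathrm{F}_\mu^2$ in front of $\partial_x\psi$. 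The correct substitution gives $\partial_x(h\,\mathrm{F}_\mu^2\partial_x\psi) = \partial_x(hv)$, i.e.\ the system (\ref{Whitham-Boussinesq system}) of the paper. Your version discards the full dispersion relation from the first equation (its linearization is the non-dispersive wave equation $\partial_t^2\zeta = \partial_x^2\zeta$), so the remainder it leaves is only $O(\mu)$, not $O(\mu\varepsilon)$, and the downstream diagonalization would produce a plain transport term $\partial_x u^\pm$ instead of the $\mathrm{F}_\mu\partial_x u^\pm$ in the statement. Once this is corrected to $\partial_t\zeta + \partial_x(hv) = O(\mu\varepsilon)$, the rest of your outline is sound and matches the paper's derivation.
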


\begin{myrem}\label{Remark Riemann invariants}
    The quantities $u^+$ and $u^-$ can almost be seen as Riemann invariants of the Whitham-Boussinesq equations
    \begin{align}\label{Whitham-Boussinesq system introduction}
    \begin{cases}
        \partial_t \zeta + \partial_x^2 \psi  + \varepsilon \partial_x(\zeta \partial_x \psi)  + \varepsilon \zeta \partial_x^2 \psi  = 0, \\
        \partial_t \mathrm{F}_{\mu}^2[\partial_x \psi] + \mathrm{F}_{\mu}^2[\partial_x \zeta] + \varepsilon \partial_x\psi \partial_x^2 \psi = 0,
    \end{cases}
\end{align}
for which we know the rigorous derivation from the water waves equations (see \cite{Emerald2020}). This is the main idea to prove this result.
\end{myrem}

Using Theorem 4.16 and Theorem 4.18 of \cite{WWP} (the Rayleigh-Taylor condition is always satisfied when the bottom is flat, see Subsection 4.3.5 in \cite{WWP}) we get the following corollary.

\begin{mycoro}\label{First corollary introduction}
    For any $\alpha \geq 0$, there exists $n \in \mathbb{N}^*$ such that for any $(\mu,\varepsilon) \in \mathcal{A}$, we have what follows. Consider the Cauchy problem for the water wave problem \eqref{WaterWavesEquations} with initial conditions $(\zeta_0,\psi_0) \in H^{\alpha+n}(\mathbb{R})\times \dot{H}^{\alpha+n+1}(\mathbb{R})$ satisfying the non-cavitation assumption \eqref{Non-Cavitation Hypothesis}, 
    and denote by $(\zeta, \psi)$ the corresponding solution. Let $(u^+_{e,0},u^-_{e,0})$ be defined by the formulas \eqref{Diagonalized system introduction} applied to $(\zeta_0,\psi_0)$. There exists a unique solution $(u^+_e,u^-_e)$ of the exact diagonalized system 
    \begin{align}\label{exact diagoanlized system introduction}
        \begin{cases}
            \partial_t u^+_e + (\varepsilon\frac{3 u^+_e + u^-_e}{2}+1) \mathrm{F}_{\mu}\partial_x u^+_e = 0, \\
            \partial_t u^-_e + (\varepsilon\frac{u^+_e + 3u^-_e}{2}-1) \mathrm{F}_{\mu}\partial_x u^-_e = 0,
        \end{cases}
    \end{align}
    with initial conditions $(u^+_{e,0},u^-_{e,0})$, which satisfy the following property: denote by $(\zeta_{\rm{c}}, \psi_{\rm{c}})$ the following quantities
    \begin{align}\label{zetac and psic}
        \zeta_{\rm{c}} = \frac{1}{\varepsilon}\Big[ (\frac{\varepsilon}{2}(u^+_e - u^-_e) + 1)^2 - 1 \Big] \ \ \et \ \ \psi_{\rm{c}} = \int_0^x \mathrm{F}_{\mu}^{-1}[u^+_e + u^-_e] \dx,
    \end{align}
    then for all times $t \in [0,\frac{T}{\varepsilon}]$, one has 
    \begin{align*}
        |(\zeta-\zeta_{\rm{c}}, \psi -\psi_{\rm{c}})|_{H^{\alpha}\times \dot{H}^{\alpha+1}} \leq \mu \varepsilon C t,
    \end{align*}
    where $C, T^{-1} = C(\frac{1}{h_{\min}}, \mu_{\max}, |\zeta_0|_{H^{\alpha+n}}, |\psi_0|_{\dot{H}^{\alpha+n+1}})$. 
\end{mycoro}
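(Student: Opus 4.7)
The plan is to combine Proposition~\ref{Diagonalization of the WB equations} with a uniform Cauchy theory for the exact diagonalized system \eqref{exact diagoanlized system introduction} and a consistency/stability argument of Gronwall type. First, by Theorem~4.16 and Theorem~4.18 of \cite{WWP}, the water waves Cauchy problem with initial data $(\zeta_0,\psi_0) \in H^{\alpha+n}\times \dot{H}^{\alpha+n+1}$ is well-posed on some interval $[0,T_0/\varepsilon]$ with $T_0^{-1}$ a non-decreasing function of $1/h_{\min}$, $\mu_{\max}$ and the initial norms, and the solution $(\zeta,\psi)$ stays uniformly bounded in $(\mu,\varepsilon)\in\mathcal{A}$. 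Applying Proposition~\ref{Diagonalization of the WB equations} to this solution yields approximate Riemann invariants $u^\pm \in H^{\alpha}(\mathbb{R})$ satisfying the almost diagonalized system with source terms $\mu\varepsilon R_1,\mu\varepsilon R_2$ of size $O(1)$ in $H^\alpha$.

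Second, I would establish local well-posedness, uniform in $(\mu,\varepsilon)$, for the exact system \eqref{exact diagoanlized system introduction} with data $(u^+_{e,0},u^-_{e,0})$ defined by \eqref{Diagonalized system introduction} applied to $(\zeta_0,\psi_0)$. Since $\mathrm{F}_\mu$ is an order-zero Fourier multiplier bounded on $H^\alpha$ uniformly in $\mu\in[0,\mu_{\max}]$ and the principal transport speeds are $\pm 1 + O(\varepsilon)$, standard quasilinear energy estimates (commuting $\langle D\rangle^\alpha$ through the equations and using the skew-adjointness of $\mathrm{F}_\mu\partial_x$) give a unique solution $(u^+_e,u^-_e) \in \mathcal{C}([0,T_1/\varepsilon];H^\alpha)$, with $T_1^{-1}$ controlled by the initial norms and $h_{\min}$.

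Third, setting $w^\pm \colonequals u^\pm - u^\pm_e$, subtracting the approximate and exact systems and computing $\frac{d}{dt}|w^\pm|_{H^\alpha}^2$ gives, after absorbing symmetric transport terms and estimating commutators with $\mathrm{F}_\mu$,
\begin{align*}
    \frac{d}{dt}|w^\pm|_{H^\alpha}^2 \leq C\varepsilon \bigl(|w^+|_{H^\alpha}^2+|w^-|_{H^\alpha}^2\bigr) + C\mu\varepsilon\bigl(|w^+|_{H^\alpha}+|w^-|_{H^\alpha}\bigr),
\end{align*}
with $C$ depending on the uniform bounds on $u^\pm,u^\pm_e$. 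Since $w^\pm(0)=0$, Gronwall over $[0,T/\varepsilon]$ with $T=\min(T_0,T_1)$ yields $|w^\pm|_{H^\alpha} \leq C\mu\varepsilon t$. A continuity argument ensures that $(u^+_e,u^-_e)$ stays in the ball controlling the coefficients so that this bound is propagated.

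Finally, I would invert the change of variables: from \eqref{Diagonalized system introduction} one reads $\sqrt{h}=1+\tfrac{\varepsilon}{2}(u^+-u^-)$ and $\mathrm{F}_\mu^{-1}v = u^++u^-$, so using \eqref{zetac and psic},
\begin{align*}
    \zeta-\zeta_{\rm c} = \tfrac{1}{\varepsilon}\bigl[\bigl(1+\tfrac{\varepsilon}{2}(u^+-u^-)\bigr)^2 - \bigl(1+\tfrac{\varepsilon}{2}(u^+_e-u^-_e)\bigr)^2\bigr], \quad \partial_x(\psi-\psi_{\rm c}) = \mathrm{F}_\mu^{-1}\bigl[(u^++u^-)-(u^+_e+u^-_e)\bigr].
\end{align*}
The tame product estimates in $H^\alpha$ and the uniform $H^\alpha$-boundedness of $\mathrm{F}_\mu^{-1}$ then translate the bound on $w^\pm$ into $|(\zeta-\zeta_{\rm c},\psi-\psi_{\rm c})|_{H^\alpha\times\dot{H}^{\alpha+1}}\leq C\mu\varepsilon t$. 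The main obstacle is step three: obtaining an energy estimate whose constant is \emph{uniform} in $(\mu,\varepsilon)\in\mathcal{A}$ despite the $\mu$-dependent dispersive operator $\mathrm{F}_\mu\partial_x$. This is precisely the payoff of having chosen the almost Riemann invariants so that the principal part of the system is already diagonal with a symmetric structure, reducing the required commutator control with $\mathrm{F}_\mu$ to terms that are either of order zero uniformly in $\mu$, or of size $O(\mu\varepsilon)$ and thus absorbed in the remainder.
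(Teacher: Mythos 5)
Your proposal follows essentially the same route as the paper: well-posedness of the water waves problem from \cite{WWP}, consistency through the almost-Riemann-invariant change of unknowns of Proposition~\ref{Diagonalization of the WB equations}, a $(\mu,\varepsilon)$-uniform local well-posedness and energy estimate for the exact diagonalized system (the paper's Proposition~\ref{local existence}), a Gronwall-type stability estimate between $u^\pm$ and $u^\pm_e$ (the paper's Proposition~\ref{Stability}), and inversion of the change of variables to pass back to $(\zeta_{\rm c},\psi_{\rm c})$. The only imprecision is the phrase ``uniform $H^\alpha$-boundedness of $\mathrm{F}_\mu^{-1}$'': this operator is not bounded on $H^\alpha$ but only from $H^{\alpha+1}$ (or $H^{\alpha+1/2}$) to $H^\alpha$ uniformly for $\mu\le\mu_{\max}$, which costs a derivative absorbed by choosing $n$ slightly larger, so it does not affect the conclusion.
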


\begin{myrem}
    The existence and uniqueness of the solution $(u_e^+,u_e^-)$ of the exact diagonalized system \eqref{exact diagoanlized system introduction} with initial conditions $(u^+_{e,0},u^-_{e,0})$ on the required time scale is given by Proposition \ref{local existence}.
\end{myrem}

To write the next result we need another notation.
\begin{mynot}\label{Big O Sobolev spaces}
    Let $k\in\mathbb{N}$ and $ l\in\mathbb{N}$. A function $R$ is said to be of order $O(\mu^k\varepsilon^l)$, denoted $R=O(\mu^k\varepsilon^l),$ if divided by $\mu^k\varepsilon^l$ this function is uniformly bounded with respect to $(\mu,\varepsilon) \in \mathcal{A}$ in the Sobolev norms $|\cdot|_{H^{\alpha}}$, $\alpha \geq 0$.
\end{mynot}

\begin{mypp}\label{First main result}
    With the same hypotheses and notations as in Proposition \ref{Diagonalization of the WB equations}, if $u^-(0) = O(\mu)$, then there exists $T_1 > 0$ such that for all times $t \in [0, \frac{T_1}{\epsilon}]$, $u^-(t) = O(\mu)$. Moreover for these times, $u^+$ satisfies the Whitham equation up to a remainder term of order $O(\mu \varepsilon)$, i.e.
    \begin{align*}
        \partial_t u^+ + \mathrm{F}_{\mu}[\partial_x u^+] + \frac{3\varepsilon}{2}u^+\partial_x u^+ = O(\mu\epsilon).
    \end{align*}
    If instead $u^+(0) = O(\mu)$, then there exists $T_2 > 0$ such that for all times $t \in [0, \frac{T_2}{\epsilon}]$, $u^+(t) = O(\mu)$. Moreover for these times, $u^-$ satisfies the counter propagating Whitham equation up to remainder term of order $O(\mu\varepsilon)$, i.e.
    \begin{align}\label{left front Whitham equation}
        \partial_t u^- - \mathrm{F}_{\mu}[\partial_x u^-] + \frac{3\varepsilon}{2}u^-\partial_x u^- = O(\mu\varepsilon).
    \end{align}
\end{mypp}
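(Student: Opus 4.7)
The strategy exploits the almost-diagonalization of Proposition \ref{Diagonalization of the WB equations}: since the left-going component $u^-$ starts at size $\mu$ and its evolution equation carries a source of size $\mu\varepsilon$, a Gronwall-type argument will keep $u^-$ of order $\mu$ on the time scale $\varepsilon^{-1}$. Once this is established, every $u^-$-dependent contribution to the $u^+$ equation becomes $O(\mu\varepsilon)$, and the only remaining discrepancy with the Whitham equation is the mismatch between $\mathrm{F}_\mu\partial_x$ and $\partial_x$ inside the Burgers nonlinearity, which is itself $O(\mu\varepsilon)$ at the cost of a few extra derivatives thanks to the pointwise bound $|\mathrm{F}_\mu(\xi)-1| \leq C\min(1,\mu\xi^2)$ inherited from the Taylor expansion of the Whitham symbol.

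The core of the proof is an $H^\alpha$ energy estimate on
\[ \partial_t u^- - \mathrm{F}_\mu\partial_x u^- + \varepsilon\,\frac{u^+ + 3u^-}{2}\,\mathrm{F}_\mu \partial_x u^- = \mu\varepsilon R_2. \]
The operator $\mathrm{F}_\mu\partial_x$ is a skew-adjoint Fourier multiplier and commutes with $\Lambda^\alpha \colonequals (1-\partial_x^2)^{\alpha/2}$, so the purely linear part produces no contribution to $\frac{d}{dt}|u^-|_{H^\alpha}^2$. The variable-coefficient transport term is handled via a Kato-Ponce commutator estimate for $[\Lambda^\alpha, (u^+ + 3u^-)/2]\,\mathrm{F}_\mu\partial_x u^-$, together with a symmetrisation of the resulting principal term after one integration by parts. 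This yields
\[ \frac{d}{dt} |u^-|_{H^\alpha} \leq C\varepsilon \bigl(|u^+|_{H^{\alpha+n_0}} + |u^-|_{H^{\alpha+n_0}}\bigr)\,|u^-|_{H^\alpha} + C\mu\varepsilon\,|R_2|_{H^\alpha} \]
for some fixed derivative loss $n_0$. Combining this with the uniform $H^{\alpha+n_0}$ bound on $(u^+, u^-)$ provided by Corollary \ref{First corollary introduction} and the assumption $u^-(0) = O(\mu)$, Gronwall's lemma gives $|u^-(t)|_{H^\alpha} \leq e^{C\varepsilon t}\bigl(|u^-(0)|_{H^\alpha} + C\mu\varepsilon t\bigr)$, which remains of order $\mu$ as long as $\varepsilon t \leq T_1$ for some $T_1 > 0$ independent of $(\mu,\varepsilon)$.

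With this smallness in hand I rewrite the $u^+$ equation from Proposition \ref{Diagonalization of the WB equations} in the form
\[ \partial_t u^+ + \mathrm{F}_\mu\partial_x u^+ + \frac{3\varepsilon}{2} u^+ \partial_x u^+ = -\frac{\varepsilon}{2}\,u^-\,\mathrm{F}_\mu\partial_x u^+ - \frac{3\varepsilon}{2}\,u^+\,(\mathrm{F}_\mu - \mathrm{Id})\partial_x u^+ + \mu\varepsilon R_1. \]
The first right-hand side term is $O(\mu\varepsilon)$ directly from the previous paragraph. For the second, the bound $|\mathrm{F}_\mu(\xi)-1| \leq C\min(1,\mu\xi^2)$ gives $|(\mathrm{F}_\mu-\mathrm{Id})\partial_x u^+|_{H^\alpha} \leq C\mu\,|u^+|_{H^{\alpha+3}}$ uniformly in $(\mu,\varepsilon)$; combined with a tame product estimate in $H^\alpha$, this term is also $O(\mu\varepsilon)$. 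Both bounds require $n$ in the statement to be large enough to absorb the derivative losses from the commutator and symbol estimates. The symmetric case $u^+(0)=O(\mu)$ follows from the same argument with the roles of $u^+$ and $u^-$ exchanged, and yields equation \eqref{left front Whitham equation}.

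The main obstacle is the nonlocal energy estimate of the second paragraph: the quasilinear transport coefficient $\varepsilon(u^++3u^-)/2$ does not commute with the Fourier multiplier $\mathrm{F}_\mu\partial_x$, so one must carefully track commutators of the form $[\Lambda^\alpha, f]\,\mathrm{F}_\mu\partial_x$ and symmetrise the principal contribution, in order to close the estimate with a constant that depends only on moderately higher Sobolev norms of $(u^+,u^-)$, norms that are in turn uniformly controlled in $(\mu,\varepsilon)$ by Corollary \ref{First corollary introduction}.
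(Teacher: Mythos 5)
Your proof is correct, and it reaches the same conclusion as the paper, but the route to the smallness persistence of $u^-$ is genuinely different. The paper proceeds indirectly: it introduces the exact diagonalized system \eqref{exact diagoanlized system introduction}, proves persistence of smallness for the exact solution $u_e^-$ (Corollary \ref{persistence of smallness}, via the energy estimates of Lemma \ref{Energy estimates of order beta}), then uses the stability result of Proposition \ref{Stability} to transfer this back to $u^-$ by the triangle inequality $|u^-| \leq |u^- - u_e^-| + |u_e^-|$. You instead run the energy estimate and Gronwall argument directly on the inexact $u^-$ equation with the $O(\mu\varepsilon)$ source term, which is shorter and closes in one pass. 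Both approaches rest on the same underlying tool (the $H^\alpha$ energy estimate for quasilinear transport with the skew-adjoint multiplier $\mathrm{F}_\mu\partial_x$ and a commutator step); the paper's detour through the exact system is not gratuitous, though — it is reused essentially as stated in the proof of Proposition \ref{Corollary convergence introduction}, so the paper is amortizing the setup cost over two results. One small bookkeeping slip: the uniform $H^{\alpha+n_0}$ bound on $(u^+,u^-)$ that you feed into Gronwall is not provided by Corollary \ref{First corollary introduction}; it follows directly from the hypothesis of Proposition \ref{Diagonalization of the WB equations} that $(\zeta,\psi)$ are $L^\infty_t H^{\alpha+n}_x \times L^\infty_t \dot H^{\alpha+n}_x$ solutions of the water waves equations, together with the boundedness of the maps $\zeta \mapsto (\sqrt{1+\varepsilon\zeta}-1)/\varepsilon$ and $\psi\mapsto \mathrm{F}_\mu^{-1}[\mathrm{F}_\mu^2\partial_x\psi]$ used in \eqref{Diagonalized system introduction}. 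The rest of your decomposition of the $u^+$ equation and the $O(\mu)$ estimate on $(\mathrm{F}_\mu-\mathrm{Id})\partial_x u^+$ match what the paper uses implicitly.
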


\begin{mypp}\label{Corollary convergence introduction}
    In Corollary \ref{First corollary introduction}, if $u^-_e(0) = O(\mu)$, then one can replace $\zeta_{\rm{c}}$ and $\psi_{\rm{c}}$ by 
    \begin{align*}
        \zeta_{\Wh,+} = \frac{1}{\varepsilon}\Big[ (\frac{\varepsilon}{2}u^+ + 1)^2 - 1 \Big] \ \ \et \ \ \psi_{\Wh,+} = \int_0^x \mathrm{F}_{\mu}^{-1}[u^+] \dx,
    \end{align*}
    where $u^+ \in \mathcal{C}([0,\frac{T}{\varepsilon}],H^{\alpha}(\mathbb{R}))$ solves the exact Whitham equation
    \begin{align*}
        \partial_t u^+ + \mathrm{F}_{\mu}[\partial_x u^+] + \frac{3\varepsilon}{2}u^+\partial_x u^+ = 0,
    \end{align*}
    and get for all times $t \in [0,\frac{T}{\varepsilon}]$,
    \begin{align*}
        |(\zeta-\zeta_{\Wh,+}, \psi -\psi_{\Wh,+})|_{H^{\alpha}\times \dot{H}^{\alpha+1}} \leq C (|u_e^-(0)|_{H^{\alpha+1}} + \mu\varepsilon t)  ,
    \end{align*}
    where $C,T^{-1} = C(\frac{1}{h_{\min}}, \mu_{\max}, |\zeta_0|_{H^{\alpha+n}}, |\psi_0|_{\dot{H}^{\alpha+n+1}})$. 
    
    If instead $u^+_e(0) = O(\mu)$, then one can replace $\zeta_{\rm{c}}$ and $\psi_{\rm{c}}$ by 
    \begin{align*}
        \zeta_{\Wh,-} = \frac{1}{\varepsilon}\Big[ (\frac{\varepsilon}{2}u^- - 1)^2 - 1 \Big] \ \ \et \ \ \psi_{\Wh,-} = \int_0^x \mathrm{F}_{\mu}^{-1}[u^-] \dx,
    \end{align*}
    where $u^- \in \mathcal{C}([0,\frac{T}{\varepsilon}],H^{\alpha}(\mathbb{R}))$ solves the exact counter-propagating Whitham equation 
    \begin{align*}
        \partial_t u^- - \mathrm{F}_{\mu}[\partial_x u^-] + \frac{3\varepsilon}{2}u^-\partial_x u^- = 0,
    \end{align*}
    and get for all times $t \in [0,\frac{T}{\varepsilon}]$,
    \begin{align*}
        |(\zeta-\zeta_{\Wh,-}, \psi -\psi_{\Wh,-})|_{H^{\alpha}\times \dot{H}^{\alpha+1}} \leq C (|u_e^+(0)|_{H^{\alpha+1}} + \mu \varepsilon t),
    \end{align*}
    where $C,T^{-1} = C(\frac{1}{h_{\min}}, \mu_{\max}, |\zeta_0|_{H^{\alpha+n}}, |\psi_0|_{\dot{H}^{\alpha+n+1}})$. 
\end{mypp}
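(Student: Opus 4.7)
The plan is to combine the approximation result of Corollary \ref{First corollary introduction} with a stability argument that replaces the coupled system \eqref{exact diagoanlized system introduction} by the uncoupled Whitham equation when one of the Riemann-type quantities starts small. By the triangle inequality it suffices to bound $|(\zeta_{\rm{c}} - \zeta_{\Wh,+}, \psi_{\rm{c}} - \psi_{\Wh,+})|_{H^{\alpha}\times \dot{H}^{\alpha+1}}$ by $C(|u_e^-(0)|_{H^{\alpha+1}} + \mu\varepsilon t)$. Expanding the squares in \eqref{zetac and psic} and in the analogous formula defining $(\zeta_{\Wh,+}, \psi_{\Wh,+})$ yields the algebraic identity
\[
\zeta_{\rm{c}} - \zeta_{\Wh,+} = (u_e^+ - u^+ - u_e^-) + \tfrac{\varepsilon}{4}\bigl[(u_e^+ - u_e^-)^2 - (u^+)^2\bigr],
\]
together with the simpler relation $\psi_{\rm{c}} - \psi_{\Wh,+} = \int_0^x \mathrm{F}_\mu^{-1}[u_e^+ - u^+ + u_e^-]\,\dx$, so the task reduces to controlling $|u_e^-(t)|_{H^{\alpha+1}}$ and $|u_e^+(t) - u^+(t)|_{H^{\alpha+1}}$ on $[0, T/\varepsilon]$.

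For the former I would reproduce the propagation-of-smallness argument underlying Proposition \ref{First main result}, but applied directly to the exact diagonalized system \eqref{exact diagoanlized system introduction} (which carries no remainder term). An $H^{\alpha+1}$ energy estimate on the second equation, with transport velocity $\varepsilon(u_e^+ + 3u_e^-)/2 - 1$ whose derivative is controlled uniformly in $(\mu,\varepsilon)$ via the local existence estimates of Proposition \ref{local existence}, yields through Gronwall's lemma $|u_e^-(t)|_{H^{\alpha+1}} \leq C|u_e^-(0)|_{H^{\alpha+1}}$ on $[0, T/\varepsilon]$; under the assumption $u_e^-(0) = O(\mu)$ this in particular keeps $u_e^-(t) = O(\mu)$. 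Reinserting this bound in the first line of \eqref{exact diagoanlized system introduction} shows that $u_e^+$ solves the Whitham equation with a forcing term $\tfrac{\varepsilon}{2} u_e^- \mathrm{F}_\mu \partial_x u_e^+$ of $H^\alpha$-norm $O(\mu\varepsilon)$.

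Since $u^+$ solves the exact Whitham equation with initial datum $u_e^+(0)$, the difference $u_e^+ - u^+$ satisfies a perturbed quasilinear equation with source of size $O(\mu\varepsilon)$ in $H^\alpha$. A standard stability estimate for the Whitham equation, of the same flavor as the one underlying Proposition \ref{local existence}, then gives $|u_e^+(t) - u^+(t)|_{H^{\alpha+1}} \leq C\mu\varepsilon t$ on $[0, T/\varepsilon]$. Combining this with the bound on $u_e^-$, plugging into the algebraic reductions above (noting that $\mathrm{F}_\mu^{\pm 1}$ is bounded on the relevant Sobolev scales uniformly in $\mu \in [0,\mu_{\max}]$ modulo a finite derivative loss absorbed in $n$), and adding the estimate of Corollary \ref{First corollary introduction} for the gap between $(\zeta,\psi)$ and $(\zeta_{\rm{c}}, \psi_{\rm{c}})$, produces the announced inequality in the first case. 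The second case is strictly symmetric, obtained by the involution exchanging the two equations of \eqref{exact diagoanlized system introduction}.

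The main technical obstacle I expect is the stability estimate for the quasilinear Whitham equation over the long time scale $[0, T/\varepsilon]$: one has to keep both $u_e^+$ and $u^+$ in a common ball of the high-regularity space $H^{\alpha+n}$ throughout this interval, which requires a bootstrap tied to the local existence result, and one must carefully track commutators with the Fourier multiplier $\mathrm{F}_\mu$ to ensure the constants remain uniform in $(\mu,\varepsilon) \in \mathcal{A}$.
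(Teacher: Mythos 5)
Your argument follows essentially the same route as the paper: energy estimates to propagate the smallness of $u_e^-$, a stability estimate (Proposition \ref{Stability}) to bound $u_e^+-u^+$ by $C\mu\varepsilon t$, and the triangle inequality with Corollary \ref{First corollary introduction}; the explicit algebraic identity you write for $\zeta_{\rm{c}}-\zeta_{\Wh,+}$ is what the paper uses implicitly. The only cosmetic difference is that the paper rewrites $u^+$ as solving the diagonalized first equation up to an $O(\mu\varepsilon)$ remainder and invokes Proposition \ref{Stability} directly, whereas you rewrite $u_e^+$ as a forced Whitham equation — two ways of stating the same comparison (and note that the forcing also includes the commutator-type term $\tfrac{3\varepsilon}{2}u_e^+(\mathrm{F}_\mu-1)\partial_x u_e^+$, not only $\tfrac{\varepsilon}{2}u_e^-\mathrm{F}_\mu\partial_x u_e^+$, though both are $O(\mu\varepsilon)$).
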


The second method is based on a generalisation of Birkhoff's normal form algorithm developed by Bambusi in \cite{Bambusi20}. Its application to our case leads us to define some operator and transformations which we need to write our main results.

\begin{mydef}\label{Definition transformations}
    \begin{itemize}
        \item Let $\alpha \geq 0$. We define $\mathcal{T}_{\rm{I}}: H^{\alpha}(\mathbb{R})\times H^{\alpha}(\mathbb{R}) \to H^{\alpha}(\mathbb{R})\times \dot{H}^{\alpha+1}(\mathbb{R})$ by the formula
        \begin{align*}
            \mathcal{T}_{\rm{I}}(\zeta,v) = \begin{pmatrix} \zeta \\ \int_0^x v(y) \dy  \end{pmatrix}.
        \end{align*}
        \item Let $\alpha \geq 0$. We define $\mathcal{T}_{\rm{D}}: H^{\alpha+1}(\mathbb{R})\times H^{\alpha+1}(\mathbb{R}) \to H^{\alpha}(\mathbb{R}) \times H^{\alpha}(\mathbb{R})$ by the formula 
            \begin{align*}
                \mathcal{T}_{\rm{D}}(r,s) = \begin{pmatrix} r+s \\ \mathrm{F}_{\mu}^{-1}[r-s] \end{pmatrix}.
            \end{align*}
        \item We define $\partial^{-1}: L^1(\mathbb{R}) \to L^{\infty}(\mathbb{R})$ by the formula
            \begin{align*}
                \partial^{-1} u(y) = \frac{1}{2} \int_{\mathbb{R}} \sgn(y-y_1) u(y_1) \dy_1.
            \end{align*}
        \item Let $\alpha \geq 0$ be a positive integer. We define $ \mathcal{T}_{\rm{B}}: W^{\alpha+1,1}(\mathbb{R})\times W^{\alpha+1,1}(\mathbb{R}) \to W^{\alpha,1}(\mathbb{R})\times W^{\alpha,1}(\mathbb{R})$ by the formula
            \begin{align*}
                \mathcal{T}_{\rm{B}}(r,s) = \begin{pmatrix} r + \frac{\varepsilon}{4} \partial_x (r) \partial^{-1}(s) + \frac{\varepsilon}{4} rs + \frac{\varepsilon}{8}s^2 \\ s + \frac{\varepsilon}{4} \partial_x (s) \partial^{-1}(r) + \frac{\varepsilon}{4} rs + \frac{\varepsilon}{8}r^2 \end{pmatrix}.
            \end{align*} 
    \end{itemize}
\end{mydef}

\begin{mythm}\label{Final theorem introduction}
    Let $\alpha \geq 0$ be a positive integer and $(\mu,\varepsilon) \in \mathcal{A}$. Let $r,s$ $\in \mathcal{C}^1([0,\frac{T}{\varepsilon}], W^{\alpha+7,1}(\mathbb{R}))$ be solutions of the Whitham equations
    \begin{align}\label{Decoupled Whitham equations introduction}
        \begin{cases}
            \partial_t r + \mathrm{F}_{\mu}[\partial_x r] + \frac{3\varepsilon}{2} r\partial_x r = 0, \\
            \partial_t s -\mathrm{F}_{\mu}[\partial_x s] - \frac{3\varepsilon}{2} s \partial_x s = 0.
        \end{cases}
    \end{align}
    Let also $H_{\WW}$ be the Hamiltonian of the water waves equations \eqref{Hamiltonian Water Waves equations} and $J = \begin{pmatrix} 0 & 1 \\ -1 & 0 \end{pmatrix}$ the Poisson tensor associated with. Then the quantities 
    \begin{align}\label{zeta and partial psi introduction}
        \begin{pmatrix} \zeta_{\Wh} \\ \psi_{\Wh} \end{pmatrix} \colonequals \mathcal{T}_{\rm{I}}(\mathcal{T}_{\rm{D}}(\mathcal{T}_{\rm{B}}(r,s)),
    \end{align}
    satisfy
    \begin{align*}
        \partial_t \begin{pmatrix} \zeta_{\Wh} \\ \psi_{\Wh} \end{pmatrix} = J \nabla (H_{\WW}) (\zeta_{\Wh},\psi_{\Wh}) + (\mu\varepsilon + \varepsilon^2) R, \ \ \forall t \in [0,\frac{T}{\varepsilon}], 
    \end{align*}
    where $|R|_{H^{\alpha}\times \dot{H}^{\alpha+1}} \leq C(\mu_{\max},|r|_{W^{\alpha+7,1}},|s|_{W^{\alpha+7,1}})$.
\end{mythm}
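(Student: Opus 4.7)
The strategy is to work at the Hamiltonian level and to recognise $\mathcal{T}_{\rm{I}}\circ\mathcal{T}_{\rm{D}}\circ\mathcal{T}_{\rm{B}}$ as the composition of three canonical changes of variables that bring the water waves Hamiltonian $H_{\WW}$ into Birkhoff normal form, in the sense of Bambusi, up to errors of order $\mu\varepsilon+\varepsilon^{2}$. First I would use the shallow water expansion of the Dirichlet--Neumann operator
\begin{align*}
\tfrac{1}{\mu}\mathcal{G}^{\mu}[\varepsilon\zeta]\psi = -\partial_x\mathrm{F}_{\mu}^{2}\partial_x\psi - \varepsilon\,\partial_x(\zeta\,\partial_x\psi) + O(\mu\varepsilon^{2} + \varepsilon^{3})
\end{align*}
to write $H_{\WW} = H_2 + \varepsilon H_3 + O(\mu\varepsilon^{2}+\varepsilon^{3})$, where $H_2 = \tfrac{1}{2}\int\zeta^{2} + \tfrac{1}{2}\int(\mathrm{F}_{\mu}\partial_x\psi)^{2}$ is the quadratic form controlling the linear dynamics and $H_3$ is a cubic form involving $\zeta(\partial_x\psi)^{2}$. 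This is the step where the order-$\mu\varepsilon$ contribution to the final error appears.

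Next I would analyse the three transformations one at a time. The map $\mathcal{T}_{\rm{I}}^{-1}$ simply relabels the momentum as $v = \partial_x\psi$. The map $\mathcal{T}_{\rm{D}}^{-1}$ diagonalises $H_2$ in the variables $(r,s)$; in these coordinates the Hamilton equations for $H_2$ reduce to the decoupled transport equations $\partial_t r + \mathrm{F}_{\mu}\partial_x r = 0$ and $\partial_t s - \mathrm{F}_{\mu}\partial_x s = 0$, which are the linear parts of \eqref{Decoupled Whitham equations introduction}. Rewriting $H_3$ in $(r,s)$ produces a sum of resonant monomials (depending only on $r$ or only on $s$), which generate precisely the Burgers-type nonlinearities $\tfrac{3\varepsilon}{2}r\partial_x r$ and $-\tfrac{3\varepsilon}{2}s\partial_x s$, plus non-resonant mixed monomials of the form $r^{2}s$, $rs^{2}$ with nonlocal coefficients. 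The transformation $\mathcal{T}_{\rm{B}}$ then plays the role of the Birkhoff step: it is a near-identity map $(r,s)\mapsto(r,s)+\varepsilon\chi(r,s)$ whose associated cubic flow cancels the mixed part of $H_3$ at the price of quartic, i.e.\ $O(\varepsilon^{2})$, remainders. I would verify by direct expansion that the explicit polynomial $\mathcal{T}_{\rm{B}}$ given in Definition \ref{Definition transformations} is precisely such a transformation: its quadratic correction is tailor-made so that the Poisson bracket $\{\chi,H_2\}$ offsets the mixed cubic part of $H_3$.

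To conclude in the stated direction, I would take $(r,s)$ solving \eqref{Decoupled Whitham equations introduction} and differentiate $(\zeta_{\Wh},\psi_{\Wh}) = \mathcal{T}_{\rm{I}}\circ\mathcal{T}_{\rm{D}}\circ\mathcal{T}_{\rm{B}}(r,s)$ via the chain rule. Using the linearity of $\mathcal{T}_{\rm{I}}$ and $\mathcal{T}_{\rm{D}}$ together with the cocycle identity satisfied by $\mathcal{T}_{\rm{B}}$, the quantity $\partial_t(\zeta_{\Wh},\psi_{\Wh}) - J\nabla H_{\WW}(\zeta_{\Wh},\psi_{\Wh})$ reduces to exactly the two error sources already identified: the $O(\mu\varepsilon)$ remainder from the truncation of the Dirichlet--Neumann expansion, and the $O(\varepsilon^{2})$ quartic remainder produced by the Birkhoff transformation. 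The hard part is to estimate both remainders in a norm compatible with the nonlocal antiderivative $\partial^{-1}$ appearing in $\mathcal{T}_{\rm{B}}$: since $\partial^{-1}$ is bounded from $L^{1}$ to $L^{\infty}$ but not on $L^{2}$, one is forced to work in $W^{\alpha,1}$-based spaces, which is exactly what Bambusi's almost-smooth normal form machinery from \cite{Bambusi20} is designed to accommodate. The $+7$ loss of derivatives in the hypothesis $r,s\in W^{\alpha+7,1}$ is a cumulative bookkeeping over the orders of $\mathrm{F}_{\mu}$, of the Dirichlet--Neumann remainder, and of the commutators produced when conjugating $H_{\WW}$ by the three successive transformations; it can be tracked explicitly but is the most delicate technical point of the argument.
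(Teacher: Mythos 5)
Your overall decomposition is the same as the paper's: expand the Dirichlet--Neumann operator to approximate $H_{\WW}$ by a Whitham--Boussinesq Hamiltonian, change variables via $\mathcal{T}_{\rm{I}}^{-1}$ and $\mathcal{T}_{\rm{D}}^{-1}$ to split the quadratic part, and then use $\mathcal{T}_{\rm{B}}$ as a near-identity Birkhoff step to eliminate the mixed cubic. But there is a conceptual gap in how you account for the error budget of the Birkhoff step. You attribute only an $O(\varepsilon^2)$ quartic remainder to it, as if $\mathcal{T}_{\rm{B}}$ could be taken to solve the homological equation $\{L,G\}_{\mu}+W=0$ exactly for the \emph{true} Poisson tensor $J_{\mu}$. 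This cannot be done: Bambusi's formula $G=-\frac{1}{2}\int_{\mathbb{R}}\sgn(\tau)\,W(\Phi^{\tau}_{L})\,d\tau$ requires integrable time decay of $W$ along the linear flow, but the group $e^{\pm itD\mathrm{F}_{\mu}}$ only decays like $t^{-1/2}$ in $L^{\infty}$, which is not enough. The way out is to solve instead the \emph{simplified} homological equation $\{L,G\}_{\simp}+W=0$ with $J_{\simp}=\mathrm{diag}(-\partial_x/2,\partial_x/2)$, for which $G$ can be written in closed form using $\partial^{-1}$; since $J_{\mu}-J_{\simp}=O(\mu)$, this substitution introduces a second, independent $O(\mu\varepsilon)$ error at the level of $\{L,G\}_{\mu}$. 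That error is an intrinsic part of the $\mu\varepsilon+\varepsilon^2$ remainder in the theorem, and it is invisible in your accounting. Relatedly, one must also verify (as the paper does via the explicit Jacobian of $\mathcal{T}_{\rm{B}}$) that $\mathcal{T}_{\rm{B}}$ preserves $J_{\mu}$ only up to $O(\mu\varepsilon+\varepsilon^{2})$; the ``cocycle identity'' you invoke is not the relevant structure.

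There is also an internal inconsistency in your first step. You display the operator expansion
\begin{align*}
\tfrac{1}{\mu}\mathcal{G}^{\mu}[\varepsilon\zeta]\psi = -\partial_x\mathrm{F}_{\mu}^{2}\partial_x\psi - \varepsilon\,\partial_x(\zeta\,\partial_x\psi) + O(\mu\varepsilon^{2} + \varepsilon^{3}),
\end{align*}
but then say this step is ``where the order-$\mu\varepsilon$ contribution to the final error appears.'' Only the second statement is correct: the truncated shallow-water expansion $-\partial_x(h\mathrm{F}_{\mu}^{2}\partial_x\psi)$ carries an $O(\mu\varepsilon)$ remainder (and replacing $\mathrm{F}_{\mu}^{2}\partial_x\psi$ by $\partial_x\psi$ in the nonlinear part introduces another $O(\mu\varepsilon)$ commutator). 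If the stated $O(\mu\varepsilon^{2}+\varepsilon^{3})$ precision were available, the final estimate would be $O(\varepsilon^{2})$, which is strictly better than the theorem actually proves. Once you correct this and add the missing $O(\mu\varepsilon)$ contribution from the simplified-Poisson-tensor substitution, the two sources align with the paper's $\mu\varepsilon+\varepsilon^{2}$ remainder and the argument goes through.
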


\begin{myrem}
    \begin{itemize}
        \item The existence of $r,s \in \mathcal{C}^1([0,\frac{T}{\varepsilon}], W^{\alpha+6,1}(\mathbb{R}))$ solutions of system \eqref{Decoupled Whitham equations introduction} is given by Lemma \ref{Solutions of the Whitham equations in W alpha,1}.
        \item The remainder $R$ is uniformly bounded in $(\mu,\varepsilon)$ for times $t \in [0,\frac{T}{\max{(\mu,\varepsilon)}}]$ in $H^{\alpha}(\mathbb{R})\times \dot{H}^{\alpha+1}(\mathbb{R})$. The particular time interval $[0,\frac{T}{\max{(\mu,\varepsilon)}}]$ comes from the estimates of $r$ and $s$ in $W^{\alpha+7,1}(\mathbb{R})$ given by Lemma \ref{Solutions of the Whitham equations in W alpha,1}. Indeed, due to the dispersive effects one has a loss of decrease of order $O((1+\mu t)^{\theta})$ for any $\theta >1/2$, in the Sobolev spaces $W^{\beta,1}(\mathbb{R})$ where $\beta \geq 0$.
        \item The second equation of \eqref{Decoupled Whitham equations introduction} is different from \eqref{left front Whitham equation}. One has to replace the definition of $u^-$ in Proposition \ref{Diagonalization of the WB equations} by $u^- = \frac{\sqrt{h}-1}{\varepsilon} - \frac{1}{2}\mathrm{F}_{\mu}^{-1}[v]$ to get 
        \begin{align*}
            \partial_t u^- - \mathrm{F}_{\mu}[\partial_x u^-] - \frac{3\varepsilon}{2}u^-\partial_x u^- = \mu\varepsilon R,
        \end{align*}
        when setting $u^+(0) = O(\mu)$.
        \item In Theorem \ref{Final theorem introduction} we express the loss of regularity needed to pass rigorously from the Whitham equations \eqref{Decoupled Whitham equations introduction} to the water waves equations \eqref{WaterWavesEquations}. 
    \end{itemize}
\end{myrem}

Using again Theorem 4.16 and Theorem 4.18 of \cite{WWP}, we get the following corollary.

\begin{mycoro}\label{Final corollary}
    There exists $n \in \mathbb{N}^*$ such that for any $\alpha \geq 0$ and any $(\mu,\varepsilon) \in \mathcal{A}$, we have what follows. Consider the Cauchy problem for the water wave problem \eqref{WaterWavesEquations} with initial conditions $(\zeta_0,\psi_0) \in W^{\alpha+n,1}(\mathbb{R})\times \dot{W}^{\alpha+n+1,1}(\mathbb{R})$ satisfying the non-cavitation assumption \eqref{Non-Cavitation Hypothesis}, 
    and denote by $(\zeta, \psi)$ the corresponding solution.
    There exists a solution $(r,s)$ of the Hamiltonian system \eqref{Decoupled Whitham equations introduction} which satisfies the following property: denote by $(\zeta_{\Wh},\psi_{\Wh})$ the solution defined by \eqref{zeta and partial psi introduction}, then for all times $t \in [0,\frac{T}{\max{(\mu,\varepsilon)}}]$, one has
    \begin{align*}
        |(\zeta-\zeta_{\Wh},\psi-\psi_{\Wh})|_{H^{\alpha}\times \dot{H}^{\alpha+1}} \leq C(\varepsilon^2+(\mu\varepsilon + \varepsilon^2)t),
    \end{align*}
    where $C,T^{-1} = C(\frac{1}{h_{\min}},\mu_{\max},|\zeta_0|_{W^{\alpha+n}},|\psi_0|_{\dot{W}^{\alpha+n+1}})$. See \eqref{r0 and s0} for the initial conditions $(r_0, s_0)$ associated with $(r,s)$.
\end{mycoro}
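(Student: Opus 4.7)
The plan is to combine the consistency result of Theorem \ref{Final theorem introduction} with the well-posedness and Lipschitz stability theory for the water waves equations provided by Theorems 4.16 and 4.18 of \cite{WWP}. The strategy is standard: construct an approximate solution via Theorem \ref{Final theorem introduction}, measure its initial distance to $(\zeta_0,\psi_0)$, and propagate the total error in time using the stability estimate for the water waves flow. The two sources of error add up exactly to $\varepsilon^2 + (\mu\varepsilon+\varepsilon^2)t$: the $\varepsilon^2$ term comes from the mismatch at $t=0$ between the true initial data and the initial value of the approximation, while the $(\mu\varepsilon+\varepsilon^2)t$ term comes from integrating in time the consistency error of Theorem \ref{Final theorem introduction}.

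The first step is to choose the initial data $(r_0,s_0)$ for the decoupled Whitham system \eqref{Decoupled Whitham equations introduction}. Since $\mathcal{T}_{\rm{I}}$ and $\mathcal{T}_{\rm{D}}$ are explicit linear isomorphisms and $\mathcal{T}_{\rm{B}}=\mathrm{id}+\varepsilon \mathcal{N}$ is a near identity map with smooth nonlinear part $\mathcal{N}$, I would define $(r_0,s_0)$ by a one step formal inversion: take $(\widetilde r_0,\widetilde s_0)\colonequals \mathcal{T}_{\rm{D}}^{-1}(\zeta_0,\partial_x\psi_0)$ and set $(r_0,s_0)\colonequals (\widetilde r_0,\widetilde s_0)-\varepsilon\mathcal{N}(\widetilde r_0,\widetilde s_0)$; this is the formula to be recorded as \eqref{r0 and s0}. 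A direct expansion then yields $\mathcal{T}_{\rm{I}}(\mathcal{T}_{\rm{D}}(\mathcal{T}_{\rm{B}}(r_0,s_0)))-(\zeta_0,\psi_0)=O(\varepsilon^2)$ in $H^{\alpha}\times\dot H^{\alpha+1}$, provided the initial data are taken in a space with enough regularity (this dictates the integer $n$). I would then invoke the existence lemma (Lemma \ref{Solutions of the Whitham equations in W alpha,1}) to obtain $r,s\in\mathcal{C}^1([0,T/\varepsilon],W^{\alpha+7,1}(\mathbb{R}))$, with uniform bounds on the relevant time interval $[0,T/\max(\mu,\varepsilon)]$ controlled by $|\zeta_0|_{W^{\alpha+n,1}}+|\psi_0|_{\dot W^{\alpha+n+1,1}}$.

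Second, I would apply Theorem \ref{Final theorem introduction} to the pair $(r,s)$ just constructed, which yields that $(\zeta_{\Wh},\psi_{\Wh})$ defined by \eqref{zeta and partial psi introduction} solves the Hamiltonian water waves system up to a source term of size $(\mu\varepsilon+\varepsilon^2)$ in $H^{\alpha}\times\dot H^{\alpha+1}$, uniformly on $[0,T/\max(\mu,\varepsilon)]$. At this point, the genuine water waves solution $(\zeta,\psi)$ exists on the same time scale thanks to Theorem 4.16 of \cite{WWP} (Rayleigh--Taylor being automatic for a flat bottom), and the Lipschitz stability statement of Theorem 4.18 of \cite{WWP} lets me compare $(\zeta,\psi)$ with the approximate solution $(\zeta_{\Wh},\psi_{\Wh})$: the energy norm of the difference at time $t$ is bounded by the norm of the difference at time $0$ plus the time integral of the consistency error. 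Putting together the $O(\varepsilon^2)$ initial mismatch and the $O(\mu\varepsilon+\varepsilon^2)$ consistency error yields exactly the bound claimed in the corollary.

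The main obstacle is bookkeeping of regularity: Theorem \ref{Final theorem introduction} requires seven derivatives in $W^{\alpha,1}$ for the Whitham data, while the stability estimate of \cite{WWP} is phrased in Sobolev $H^{\alpha}$ spaces. Translating between the two forces the use of the embedding $W^{\beta,1}(\mathbb{R})\hookrightarrow H^{\beta-1/2}(\mathbb{R})$, and a careful check that the lifespan furnished by Lemma \ref{Solutions of the Whitham equations in W alpha,1}, which degrades like $(1+\mu t)^{\theta}$ in the $W^{\cdot,1}$ scale, remains compatible with the time window $[0,T/\max(\mu,\varepsilon)]$. One must also verify that the constants produced along the chain of estimates depend only on $h_{\min}^{-1}$, $\mu_{\max}$ and on the $W^{\alpha+n,1}$ norms of the initial data, so that $n$ can be fixed once and for all independently of $(\mu,\varepsilon)\in\mathcal{A}$; this absorbs the technical content of the argument but requires no new idea beyond those already established.
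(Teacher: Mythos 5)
Your proposal is correct and follows essentially the same route as the paper: choose $(r_0,s_0)$ by the one-step approximate inverse $\mathcal{T}_{\rm{B}}^{\inv}=\mathrm{Id}-\varepsilon\widetilde{\mathcal{T}_{\rm{B}}}$ applied to $\mathcal{T}_{\rm{D}}^{-1}(\mathcal{T}_{\rm{I}}^{-1}(\zeta_0,\psi_0))$, giving the $O(\varepsilon^2)$ initial mismatch, then combine the consistency error of Theorem \ref{Final theorem introduction} with Theorems 4.16 and 4.18 of \cite{WWP} for well-posedness and stability. The paper's written proof only spells out the construction of $(r_0,s_0)$ and cites \cite{WWP} for the rest, which is exactly the content of your steps two through five.
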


\subsection{Outline}
In section \ref{Derivation of the Whitham equations from the Riemann invariants of a Whitham-Boussinesq system} we prove Proposition \ref{Diagonalization of the WB equations} and Proposition \ref{First main result}. In subsection \ref{Formal diagonalization of the Whitham-Boussinesq equation} we focus on the proof of Proposition \ref{Diagonalization of the WB equations} using symbolic calculus to diagonalize system \eqref{Whitham-Boussinesq system introduction} (see remark \ref{Remark Riemann invariants}). In subsection \ref{From the diagonalized Whitham-Boussinesq system to the Whitham equations} we prove Proposition \ref{First main result} using a local well-posedness result (see Proposition \ref{local existence}) and a stability result (see Proposition \ref{Stability}) on the diagonalized system \eqref{exact diagoanlized system introduction}. Then we prove Proposition \ref{Corollary convergence introduction}.

The section \ref{Decoupling the water waves equations into two Whitham equations} is dedicated to the proof of Theorem \ref{Final theorem introduction}. In subsection \ref{From the water waves equations to a Whitham-Boussinesq system} we do a first approximation of the water waves' Hamiltonian by the one of a specific Whitham-Boussinesq similar to \eqref{Whitham-Boussinesq system introduction} (see Proposition \ref{From HBX to HWW}). Then we do a simple change of unknowns separating the waves into a right and left front (see Proposition \ref{Transformation Diag}). In subsection \ref{Application of Birkhoff's algorithm} we apply the generalised Birkhoff's algorithm for almost smooth functions to the Hamiltonian obtained in the previous subsection. It gives an explicit transformation \eqref{explicit formulation of T} allowing to pass from the latter Hamiltonian to the one of a system composed of two decoupled Whitham equations (see Property \ref{property hamilton's equations HWh} and remark \ref{Remark two decoupled Whitham equations}) at the desired order of precision. In subsection \ref{From the Hamiltonian of the Whitham-Boussinesq system under normal form to two decoupled Whitham equations} we prove the existence of solutions of the two decoupled Whitham equations in the suitable Sobolev spaces $W^{\alpha,1}(\mathbb{R})$ for every integer $\alpha \geq 0$ (see lemma \ref{Solutions of the Whitham equations in W alpha,1}). Then we prove an intermediate theorem (see Theorem \ref{From HWh to HBW}) which describes the action of the transformation resulting from the generalised Birkhoff's algorithm on the two decoupled Whitham equations. At the end, we prove Theorem \ref{Final theorem introduction} and corollary \ref{Final corollary}. 

\section{Derivation of the Whitham equations from the Riemann invariants of a Whitham-Boussinesq system}
\label{Derivation of the Whitham equations from the Riemann invariants of a Whitham-Boussinesq system}
The goal of this section is to prove the Propositions \ref{Diagonalization of the WB equations} and \ref{First main result}.

We consider the following Whitham-Boussinesq equations. 
\begin{align}\label{Whitham-Boussinesq system}
    \begin{cases}
        \partial_t \zeta + \partial_x v + \varepsilon \partial_x(\zeta) v + \varepsilon \zeta \partial_x v = 0, \\
        \partial_t v + \mathrm{F}_{\mu}^2[\partial_x \zeta] + \varepsilon v \partial_x v = 0,
    \end{cases}
\end{align}
where $v = \mathrm{F}_{\mu}^2[\partial_x \psi] = \frac{\tanh{(\sqrt{\mu}|D|)}}{\sqrt{\mu}|D|}[\partial_x \psi]$.
Using Proposition 1.15 of \cite{Emerald2020} we easily get the following proposition.

\begin{mypp}\label{Consistency of the WB equations}
Let $\mu_{\max} > 0$. There exists $n \in \mathbb{N}^*$ and $T > 0$ such that for all $\alpha \geq 0$ and $p = (\mu,\varepsilon) \in \mathcal{A}$ (see Definition \ref{shallow water regime}), and for every solution $(\zeta,\psi) \in C([0,\frac{T}{\varepsilon}];H^{\alpha+n}(\mathbb{R})\times \dot{H}^{\alpha+n+1}(\mathbb{R}))$ to the water waves equations \eqref{WaterWavesEquations} one has 
\begin{align}
    \begin{cases}
        \partial_t \zeta + \partial_x v + \varepsilon \partial_x(\zeta v) = \mu \varepsilon R_1, \\
        \partial_t v + \mathrm{F}_{\mu}^2[\partial_x \zeta] + \varepsilon v \partial_x v = \mu\varepsilon R_2,
    \end{cases}
\end{align}
with $|R_1|_{H^{\alpha}},|R_2|_{H^{\alpha}} \leq C(\frac{1}{h_{\min}}, \mu_{\max}, |\zeta|_{H^{\alpha+n}},|\psi|_{\dot{H}^{\alpha+n+1}})$.

We say that the water waves equations are consistent with the Whitham-Boussinesq equations \eqref{Whitham-Boussinesq system} at precision order $O(\mu\varepsilon)$. 
\end{mypp}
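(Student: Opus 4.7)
The plan is to read off both equations from the known asymptotic expansion of the Dirichlet--Neumann operator in the shallow water regime, which is precisely the content of Proposition~1.15 of \cite{Emerald2020}. That proposition already supplies, to precision $O(\mu\varepsilon)$ in the required Sobolev norms, the Whitham--Boussinesq-type approximation
\begin{align*}
    \frac{1}{\mu}\mathcal{G}^{\mu}[\varepsilon\zeta]\psi = -\partial_x v - \varepsilon\partial_x(\zeta v) + \mu\varepsilon \widetilde{R},
\end{align*}
where $v = \mathrm{F}_\mu^2[\partial_x \psi]$, with $|\widetilde{R}|_{H^\alpha}$ controlled by a function of $\tfrac{1}{h_{\min}}, \mu_{\max}, |\zeta|_{H^{\alpha+n}}, |\psi|_{\dot{H}^{\alpha+n+1}}$ for $n$ large enough. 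The first equation of the water waves system $\partial_t \zeta = \tfrac{1}{\mu}\mathcal{G}^{\mu}[\varepsilon\zeta]\psi$ then immediately yields
\begin{align*}
    \partial_t \zeta + \partial_x v + \varepsilon\partial_x(\zeta v) = \mu\varepsilon R_1,
\end{align*}
with $R_1 = -\widetilde{R}$, giving the first equation.

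For the second equation I would apply $\mathrm{F}_\mu^2 \partial_x$ to the Bernoulli equation, using that $\mathrm{F}_\mu^2$ is a bounded Fourier multiplier of order zero, uniformly in $\mu \in [0,\mu_{\max}]$. The fraction involving the Dirichlet--Neumann operator already comes with a prefactor $\mu\varepsilon$; combining standard tame estimates on $\mathcal{G}^{\mu}$ (e.g.\ from \cite{WWP}) with the non-cavitation assumption \eqref{Non-Cavitation Hypothesis} to bound the denominator away from zero, the whole fraction is uniformly bounded in $|\cdot|_{H^{\alpha+1}}$ by a constant depending on $\tfrac{1}{h_{\min}}, \mu_{\max}, |\zeta|_{H^{\alpha+n}}, |\psi|_{\dot{H}^{\alpha+n+1}}$, and thus contributes to $R_2$. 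This reduces the matter to showing
\begin{align*}
    \tfrac{\varepsilon}{2}\mathrm{F}_\mu^2[\partial_x((\partial_x\psi)^2)] = \varepsilon v \partial_x v + \mu\varepsilon R_2',
\end{align*}
which I would obtain by writing $\partial_x \psi = v + (1-\mathrm{F}_\mu^{-2})v$ and using that $\mathrm{F}_\mu^2 - 1$ and $1 - \mathrm{F}_\mu^{-2}$ are both of order $O(\mu)$ as operators from $H^{\beta+2}$ to $H^\beta$ (immediate from $\tanh(\xi)/\xi = 1 + O(\xi^2)$ and its reciprocal). Combining these two expansions via Moser-type product estimates in $H^\alpha$ finishes the derivation of the second equation.

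The only non-trivial analytic point is the asymptotic expansion of the Dirichlet--Neumann operator stated above, which requires an analysis of the underlying elliptic problem and a remainder estimate uniform in $\mu$; but this is exactly what Proposition~1.15 of \cite{Emerald2020} provides, so in the end the proof is a matter of substituting the two expansions into \eqref{WaterWavesEquations} and collecting terms of order $O(\mu\varepsilon)$ into the remainders $R_1, R_2$, both bounded in $|\cdot|_{H^\alpha}$ by a constant of the announced form. No delicate low-regularity argument is needed, which is why the author writes ``we easily get''.
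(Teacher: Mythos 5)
Your proposal is correct and takes essentially the same route as the paper, which simply invokes Proposition~1.15 of \cite{Emerald2020} (the shallow-water Dirichlet--Neumann expansion / consistency result) and ``easily'' collects the terms; you have just written out the straightforward details that the one-line proof elides. One immaterial slip: with $v=\mathrm{F}_\mu^2[\partial_x\psi]$ one has $\partial_x\psi = v + (\mathrm{F}_\mu^{-2}-1)v$, not $v+(1-\mathrm{F}_\mu^{-2})v$; this sign typo does not affect the $O(\mu)$ bookkeeping or the conclusion.
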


\begin{myrem}
    We only chose a Whitham-Boussinesq system fitted for the computations. There exists a whole class of these systems. For example, one can add a regularizing Fourier multiplier such as $\mathrm{F}_{\mu}^2$ on the nonlinear terms of system \eqref{Whitham-Boussinesq system} and still be precise at order $O(\mu\varepsilon)$. The system thus obtained is 
    \begin{align}\label{WB system Dinvay}
        \begin{cases}
            \partial_t \zeta + \partial_x v + \varepsilon \mathrm{F}_{\mu}^2\partial_x[\zeta v] = 0, \\
            \partial_t v + \mathrm{F}_{\mu}^2[\partial_x \zeta] + \frac{\varepsilon}{2}\mathrm{F}_{\mu}^2[v \partial_x v] = 0.
    \end{cases}
    \end{align}
    In \cite{DinvaySelbergTesfahun19} the authors proved the local well-posedness of \eqref{WB system Dinvay}. The reasoning of this section is the same for any Whitham-Boussinesq system such as the latter.
\end{myrem}

The system \eqref{Whitham-Boussinesq system} is the starting point of our reasoning. We will adapt the method used to derive the inviscid Burgers equations from the Nonlinear Shallow Water equations which use the Riemann invariants of the latter. 

\indent 

Throughout this section we will use Notation \ref{Big O Sobolev spaces}.

\subsection{Diagonalization of the Whitham-Boussinesq equations}
In this subsection we formally diagonalize the system \eqref{Whitham-Boussinesq system} to get Proposition \ref{Diagonalization of the WB equations}. Because of the consistency of the water waves equations with the system \eqref{Whitham-Boussinesq system}, we can take $\zeta$ and $v$ solutions of the latter instead of taking solutions of the water waves equations. For our purpose, we just need to prove the following proposition. 

\begin{mypp}\label{Diag WB section 2}
    Let $\mu_{\max} > 0$, there exists $n \in \mathbb{N}$ such that for any $\alpha \geq 0$ and $(\zeta,v) \in \mathcal{C}([0,\frac{T}{\varepsilon}];H^{\alpha+n}(\mathbb{R})\times H^{\alpha+n}(\mathbb{R}))$ solutions of system \eqref{Whitham-Boussinesq system} satisfying the non-cavitation hypothesis \eqref{Non-Cavitation Hypothesis}, the quantities 
    \begin{align}\label{Riemann invariants}
        u^+ = 2\frac{\sqrt{h}-1}{\varepsilon} + \mathrm{F}_{\mu}^{-1}[v], \ \ u^- = -2\frac{\sqrt{h}-1}{\varepsilon} + \mathrm{F}_{\mu}^{-1}[v],
    \end{align}
    where $h = 1 +\varepsilon \zeta$, satisfy
    \begin{align}\label{Almost diagonalized WB system}
        \begin{cases}
            \partial_t u^+ + (\varepsilon\frac{3 u^+ + u^-}{4}+1) \mathrm{F}_{\mu}\partial_x u^+ = O(\mu\varepsilon), \\
            \partial_t u^- + (\varepsilon\frac{u^+ + 3u^-}{4}-1) \mathrm{F}_{\mu}\partial_x u^- = O(\mu\varepsilon).
        \end{cases}
    \end{align}
    where $\mathrm{F}_{\mu} = \sqrt{\frac{\tanh{(\sqrt{\mu}|D|)}}{\sqrt{\mu}|D|}}$. 
\end{mypp}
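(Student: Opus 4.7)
My plan is to adapt the classical Riemann-invariants derivation from the Nonlinear Shallow Water system, replacing the exact cancellations available there by commutator remainders of order $O(\mu\varepsilon)$ produced by the Fourier multiplier $\mathrm{F}_\mu$. The starting point is to recast the first equation of \eqref{Whitham-Boussinesq system} in conservative form $\partial_t h + \varepsilon \partial_x(hv) = 0$ with $h = 1+\varepsilon\zeta$, then divide by $2\sqrt{h}$ to obtain $\partial_t \sqrt{h} + \varepsilon v\partial_x\sqrt{h} + \tfrac{\varepsilon}{2}\sqrt{h}\partial_x v = 0$. Applying $\mathrm{F}_\mu^{-1}$ to the second equation of \eqref{Whitham-Boussinesq system} and using $\mathrm{F}_\mu^{-1}\circ \mathrm{F}_\mu^2 = \mathrm{F}_\mu$ gives $\partial_t \mathrm{F}_\mu^{-1}[v] + \mathrm{F}_\mu[\partial_x\zeta] + \varepsilon \mathrm{F}_\mu^{-1}[v\partial_x v] = 0$.

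Next, I would form $\partial_t u^\pm = \pm (2/\varepsilon)\partial_t\sqrt{h} + \mathrm{F}_\mu^{-1}[\partial_t v]$ and invoke the algebraic identities $\sqrt{h} = 1 + \varepsilon(u^+-u^-)/4$ and $\mathrm{F}_\mu^{-1}[v] = (u^++u^-)/2$ to identify the coefficient $\varepsilon(3u^\pm + u^\mp)/4 \pm 1 = \pm \sqrt{h} + \varepsilon \mathrm{F}_\mu^{-1}[v]$. Combined with $\partial_x\zeta = 2\sqrt{h}\partial_x\sqrt{h}/\varepsilon$ and $\mathrm{F}_\mu\partial_x u^\pm = \pm (2/\varepsilon)\mathrm{F}_\mu[\partial_x\sqrt{h}] + \partial_x v$, a direct algebraic manipulation in which the $\sqrt{h}\partial_x v$ terms cancel exactly yields
\begin{align*}
\partial_t u^\pm + \Bigl(\varepsilon\tfrac{3u^\pm + u^\mp}{4} \pm 1\Bigr)\mathrm{F}_\mu\partial_x u^\pm = R_1 \pm R_2 + R_3,
\end{align*}
with the three residuals
\begin{align*}
R_1 &= \tfrac{2}{\varepsilon}\bigl(\sqrt{h}\,\mathrm{F}_\mu[\partial_x\sqrt{h}] - \mathrm{F}_\mu[\sqrt{h}\,\partial_x\sqrt{h}]\bigr),\\
R_2 &= 2\bigl(\mathrm{F}_\mu^{-1}[v]\,\mathrm{F}_\mu[\partial_x\sqrt{h}] - v\,\partial_x\sqrt{h}\bigr),\\
R_3 &= \varepsilon\bigl(\mathrm{F}_\mu^{-1}[v]\,\partial_x v - \mathrm{F}_\mu^{-1}[v\partial_x v]\bigr).
\end{align*}

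The remaining task is to bound each $R_i$ by $C\mu\varepsilon$ in $H^\alpha$ at the cost of a fixed loss of derivatives. The basic ingredient is the uniform symbolic bound $|\mathrm{F}_\mu(\xi) - 1| \leq C\mu|\xi|^2$, obtained from the expansion $\sqrt{\tanh x /x} = 1 + O(x^2)$ at the origin and from the boundedness of $\mathrm{F}_\mu$ at infinity via a low/high frequency split; the same argument gives the same bound for $|1 - \mathrm{F}_\mu^{-1}(\xi)|$, realizing both $\mathrm{F}_\mu - 1$ and $1 - \mathrm{F}_\mu^{-1}$ as operators of norm $O(\mu)$ from $H^{\alpha+2}$ into $H^\alpha$ uniformly on $\mathcal{A}$. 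Combined with the estimates $|\sqrt{h}-1|_{W^{s,\infty}} + |\partial_x\sqrt{h}|_{H^s} = O(\varepsilon)$ (from Moser-type estimates and the non-cavitation hypothesis \eqref{Non-Cavitation Hypothesis}), $R_1$ rewritten as $(2/\varepsilon)[\sqrt{h}-1,\mathrm{F}_\mu - 1]\partial_x\sqrt{h}$ is directly $O(\mu\varepsilon)$, and $R_2$, after adding and subtracting $\mathrm{F}_\mu^{-1}[v]\partial_x\sqrt{h}$ and writing $v = \mathrm{F}_\mu[\mathrm{F}_\mu^{-1}v]$, splits into two pieces each carrying one factor $\mathrm{F}_\mu - 1$ and one factor $\partial_x\sqrt{h}$, giving the same bound. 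The main obstacle will be $R_3$, since $\mathrm{F}_\mu^{-1}$ is of positive order $1/2$ at high frequency; here I would set $u = \mathrm{F}_\mu^{-1}[v]$, write $v\partial_x v = \mathrm{F}_\mu[u]\partial_x\mathrm{F}_\mu[u]$, and expand each $\mathrm{F}_\mu$ as $1 + (\mathrm{F}_\mu - 1)$, so that the leading contribution $\varepsilon(1 - \mathrm{F}_\mu^{-1})[u\partial_x u]$ is $O(\mu\varepsilon)$ by the symbolic bound, while all remaining terms carry an extra factor $\mathrm{F}_\mu - 1 = O(\mu)$ and are controlled using that $\mathrm{F}_\mu^{-1}$ maps $H^{\alpha+1/2}$ into $H^\alpha$ uniformly in $\mu$. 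The integer $n$ in the statement then reflects the total loss of derivatives incurred in these commutator and tame estimates.
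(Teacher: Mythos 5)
Your derivation is correct, and the arithmetic checks out: writing $\partial_t h + \varepsilon\partial_x(hv)=0$, deriving the $\sqrt{h}$-equation, applying $\mathrm{F}_\mu^{-1}$ to the momentum equation, verifying $\sqrt{h}+\varepsilon\mathrm{F}_\mu^{-1}[v]=\varepsilon\frac{3u^++u^-}{4}+1$ (and the $u^-$ counterpart), the exact cancellation of the $\sqrt{h}\partial_x v$ terms, and the identification of the three residuals $R_1,R_2,R_3$ all go through exactly as you state.

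The paper reaches the same conclusion by a different decomposition. It writes system \eqref{Whitham-Boussinesq system} as $\partial_t U + A(U)\partial_x U = 0$ with the operator-valued matrix symbol $A(U,\xi) = \begin{pmatrix}\varepsilon v & h \\ F_\mu^2(\xi) & \varepsilon v\end{pmatrix}$, diagonalizes it at the symbol level as $A(U,\xi) = PDP^{-1}$, applies $\mathrm{F}_\mu^{-1}\Op(P^{-1})$, and collects the errors coming from $\Op(P^{-1})\Op(A) - \Op(P^{-1}A)$, $\Op(P)\Op(P^{-1}) - I_d$, and $\Op(D) - \Op(P^{-1}A)\Op(P)$, each of which reduces to commutators $[\mathrm{F}_\mu,\cdot]$ or $[\mathrm{F}_\mu^{-1},\cdot]$ and is controlled by Lemma \ref{commutator estimates}. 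Your route skips the $\Op(\cdot)$ formalism entirely: the two transport fields for $\sqrt{h}$ and $\mathrm{F}_\mu^{-1}[v]$ are obtained by hand and the residuals appear as a short, explicit list $(R_1,R_2,R_3)$. The estimating ingredients are essentially the same in both proofs — $[\mathrm{F}_\mu-1,\sqrt{h}-1]=O(\mu\varepsilon)$ for $R_1$, $\mathrm{F}_\mu-1=O(\mu)$ and $\mathrm{F}_\mu^{-1}-1=O(\mu)$ (as maps $H^{\alpha+2}\to H^\alpha$) for $R_2$, and the commutator $[\mathrm{F}_\mu^{-1}-1,v]$ for $R_3$ — and your observation that $|1-\mathrm{F}_\mu^{-1}(\xi)|\lesssim\mu\xi^2$ uniformly on $\mathcal A$ (since $(\sqrt{\mu}|\xi|)^{1/2}\leq\mu\xi^2$ once $\sqrt{\mu}|\xi|\geq1$) is exactly the point that makes the $R_2$ and $R_3$ bounds work. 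Your argument is arguably more elementary and transparent for this $2\times 2$ block; the paper's symbolic-calculus framing buys a cleaner path to generalization for systems where the matrix structure is less tractable. One small remark: you can bound $R_3$ in a single step as $-\varepsilon[\mathrm{F}_\mu^{-1}-1,v]\partial_x v = O(\mu\varepsilon)$ directly from Lemma \ref{commutator estimates}, without the $\mathrm{F}_\mu$-expansion detour you sketch.
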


Before proving this result, we recall some definition and classical results we need.

\begin{mydef}\label{Definition Operateur pseudo}
    Let $f(x) \in \mathcal{C}^{n}(\mathbb{R})$ with $n \in \mathbb{N^*}$ be such that for any $0 \leq \beta \leq n$, $|\partial_x^{\beta} f(x) |_{L^{\infty}} \leq C_{\beta}$, where $C_{\beta} > 0$ is a constant depending on $\beta$. Let also $u(\xi) \in \mathcal{C}^{\infty}(\mathbb{R})$ be a smooth function such that there exists $m \in \mathbb{Z}$ for which $\forall \beta \geq 0$, $|\partial_{\xi}^{\beta} u(\xi)| \leq \widetilde{C}_{\beta} (1+ |\xi|)^{m-|\beta|}$, where $\widetilde{C}_{\beta} > 0$ is a constant depending on $\beta$.
    
    Then we define the operator $Op(f u)$ as follow: for any sufficiently regular functions $v$ we have
    \begin{align*}
        Op(f u)v(x) \colonequals f(x) \mathrm{u}(D)[v](x),
    \end{align*}
    where $\rm{u}(D)$ is the Fourier multiplier associated with the function $u$. 
\end{mydef}

\begin{myrem}
    The function $F_{\mu}(\xi) = \sqrt{\frac{\tanh{(\sqrt{\mu}|\xi|)}}{\sqrt{\mu}|\xi|}}$ defining the Fourier multiplier $\mathrm{F}_{\mu}$ satisfies the condition in the previous Definition \ref{Definition Operateur pseudo}. Idem for its inverse or its square.
\end{myrem}

\begin{mylem}\label{commutator estimates}
     Let $\mu_{\max} >0$. Let $(\varepsilon, \mu) \in \mathcal{A}=\{(\varepsilon,\mu), \ \ 0 \leq \varepsilon \leq 1, \ \ 0\leq \mu\leq \mu_{\max} \}$. Let $\mathrm{G}_{\mu}$ be a Fourier multiplier for which there exists $n \in \mathbb{N}^*$ such that for any $\beta \geq 0$ and $u \in H^{\beta+n}(\mathbb{R})$, $|(\mathrm{G}_{\mu}-1)[u]|_{H^{\beta}} \lesssim \mu |u|_{H^{\beta+n}}$. 
     \begin{itemize}
         \item Let $\alpha \geq 0$. Let also $f$ be a function in $H^{\alpha+n}(\mathbb{R})$. Then for any $u \in H^{\alpha+n}(\mathbb{R})$, the commutator $[\mathrm{G}_{\mu},f]u \colonequals \mathrm{G}_{\mu}[f u] - f \mathrm{G}_{\mu}[u]$ satisfies
     \begin{align*}
         |[\mathrm{G}_{\mu},f]u|_{H^{\alpha}} = |[\mathrm{G}_{\mu}-1,f]u|_{H^{\alpha}} \lesssim \mu |f|_{H^{\alpha+n}}|u|_{H^{\alpha+n}}.
     \end{align*}
     \item Let $\alpha \geq 0$. Let also $g$ be a function such that $\frac{g-1}{\varepsilon}$ is uniformly bounded in $\varepsilon$ in $H^{\alpha+n}(\mathbb{R})$. Then for any function $u \in H^{\alpha+n}(\mathbb{R})$,  
     \begin{align*}
         |[\mathrm{G}_{\mu},g]u|_{H^{\alpha}} = |[\mathrm{G}_{\mu}-1,g-1]u|_{H^{\alpha}} \lesssim \mu \varepsilon |\frac{g-1}{\varepsilon}|_{H^{\alpha+n}}|u|_{H^{\alpha+n}}.
     \end{align*}
     \end{itemize}
\end{mylem}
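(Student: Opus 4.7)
The key observation, which is already half-encoded in the statement, is that multiplication by the constant function $1$ commutes with every linear operator. Hence $[\mathrm{G}_\mu,f] = [\mathrm{G}_\mu-1,f]$, and for the second bullet $[\mathrm{G}_\mu,g] = [\mathrm{G}_\mu,g-1] = [\mathrm{G}_\mu-1,g-1]$, which lets us reduce everything to estimating commutators with the ``small'' operator $\mathrm{G}_\mu - 1$. The plan is then to expand the commutator and estimate each piece by combining the assumption $|(\mathrm{G}_\mu-1)u|_{H^\beta} \lesssim \mu |u|_{H^{\beta+n}}$ with standard Moser-type product estimates in Sobolev spaces.

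For the first bullet, I would write
\begin{align*}
    [\mathrm{G}_\mu - 1, f]u \;=\; (\mathrm{G}_\mu - 1)[fu] \;-\; f\,(\mathrm{G}_\mu - 1)[u],
\end{align*}
and estimate the two terms separately. The first is bounded directly by the assumption on $\mathrm{G}_\mu - 1$:
\begin{align*}
    |(\mathrm{G}_\mu - 1)[fu]|_{H^\alpha} \;\lesssim\; \mu\, |fu|_{H^{\alpha+n}} \;\lesssim\; \mu\, |f|_{H^{\alpha+n}}|u|_{H^{\alpha+n}},
\end{align*}
using that $H^{\alpha+n}(\mathbb{R})$ is a Banach algebra, which is where one pins down the minimal value of $n$ (any $n \in \mathbb{N}^*$ suffices since then $\alpha + n > 1/2$ for $\alpha \geq 0$). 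The second term is handled by first applying the product rule $|fv|_{H^\alpha} \lesssim |f|_{H^{\alpha+n}}|v|_{H^\alpha}$ (Moser with $n \geq 1$) and then invoking the assumption on $\mathrm{G}_\mu - 1$:
\begin{align*}
    |f(\mathrm{G}_\mu - 1)u|_{H^\alpha} \;\lesssim\; |f|_{H^{\alpha+n}}\,|(\mathrm{G}_\mu - 1)u|_{H^\alpha} \;\lesssim\; \mu\,|f|_{H^{\alpha+n}}|u|_{H^{\alpha+n}}.
\end{align*}
Summing the two bounds gives the first estimate.

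For the second bullet, I would apply the first bullet to the function $f \colonequals g - 1$, after noting the identity $[\mathrm{G}_\mu,g]u = [\mathrm{G}_\mu - 1, g-1]u$ (both $\mathrm{G}_\mu$ and $\mathrm{G}_\mu - 1$ commute with multiplication by the constant $1$). This gives
\begin{align*}
    |[\mathrm{G}_\mu,g]u|_{H^\alpha} \;\lesssim\; \mu\,|g-1|_{H^{\alpha+n}}|u|_{H^{\alpha+n}} \;=\; \mu\varepsilon\,\Bigl|\tfrac{g-1}{\varepsilon}\Bigr|_{H^{\alpha+n}}|u|_{H^{\alpha+n}},
\end{align*}
which is exactly the claimed bound. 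No step here is really difficult; the only point requiring mild care is choosing $n$ large enough to make the Sobolev product estimates go through uniformly in $\alpha \geq 0$ (taking $n \geq 1$ is enough), and checking that the hypothesis on $\mathrm{G}_\mu - 1$ can be applied at the Sobolev index $\alpha$, which is immediate since the estimate is assumed for every $\beta \geq 0$. There is no subtle obstruction; the content of the lemma is really the algebraic reduction $[\mathrm{G}_\mu, \cdot] = [\mathrm{G}_\mu - 1, \cdot]$, which converts a full commutator into one that inherits the smallness of $\mathrm{G}_\mu - 1$.
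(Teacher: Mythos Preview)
Your proposal is correct and follows exactly the approach the paper intends: the paper's own proof is the single line ``Use product estimates \ref{product estimate},'' and what you have written is precisely the natural unpacking of that instruction---reduce to $[\mathrm{G}_\mu-1,f]$, split the commutator, and control each piece with the hypothesis on $\mathrm{G}_\mu-1$ together with the Sobolev product/algebra estimates.
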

\begin{proof}
    Use product estimates \ref{product estimate}.
\end{proof}

\begin{myrem}\label{remark commutator estimates}
    The Fourier multiplier $\mathrm{F}_{\mu}$ satisfies the assumption of Lemma \ref{commutator estimates}. Idem for its inverse or its square.
    The functions $h$, $\sqrt{h}$ and its inverse satisfy the assumption on $g$.
\end{myrem}
\begin{proof}
The point on the Fourier multipliers $\mathrm{F}_{\mu}$, its inverse and its square is obvious. Idem for the function $h$. 
To prove the point on $\sqrt{h}$, we just need to use composition estimates \ref{Composition estimate} with $G(x) = \sqrt{1+x}-1$.

We now deal with $\frac{1}{\sqrt{h}}$. We remark that $\frac{1}{\sqrt{h}}-1 = \frac{1-\sqrt{h}}{1 + \sqrt{h}-1}$. Then we use quotient estimates \ref{Quotient estimate} ($\sqrt{h} \geq \sqrt{h_{\min}}$ because $\zeta$ satisfies the non-cavitation hypothesis \eqref{Non-Cavitation Hypothesis}). We get
\begin{align*}
    |\frac{1}{\sqrt{h}}-1|_{H^{\alpha}} \leq C(\frac{1}{\sqrt{h_{\min}}},|\sqrt{h}-1|_{H^{\max{(t_0,\alpha)}}})|\sqrt{h}-1|_{H^{\alpha}}.
\end{align*}
\end{proof}

We now prove Proposition \ref{Diag WB section 2}.
\begin{proof}
    We can write system \eqref{Whitham-Boussinesq system} under the form
\begin{align*}
    \partial_t U + A(U) \partial_x U = 0,
\end{align*}
where $U \colonequals \begin{pmatrix} \zeta \\ v \end{pmatrix}$ and $A(U) \colonequals \begin{pmatrix} \varepsilon v & h \\ \mathrm{F}_{\mu}^2[\circ] & \varepsilon v \end{pmatrix} $, with $h \colonequals 1 + \varepsilon \zeta$. 

The symbol of operator $A(U)$ is $A(U,\xi) = \begin{pmatrix} \varepsilon v & h \\ F_{\mu}^2(\xi) & \varepsilon v \end{pmatrix}$, $\xi \in \mathbb{R}$ being the frequency variable. So that for any smooth functions $W = (w_1, w_2)^T$, for all $x\in\mathbb{R}$ we have 
\begin{align*}
    A(U)W(t,x) = \Op(A(U,\xi))W(t,x) = \begin{pmatrix} \varepsilon v(t,x) w_1(t,x) + h(t,x) w_2(t,x) \\ \mathrm{F}_{\mu}^2 [w_1](t,x) + \varepsilon v(t,x) w_2(t,x) \end{pmatrix}.
\end{align*}

Now, we diagonalize $A(U,\xi)$.
\begin{align*}
    \det(A(U,\xi) - \lambda I_2) = (\varepsilon v - \lambda)^2 - F_{\mu}^2(\xi) h,
\end{align*}
so that 
\begin{align*}
    A(U,\xi) = P D P^{-1},
\end{align*}
with
\begin{align*}
    P = \begin{pmatrix} \frac{\sqrt{h}}{F_{\mu}(\xi)} & -\frac{\sqrt{h}}{F_{\mu}(\xi)} \\ 1 & 1 \end{pmatrix}, \ \ P^{-1} = \begin{pmatrix} \frac{F_{\mu}(\xi)}{2\sqrt{h}} & \frac{1}{2} \\ -\frac{F_{\mu}(\xi)}{2\sqrt{h}} & \frac{1}{2} \end{pmatrix} \ \ \et \ \ D = \begin{pmatrix} \varepsilon v + F_{\mu}(\xi)\sqrt{h} & 0 \\ 0 & \varepsilon v - F_{\mu}(\xi)\sqrt{h} \end{pmatrix}.
\end{align*}

Based on the symbolic calculus we apply $\mathrm{F}_{\mu}^{-1} \Op(P^{-1})$ to the system:
\begin{multline}\label{first step diagonalization}
    \mathrm{F}_{\mu}^{-1} \Op(P^{-1}) \partial_t U + \mathrm{F}_{\mu}^{-1} \Op(P^{-1}) \Op(A(U,\xi))\Op(P P^{-1}) \partial_x U = 0, \\
    \iff \mathrm{F}_{\mu}^{-1} \Op(P^{-1}) \partial_t U + \mathrm{F}_{\mu}^{-1} \Op(D) \mathrm{F}_{\mu} \mathrm{F}_{\mu}^{-1} \Op(P^{-1}) \partial_x U = R_1, 
\end{multline}
where 
\begin{align*}
    R_1 = &\mathrm{F}_{\mu}^{-1}(\Op(P^{-1}A(U,\xi))-\Op(P^{-1})\Op(A(U,\xi)))\partial_x U \\
    + &\mathrm{F}_{\mu}^{-1} \Op(P^{-1}A(U,\xi)) (\Op(P)\Op(P^{-1}) - \Op(P P^{-1}))\partial_x U \\
    + &\mathrm{F}_{\mu}^{-1} (\Op(D) - \Op(P^{-1}A(U,\xi))\Op(P)) \Op(P^{-1})\partial_x U.
\end{align*}
We remark that for each operators $\mathrm{F}_{\mu}^{-1}$, $\partial_x$, $\Op(P^{-1}A(U,\xi))$ and $\Op(P^{-1})$ there exists $n \in \mathbb{N}$ such that for any $\beta \geq 0$, the operator is bounded from $H^{\beta+n}(\mathbb{R})$ to $H^{\beta}(\mathbb{R})$. 
\begin{mylem} 
    $R_1$ is of order $O(\mu\varepsilon)$.
\end{mylem}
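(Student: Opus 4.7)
The plan is to expand each of the three differences composing $R_1$ into a finite sum of matrix commutators of the form $[\mathrm{F}_\mu^k, g]$ with $k \in \{-1,1,2\}$ and $g$ belonging to the list $\{h, \sqrt{h}, 1/\sqrt{h}, \varepsilon v\}$, composed (on the left and right) with operators that are uniformly bounded on Sobolev spaces, possibly after losing a fixed number of derivatives. Since $\mathrm{F}_\mu^k - 1$ is a Fourier multiplier of order $O(\mu)$ in the sense of Lemma \ref{commutator estimates}, and since each $g-1$ is of order $O(\varepsilon)$ (by the non-cavitation assumption and Remark \ref{remark commutator estimates}; note in particular that $\varepsilon v$ is itself $O(\varepsilon)$-small as soon as $v$ is bounded), the second item of Lemma \ref{commutator estimates} immediately yields a gain of $\mu\varepsilon$ for each such commutator.

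First I would handle $\Op(P^{-1}A(U,\xi)) - \Op(P^{-1})\Op(A(U,\xi))$ by direct matrix expansion. The entries of $P^{-1}A$ are symbols of the form $\alpha(\xi)\beta(x)$, where $\alpha \in \{1, F_\mu(\xi), F_\mu^2(\xi)\}$ and $\beta \in \{1, \sqrt{h}, 1/\sqrt{h}, \varepsilon v\}$; the operator composition $\Op(P^{-1})\Op(A)$ produces the same collection of $\alpha$-$\beta$ strings but with the Fourier multiplier now commuted past a multiplication operator, i.e.\ typical terms are $\frac{1}{\sqrt{h}}\mathrm{F}_\mu[h \, \cdot\, ]$ in place of $\sqrt{h}\,\mathrm{F}_\mu$. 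The purely symbolic (Fourier-multiplier-only) contributions cancel identically, leaving precisely a sum of commutators $[\mathrm{F}_\mu^k, g]$ conjugated by bounded operators. A completely analogous bookkeeping for $\Op(P)\Op(P^{-1}) - \Op(PP^{-1}) = \Op(P)\Op(P^{-1}) - I$ and for $\Op(D) - \Op(P^{-1}A(U,\xi))\Op(P)$ exhibits each as such a sum, using the symbol-level identity $P^{-1}AP = D$.

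Then for each commutator I apply Lemma \ref{commutator estimates} together with Remark \ref{remark commutator estimates}: the factor $\mathrm{F}_\mu^k - 1$ contributes the $\mu$, and the factor $g-1$ (of order $\varepsilon$) contributes the $\varepsilon$, yielding an $H^\alpha$ bound of the form $C\mu\varepsilon \,|\zeta|_{H^{\alpha+n}}|v|_{H^{\alpha+n}}$ for each term. The outer $\mathrm{F}_\mu^{-1}$ in $R_1$ and the $\partial_x$ acting on $U$ are Fourier multipliers of respective orders $0$ and $1$ and cost at most one additional derivative on the unknowns. Summing the finitely many contributions, using the Sobolev algebra together with the quotient and composition estimates for $\sqrt{h}$ and $1/\sqrt{h}$ (valid under the non-cavitation assumption), and choosing $n \in \mathbb{N}^*$ large enough to absorb all the derivative losses, one concludes $|R_1|_{H^\alpha} \leq C\bigl(\tfrac{1}{h_{\min}}, \mu_{\max}, |\zeta|_{H^{\alpha+n}}, |v|_{H^{\alpha+n}}\bigr)\, \mu\varepsilon$.

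The main obstacle is the bookkeeping: one must check that after all the matrix multiplications no commutator of $\mathrm{F}_\mu^k$ with a quantity of order $O(1)$ (rather than $O(\varepsilon)$) survives, and that every ``bare'' Fourier multiplier cancels against its counterpart. The identity $P^{-1}AP = D$ at the symbol level guarantees this, provided one is careful to note that the off-diagonal $v$-entries of $A(U,\xi)$ come with the explicit $\varepsilon$ prefactor, which is exactly what converts a would-be $O(\mu)$ error into the desired $O(\mu\varepsilon)$ one.
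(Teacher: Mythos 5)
Your proposal is correct and follows the same strategy as the paper's proof: explicit matrix expansion of each of the three differences into commutators $[\mathrm{F}_\mu^k, g]$ (with $g \in \{h,\sqrt h,1/\sqrt h,\varepsilon v\}$) composed with Sobolev-bounded operators, followed by an application of Lemma \ref{commutator estimates} together with Remark \ref{remark commutator estimates}, the non-cavitation bound, and product/quotient estimates. The only tiny slip is calling the $\varepsilon v$-entries of $A(U,\xi)$ ``off-diagonal'' (they sit on the diagonal), but this does not affect the argument.
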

\begin{proof}
There are three terms in $R_1$. We deal with each of those separately. 
\begin{itemize}
    \item First term: for any $W = (w_1,w_2) \in H^{\beta}(\mathbb{R}) \times H^{\beta}(\mathbb{R})$ with $\beta \geq 0$,
    \begin{align*}
        (\Op(P^{-1}A(U,\xi)) - \Op(P^{-1})\Op(A(U,\xi)))W &= \frac{1}{2\sqrt{h}} \begin{pmatrix} -[\mathrm{F}_{\mu},\varepsilon v]w_1 - [\mathrm{F}_{\mu},h]w_2 \\ [\mathrm{F}_{\mu},\varepsilon v]w_1 + [\mathrm{F}_{\mu},h]w_2 \end{pmatrix}.
    \end{align*}
    Using Lemma \ref{commutator estimates} and $h \geq h_{\min}$, we have
    \begin{align*}
        \frac{1}{2\sqrt{h}}[\mathrm{F}_{\mu},\varepsilon v]w_1 = O(\mu\varepsilon) \ \ \et \ \ \frac{1}{2\sqrt{h}}[\mathrm{F}_{\mu},h]w_2 = O(\mu\varepsilon).
    \end{align*}

    \item Second term: for any $W = (w_1,w_2) \in H^{\beta}(\mathbb{R}) \times H^{\beta}(\mathbb{R})$ with $\beta \geq 0$,
    \begin{align*}
        (\Op(P)\Op(P^{-1})-I_d)W = \begin{pmatrix} \sqrt{h} [\mathrm{F}_{\mu}^{-1}, \frac{1}{\sqrt{h}}] \mathrm{F}_{\mu}[w_1] \\ w_2 \end{pmatrix}. 
    \end{align*}
    Using Lemma \ref{commutator estimates} and the boundedness of $\mathrm{F}_{\mu}$ and $\sqrt{h}$ we have
    \begin{align*}
        \sqrt{h} [\mathrm{F}_{\mu}^{-1}, \frac{1}{\sqrt{h}}] \mathrm{F}_{\mu}[w_1] = O(\mu\varepsilon).
    \end{align*}
    \item Third term: for any $W = (w_1,w_2) \in H^{\beta}(\mathbb{R}) \times H^{\beta}(\mathbb{R})$ with $\beta \geq 0$,
    \begin{multline*}
        (\Op(D)-\Op(P^{-1}A(U,\xi))\Op(P))W \\
        = \begin{pmatrix} -\frac{\varepsilon v}{2\sqrt{h}} [\mathrm{F}_{\mu}-1,\sqrt{h}]\mathrm{F}_{\mu}^{-1}[w_1-w_2] - \frac{1}{2}[\mathrm{F}_{\mu}^2,\sqrt{h}]\mathrm{F}_{\mu}^{-1}[w_1-w_2] \\ \frac{\varepsilon v}{2\sqrt{h}} [\mathrm{F}_{\mu}-1,\sqrt{h}]\mathrm{F}_{\mu}^{-1}[w_1-w_2] - \frac{1}{2}[\mathrm{F}_{\mu}^2,\sqrt{h}]\mathrm{F}_{\mu}^{-1}[w_1-w_2] \end{pmatrix} 
    \end{multline*}
    Using Lemma \ref{commutator estimates}, $h \geq h_{\min}$ and the boundedness of $\mathrm{F}_{\mu}^{-1}$, we have
    \begin{align*}
        \begin{cases}
            \frac{\varepsilon v}{2\sqrt{h}} [\mathrm{F}_{\mu}-1,\sqrt{h}]\mathrm{F}_{\mu}^{-1}[w_1-w_2] = O(\mu\varepsilon), \\
            \frac{1}{2}[\mathrm{F}_{\mu}^2,\sqrt{h}]\mathrm{F}_{\mu}^{-1}[w_1-w_2] = O(\mu\varepsilon).
        \end{cases}
    \end{align*}
\end{itemize}
\end{proof}
We continue the proof of Proposition \ref{Diag WB section 2}. We compute $\mathrm{F}_{\mu}^{-1}\Op(P^{-1})\partial_t U$:
\begin{align*}
    \mathrm{F}_{\mu}^{-1}\Op(P^{-1})\partial_t U = \mathrm{F}_{\mu}^{-1} \begin{pmatrix} \frac{1}{2\sqrt{h}} \mathrm{F}_{\mu}[\partial_t \zeta] + \frac{\partial_t v}{2} \\ -\frac{1}{2\sqrt{h}} \mathrm{F}_{\mu}[\partial_t \zeta] + \frac{\partial_t v}{2} \end{pmatrix} = \begin{pmatrix} \partial_t (\frac{1}{\varepsilon}\sqrt{h} + \frac{1}{2}\mathrm{F}_{\mu}^{-1}[v]) \\ \partial_t (-\frac{1}{\varepsilon} \sqrt{h} + \frac{1}{2}\mathrm{F}_{\mu}^{-1}[v]) \end{pmatrix} + R_2,
\end{align*}
where $R_2 = \mathrm{F}_{\mu}^{-1} \begin{pmatrix} - [\mathrm{F}_{\mu},\frac{1}{2\sqrt{h}}]\partial_t \zeta \\ [\mathrm{F}_{\mu},\frac{1}{2\sqrt{h}}]\partial_t \zeta \end{pmatrix}$. Here, to prove that $R_2$ is of order $O(\mu\varepsilon)$, in addition of Lemma \ref{commutator estimates} we need a control of $|\partial_t\zeta|_{H^{\beta}}$ for any $\beta \geq 0$. We get it using the first equation of \eqref{Whitham-Boussinesq system} and product estimates \ref{product estimate}. 

\indent

The same computations for $\mathrm{F}_{\mu}^{-1}\Op(P^{-1})\partial_x U$ gives
\begin{align*}
    \mathrm{F}_{\mu}^{-1}\Op(P^{-1})\partial_x U =
    \begin{pmatrix} \partial_x (\frac{1}{\varepsilon}\sqrt{h} + \frac{1}{2}\mathrm{F}_{\mu}^{-1}[v]) \\ \partial_x (-\frac{1}{\varepsilon}\sqrt{h} + \frac{1}{2}\mathrm{F}_{\mu}^{-1}[v]) \end{pmatrix} + R_3,
\end{align*}
where $R_3 = \mathrm{F}_{\mu}^{-1} \begin{pmatrix} - [\mathrm{F}_{\mu},\frac{1}{2\sqrt{h}}]\partial_x \zeta \\ [\mathrm{F}_{\mu},\frac{1}{2\sqrt{h}}]\partial_x \zeta \end{pmatrix}$ is of order $O(\mu\varepsilon)$.

So that 
\begin{multline}
    \mathrm{F}_{\mu}^{-1} \Op(D) \mathrm{F}_{\mu}\mathrm{F}_{\mu}^{-1}\Op(P^{-1})\partial_x U
    \\
    = \begin{pmatrix} \varepsilon \mathrm{F}_{\mu}^{-1}[v\mathrm{F}_{\mu}[\circ]] + \mathrm{F}_{\mu}^{-1}[\sqrt{h} \mathrm{F}_{\mu}^2[\circ]] & 0 \\ 0 & \varepsilon \mathrm{F}_{\mu}^{-1} [v \mathrm{F}_{\mu}[\circ]] - \mathrm{F}_{\mu}^{-1}[\sqrt{h}\mathrm{F}_{\mu}^2[\circ]] \end{pmatrix}\\ \times \begin{pmatrix} \partial_x (\frac{1}{\varepsilon}\sqrt{h} + \frac{1}{2}\mathrm{F}_{\mu}^{-1}[v]) \\ \partial_x (-\frac{1}{\varepsilon}\sqrt{h} + \frac{1}{2}\mathrm{F}_{\mu}^{-1}[v]) \end{pmatrix} + R_4,
\end{multline}
where $R_4 = \mathrm{F}_{\mu}^{-1} \Op(D) \mathrm{F}_{\mu} [R_3] = O(\mu\varepsilon)$ because of the boundedness of the operators $\mathrm{F}_{\mu}^{-1}, \Op(D)$ and $\mathrm{F}_{\mu}$.

Hence, we get
\begin{align*}
    \begin{cases}
        \partial_t(\frac{1}{\varepsilon}\sqrt{h} + \frac{1}{2} \mathrm{F}_{\mu}^{-1}[v]) + \big( \varepsilon \mathrm{F}_{\mu}^{-1}[v\circ] + \mathrm{F}_{\mu}^{-1}[\sqrt{h} \mathrm{F}_{\mu}[\circ]] \big) \mathrm{F}_{\mu}\partial_x(\frac{1}{\varepsilon}\sqrt{h} + \frac{1}{2} \mathrm{F}_{\mu}^{-1}[v]) = R_5, \\
        \partial_t (-\frac{1}{\varepsilon}\sqrt{h} + \frac{1}{2} \mathrm{F}_{\mu}^{-1}[v]) + \big( \varepsilon \mathrm{F}_{\mu}^{-1} [v \circ] - \mathrm{F}_{\mu}^{-1}[\sqrt{h}\mathrm{F}_{\mu}[\circ]] \big) \mathrm{F}_{\mu}\partial_x(-\frac{1}{\varepsilon}\sqrt{h} + \frac{1}{2} \mathrm{F}_{\mu}^{-1}[v]) = R_6,
    \end{cases}
\end{align*}
where $R_5$ and $R_6$ are combinations of $R_1$, $R_2$, $R_3$, $R_4$. They are of order $O(\mu\varepsilon)$. See also that $\frac{1}{\epsilon} \partial_x \sqrt{h}$ and $\frac{1}{\epsilon} \partial_x \sqrt{h}$ are of order $O(1)$

But by Lemma \ref{commutator estimates}, we know that for any $w \in H^{\beta}(\mathbb{R})$ with $\beta \geq 0$, we have
\begin{align*}
    [\mathrm{F}_{\mu},\sqrt{h}]w = O(\mu\varepsilon) \ \ \et \ \ [\mathrm{F}_{\mu}^{-1},v]w = O(\mu).
\end{align*}
So
\begin{align*}
    \begin{cases}
        \partial_t(\frac{\sqrt{h}-1}{\varepsilon} +  \frac{1}{2}\mathrm{F}_{\mu}^{-1}[v]) + \big( \varepsilon \mathrm{F}_{\mu}^{-1}[v] + \sqrt{h} \big) \mathrm{F}_{\mu}\partial_x(\frac{\sqrt{h}-1}{\varepsilon} + \frac{1}{2}\mathrm{F}_{\mu}^{-1}[v]) = R_7, \\
        \partial_t (-\frac{\sqrt{h}-1}{\varepsilon} + \frac{1}{2}\mathrm{F}_{\mu}^{-1}[v]) + \big( \varepsilon \mathrm{F}_{\mu}^{-1} [v] - \sqrt{h} \big) \mathrm{F}_{\mu}\partial_x(-\frac{\sqrt{h}-1}{\varepsilon} + \frac{1}{2}\mathrm{F}_{\mu}^{-1}[v]) = R_8,
    \end{cases}
\end{align*}
where $R_7$ and $R_8$ are of order $O(\mu\varepsilon)$.

We now denote 
\begin{align*}
    u^+ = \frac{\sqrt{h}-1}{\varepsilon} + \frac{1}{2}\mathrm{F}_{\mu}^{-1}[v], \ \ u^- = -\frac{\sqrt{h}-1}{\varepsilon} + \frac{1}{2}\mathrm{F}_{\mu}^{-1}[v].
\end{align*}
We get from \eqref{first step diagonalization} and the above:
\begin{align*}
    \begin{cases}
        \partial_t u^+ + (\varepsilon\frac{3 u^+ + u^-}{2}+1) \mathrm{F}_{\mu}\partial_x u^+ = O(\mu\varepsilon), \\
        \partial_t u^- + (\varepsilon\frac{u^+ + 3u^-}{2}-1) \mathrm{F}_{\mu}\partial_x u^- = O(\mu\varepsilon).
    \end{cases}
\end{align*}
\end{proof}

\label{Formal diagonalization of the Whitham-Boussinesq equation}

\subsection{From the diagonalized Whitham-Boussinesq equations to the Whitham equations}

In this subsection we prove Proposition \ref{First main result}.

To prove the first part of the proposition, i.e. there exists a time $T > 0$ such that for all times $t \in [0, \frac{T}{\epsilon}]$, $u^-(t,\cdot)$ and $u^+(t,\cdot)$ are of order $O(\mu)$, we need local well-posedness and stability results on the exact diagonalized system
\begin{align}\label{Diagonalized WB equations}
        \begin{cases}
            \partial_t u^+_e + (\varepsilon\frac{3 u^+_e + u^-_e}{2}+1) \mathrm{F}_{\mu}\partial_x u^+_e = 0, \\
            \partial_t u^-_e + (\varepsilon\frac{u^+_e + 3u^-_e}{2}-1) \mathrm{F}_{\mu}\partial_x u^-_e = 0.
        \end{cases}
    \end{align}

We begin by proving the local well-posedness of system \eqref{Diagonalized WB equations}.
\begin{mypp}\label{local existence}(local existence)
    Let $(\mu,\epsilon) \in \mathcal{A}$ (see Definition \ref{shallow water regime}). Let $1/2 < t_0 \leq 1$, $\alpha \geq t_0+1$ and $u^+_{0,e}, u^-_{0,e} \in H^{\alpha}(\mathbb{R})$. Then there exists a time $T > 0$ such that system \eqref{Diagonalized WB equations}
    admits a unique solution $(u^+_e, u^-_e) \in \mathcal{C}([0,\frac{T}{\varepsilon}];H^{\alpha}(\mathbb{R})^2)$ with initial conditions $(u^+_{0,e},u^-_{0,e})$.

Moreover $\frac{1}{T}$ and $\sup_{0 \leq t \leq \frac{T}{\varepsilon}} |u^{+}_e|_{H^{\alpha}} + |u^-_e|_{H^{\alpha}}$ can be estimated by $|u^+_{0,e}|_{H^{\alpha}}$, $|u^-_{0,e}|_{H^{\alpha}}$,  $t_0$, $\mu_{\max}$ (and are independent of the parameters $(\mu,\varepsilon) \in \mathcal{A}$).
\end{mypp}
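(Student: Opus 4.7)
This is a standard quasilinear existence/uniqueness result: the goal is to construct the solution by a Picard iteration and to show, via an energy estimate that exploits the skew-adjointness of $\mathrm{F}_{\mu}\partial_x$ on $L^2(\mathbb{R})$, that the iterates remain bounded on the long time scale $t\sim 1/\varepsilon$ uniformly in $(\mu,\varepsilon)\in\mathcal{A}$. I would define inductively $(u^{+,(n+1)},u^{-,(n+1)})$ as the solution of the decoupled linear system
\begin{align*}
\partial_t u^{\pm,(n+1)} + a_\pm^{(n)}\,\mathrm{F}_\mu\partial_x u^{\pm,(n+1)} = 0, \qquad u^{\pm,(n+1)}\big|_{t=0} = u^\pm_{0,e},
\end{align*}
with $a_+^{(n)} = 1 + \frac{\varepsilon}{2}(3u^{+,(n)}+u^{-,(n)})$ and $a_-^{(n)} = -1 + \frac{\varepsilon}{2}(u^{+,(n)}+3u^{-,(n)})$, starting from the constant iterate $u^{\pm,(0)}\equiv u^{\pm}_{0,e}$. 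For fixed $n$, each equation is a linear scalar transport of order one whose well-posedness in $\mathcal{C}([0,T_n];H^\alpha)$ follows by a Friedrichs mollifier argument (or equivalently by adding a vanishing viscosity $\nu\partial_x^2$ and passing to the limit).

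The heart of the argument is the uniform $H^\alpha$ energy estimate. Applying $\Lambda^\alpha=(1-\partial_x^2)^{\alpha/2}$ and pairing with $\Lambda^\alpha u^{\pm,(n+1)}$ gives
\begin{align*}
\frac{1}{2}\frac{d}{dt}|u^{\pm,(n+1)}|_{H^\alpha}^2
= -\int a_\pm^{(n)}\,\mathrm{F}_\mu\partial_x(\Lambda^\alpha u^{\pm,(n+1)})\cdot\Lambda^\alpha u^{\pm,(n+1)}\,\dx
-\int [\Lambda^\alpha,a_\pm^{(n)}]\,\mathrm{F}_\mu\partial_x u^{\pm,(n+1)}\cdot\Lambda^\alpha u^{\pm,(n+1)}\,\dx.
\end{align*}
Since $\mathrm{F}_\mu\partial_x$ has the purely imaginary, odd symbol $iF_\mu(\xi)\xi$ and is therefore skew-adjoint on $L^2$, the first integral equals $\frac{1}{2}\int [a_\pm^{(n)},\mathrm{F}_\mu\partial_x]\,\Lambda^\alpha u^{\pm,(n+1)}\cdot\Lambda^\alpha u^{\pm,(n+1)}\,\dx$. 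Writing $a_\pm^{(n)}=\pm 1 + \varepsilon b_\pm^{(n)}$ so that the $\pm 1$ part drops out of the commutator, decomposing $[\varepsilon b_\pm^{(n)},\mathrm{F}_\mu\partial_x] = -\varepsilon\mathrm{F}_\mu(\partial_x b_\pm^{(n)}) + \varepsilon[b_\pm^{(n)},\mathrm{F}_\mu]\partial_x$, and invoking Lemma \ref{commutator estimates} on the second piece (which yields a gain of $\mu\varepsilon$), this contribution is bounded by $C\varepsilon(1+|(u^{+,(n)},u^{-,(n)})|_{H^\alpha})|u^{\pm,(n+1)}|_{H^\alpha}^2$. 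The Kato--Ponce commutator estimate applied to $[\Lambda^\alpha,a_\pm^{(n)}]=\varepsilon[\Lambda^\alpha,b_\pm^{(n)}]$, combined with the Sobolev embedding $H^{\alpha-1}\hookrightarrow L^\infty$ (valid since $\alpha\geq t_0+1>3/2$) and the uniform boundedness of $\mathrm{F}_\mu:L^2\to L^2$, controls the second integral in the same manner. Hence
\begin{align*}
\frac{d}{dt}|u^{\pm,(n+1)}|_{H^\alpha}^2 \leq C\varepsilon\,P\bigl(|u^{+,(n)}|_{H^\alpha},|u^{-,(n)}|_{H^\alpha}\bigr)\,|u^{\pm,(n+1)}|_{H^\alpha}^2,
\end{align*}
for a polynomial $P$, and a straightforward induction furnishes $T>0$ depending only on $|u^\pm_{0,e}|_{H^\alpha}$, $\mu_{\max}$ and $t_0$ such that each iterate is bounded by $2(|u^+_{0,e}|_{H^\alpha}+|u^-_{0,e}|_{H^\alpha})$ on the whole interval $[0,T/\varepsilon]$.

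Convergence and uniqueness are then handled by running the same energy method at lower regularity. The differences $\delta u^{\pm,(n+1)}=u^{\pm,(n+1)}-u^{\pm,(n)}$ solve linear transport equations with source proportional to $\varepsilon\,\delta u^{\pm,(n)}\,\mathrm{F}_\mu\partial_x u^{\pm,(n)}$, and a parallel estimate in $H^{\alpha-1}$ (or $L^2$), using the uniform $H^\alpha$ bound from the previous step, yields geometric contraction on $[0,T/\varepsilon]$. The limit belongs to $L^\infty([0,T/\varepsilon];H^\alpha)$ by weak-* compactness and to $\mathcal{C}([0,T/\varepsilon];H^\alpha)$ by a Bona--Smith regularisation argument; uniqueness follows from the same difference estimate applied to two candidate solutions. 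The only non-routine point, and therefore the main obstacle, is keeping every constant in the energy estimate independent of $\mu\in[0,\mu_{\max}]$: this is exactly where the $\mu\varepsilon$ gain provided by Lemma \ref{commutator estimates} on $[b_\pm^{(n)},\mathrm{F}_\mu]\partial_x$ is essential, since without it the nonlocal commutator would contribute an $O(1)$ term and destroy the $\varepsilon^{-1}$ time scale.
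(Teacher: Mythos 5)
Your overall strategy — a Picard iteration on the linearized transport system, an $H^\alpha$ energy estimate that exploits the skew-adjointness of $\mathrm{F}_\mu\partial_x$, and a contraction argument at one derivative lower — matches the paper's route quite closely: the paper establishes exactly these energy estimates (Lemmas \ref{Energy estimates of order 0} and \ref{Energy estimates of order beta}) for the linearized system and then cites Métivier's regularization/iteration scheme to complete the construction, which is just a fleshed-out version of your iteration step.

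There is, however, a genuine flaw in your treatment of the nonlocal commutator. You claim that the $\mu\varepsilon$ gain from Lemma~\ref{commutator estimates} applied to $[b_\pm^{(n)},\mathrm{F}_\mu]\partial_x$ is \emph{essential} and that without it the term is $O(1)$, which ``would destroy the $\varepsilon^{-1}$ time scale.'' This is doubly wrong. First, the factor $\varepsilon$ needed for the long time scale does not come from any $\mu$ gain: since $a_\pm^{(n)} = \pm 1 + \varepsilon b_\pm^{(n)}$ and the constant $\pm 1$ commutes with $\mathrm{F}_\mu\partial_x$, one has $[a_\pm^{(n)},\mathrm{F}_\mu\partial_x] = \varepsilon[b_\pm^{(n)},\mathrm{F}_\mu\partial_x]$ identically, so the $\varepsilon$ is already out front and an $O(1)$ bound on the remaining commutator suffices. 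Second, you \emph{cannot} invoke Lemma~\ref{commutator estimates} at the top order: that lemma bounds $|[\mathrm{F}_\mu,f]u|_{H^\beta}$ by $\mu\,|f|_{H^{\beta+n}}|u|_{H^{\beta+n}}$ and so costs extra derivatives on $u$, while in your estimate $u = \Lambda^\alpha u^{\pm,(n+1)}$ lives only in $L^2$, so the required smoothness is simply not available. The correct and needed tool — the one the paper actually uses, see~\eqref{estimates on commutator F and A} — is the Kato--Ponce--type commutator estimate~\ref{Commutator estimates 2.0} from the appendix, $|[F(D),f]g|_2 \lesssim \mathcal{N}^\alpha(F)|f|_{H^{\max(t_0+1,\alpha)}}|g|_{H^{\alpha-1}}$, which gains no $\mu$ but crucially asks for no extra regularity on $g$, and whose seminorm $\mathcal{N}^0(F_\mu)$ is uniform in $\mu\in[0,\mu_{\max}]$. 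With this substitution your proof plan goes through; as written, the step you identify as the ``heart of the argument'' relies on an estimate that would lose derivatives and is in any case not what makes the long-time bound work.
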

\begin{proof}
    System \eqref{Diagonalized WB equations} is very similar to symmetric quasilinear hyperbolic systems. The only difference is the presence of the non-local operator $\mathrm{F}_{\mu}$. The well-posedness of such systems relies on the energy estimates (a priori estimates). We will establish them here. For the rest of the proof, one can follow the one in \cite{Metivier2008} where the author uses a method of regularisation suitable for our system .   
    
    First, remark that if we define $U_e(t) \colonequals \begin{pmatrix} u^+_e(t) \\ u^-_e(t) \end{pmatrix}$ and 
    \begin{align}\label{expression of A}
        A(U_e) \colonequals \begin{pmatrix} (\varepsilon \frac{3u^+_e + u^-_e}{2} +1) & 0 \\ 0 & (\varepsilon \frac{u^+_e + 3u^-_e}{2}-1) \end{pmatrix}.
    \end{align}
    Then system \eqref{Diagonalized WB equations} can be written under the form
    \begin{align}\label{matrix form of diagonalized system}
        \partial_t U_e + A(U_e)\mathrm{F}_{\mu}\partial_x U_e = 0.
    \end{align}
    It justifies the following framework for energy estimates.
    Let $\beta \geq 0$, $R \in L^{\infty}([0,\frac{T}{\varepsilon}]; H^{\beta}(\mathbb{R})^2)$ and $\underline{U} \in L^{\infty}([0,\frac{T}{\varepsilon}]; H^{\max{(\beta,t_0+1)}}(\mathbb{R})^2)$. Let $U \in \mathcal{C}^0([0,\frac{T}{\varepsilon}];H^{\beta}(\mathbb{R})^2)\cap \mathcal{C}^1([O,\frac{T}{\varepsilon}];H^{\beta+1}(\mathbb{R})^2)$ solves the equation 
    \begin{align}\label{generic equation for energy estimates}
        \partial_t U + A(\underline{U})\mathrm{F}_{\mu}\partial_x U = \varepsilon R.
    \end{align}
    
    \begin{mylem}\label{Energy estimates of order 0}(Energy estimates of order 0)
        For all times $t \in [0, \frac{T}{\varepsilon}]$,
        \begin{align*}
            |U(t)|_2 \leq e^{\varepsilon \delta_0 t}|U(0)|_2 + \varepsilon \gamma_0 \int_{0}^{t} |R(t')|_2 \dt',
        \end{align*}
        where $\delta_0, \gamma_0  = C(T,|\underline{U}|_{L^{\infty}([0,\frac{T}{\varepsilon}]; H^{t_0 +1})})$
    \end{mylem}
    \begin{proof}
        In what follows we will write $(\cdot,\cdot)_2$ the scalar product in $L^2(\mathbb{R})^2$.
        Using the fact that $U$ solves equation \eqref{generic equation for energy estimates} we get
        \begin{align*}
            \frac{1}{2} \frac{\mathrm{d}}{\dt} |U|_2^2 = (\partial_t U,U)_2 &= -(A(\underline{U})\mathrm{F}_{\mu}\partial_x U, U)_2 + \varepsilon (R,U)_2.
        \end{align*}
        But $A(\underline{U})$ is symmetric, so
        \begin{align*}
            (A(\underline{U})\mathrm{F}_{\mu}\partial_x U, U)_2 &= (\partial_x \mathrm{F}_{\mu}U, A(\underline{U})U)_2 = -(\mathrm{F}_{\mu}U,(\partial_x A(\underline{U}))U)_2 - (U,\mathrm{F}_{\mu} [A(\underline{U})\partial_x U])_2 \\
            &= (\mathrm{F}_{\mu}U,(\partial_x A(\underline{U}))U)_2 - (A(\underline{U})\mathrm{F}_{\mu}\partial_x U, U)_2 - ([\mathrm{F}_{\mu},A(\underline{U})]\partial_x U, U)_2,
        \end{align*}
        where $\partial_x A(\underline{U}) = \begin{pmatrix} \varepsilon \frac{3 \partial_x \underline{u_1} +\partial_x \underline{u_2}}{2} & 0 \\ 0 & \varepsilon \frac{ \partial_x \underline{u_1} +3\partial_x \underline{u_2}}{2} \end{pmatrix}$ and
        \begin{align*}
            [\mathrm{F}_{\mu},A(\underline{U})] = \begin{pmatrix} [\mathrm{F}_{\mu},\varepsilon \frac{3 \underline{u}_1 + \underline{u}_2}{2}+1] & 0 \\ 0 & [\mathrm{F}_{\mu},\varepsilon \frac{ \underline{u}_1 +3 \underline{u}_2}{2}-1] \end{pmatrix} = \varepsilon \begin{pmatrix} [\mathrm{F}_{\mu}, \frac{3 \underline{u}_1 + \underline{u}_2}{2}] & 0 \\ 0 & [\mathrm{F}_{\mu}, \frac{ \underline{u}_1 +3 \underline{u}_2}{2}] \end{pmatrix}.
        \end{align*}
    
        Hence
        \begin{align}\label{equation norm U energy estimates order 0}
            \frac{1}{2} \frac{\mathrm{d}}{\dt} |U|_2^2 = \frac{1}{2}(\mathrm{F}_{\mu}U, (\partial_x A(\underline{U}))U)_2 + \frac{1}{2} ([\mathrm{F}_{\mu},A(\underline{U})]\partial_x U, U)_2+ \varepsilon (R,U)_2.
        \end{align}
        Then using Cauchy-Schwarz inequality we get
        \begin{align*}
            &|U|_2 \frac{\mathrm{d}}{\dt} |U|_2 \leq \frac{1}{2} |\partial_x(A(\underline{U}))U|_2|U|_2 + \frac{1}{2} |[\mathrm{F}_{\mu},A(\underline{U})]\partial_x U|_2 |U|_2 + \varepsilon |R|_2 |U|_2 \\
            \iff & \frac{\mathrm{d}}{\dt} |U|_2 \leq \frac{1}{2} |\partial_x(A(\underline{U}))U|_2 + \frac{1}{2} |[\mathrm{F}_{\mu},A(\underline{U})]\partial_x U|_2 + \varepsilon |R|_2.
        \end{align*}
        But by product estimates \ref{product estimate} we have
        \begin{align}\label{estimates on partial x A}
            |\partial_x (A(\underline{U}))U|_2 \leq \varepsilon C(|\underline{U}|_{L^{\infty}_t H^{t_0+1}_x}) |U|_2.
        \end{align}
        Moreover by commutator estimates \ref{Commutator estimates 2.0} we also have
        \begin{align}\label{estimates on commutator F and A}
            |[\mathrm{F}_{\mu},A(\underline{U})]\partial_x U|_2 \leq \varepsilon C(|\underline{U}|_{L^{\infty}_t H^{t_0+1}_x}) |U|_2.
        \end{align}
        So that
        \begin{align*}
            \frac{\mathrm{d}}{\dt} |U|_2 &\leq \varepsilon C(|\underline{U}|_{L^{\infty}_t H^{t_0+1}_x}) |U|_2 + \varepsilon |R|_2 \leq \varepsilon \delta_0 |U|_2 + \varepsilon|R|_2,
        \end{align*}
        where $\delta_0 = C(|\underline{U}|_{L^{\infty}_t H^{t_0+1}_x})$.
        
        We integrate this inequality in time
        \begin{align*}
            |U|_2 &\leq e^{\varepsilon \delta_0 t}|U(0)|_2 + \varepsilon \int_{0}^{t} e^{ \varepsilon \delta_0 (t-t')} |R|_2 \dt' \\
            &\leq e^{\varepsilon \delta_0 t}|U(0)|_2 + \varepsilon e^{\delta_0 T} \int_{0}^{t}  |R|_2 \dt'.
        \end{align*}
        It is the thesis with $\gamma_0 = e^{\delta_0 T}$.
       
    \end{proof}
    Using this lemma, we get the energy estimates of order $\beta$.
    \begin{mylem}\label{Energy estimates of order beta}(Energy estimates of order $\beta$)
        For all times $t \in [0,\frac{T}{\varepsilon}]$,
         \begin{align*}
            |U(t)|_{H^{\beta}} \leq e^{\varepsilon \delta_{\beta} t}|U(0)|_{H^{\beta}} + \varepsilon \gamma_{\beta} \int_{0}^{t} |R(t')|_{H^{\beta}} \dt',
        \end{align*}
        where $\delta_{\beta}, \gamma_{\beta} = C(T,|\underline{U}|_{L^{\infty}([0,\frac{T}{\varepsilon}]; H^{\max{(\beta,t_0+1)}})})$.
    \end{mylem}
    \begin{proof}
        Let $\Lambda^{\beta} \colonequals (1-\partial_x^2)^{\beta/2}$. Then $|\Lambda^{\beta}U|_2$ is equivalent to $|U|_{H^{\beta}}$. Applying $\Lambda^\beta$ to \eqref{generic equation for energy estimates} we have  
        \begin{align*}
            &\Lambda^{\beta}\partial_t U + \Lambda^{\beta}[A(\underline{U})\mathrm{F}_{\mu}\partial_x U] = \varepsilon \Lambda^{\beta}R \\
            \iff &\partial_t \Lambda^{\beta} U + A(\underline{U})\mathrm{F}_{\mu}\partial_x \Lambda^{\beta} U = \varepsilon \Lambda^{\beta}R + \varepsilon B_{\beta}(\underline{U})\mathrm{F}_{\mu}\partial_x U,
        \end{align*}
        where 
        \begin{align*}
            B_{\beta}(\underline{U}) \colonequals \frac{1}{\varepsilon}\begin{pmatrix} [\Lambda^{\beta},\varepsilon \frac{3 \underline{u}_1 + \underline{u}_2}{2}+1] & 0 \\ 0 & [\Lambda^{\beta},\varepsilon \frac{ \underline{u}_1 +3 \underline{u}_2}{2}-1] \end{pmatrix} = \begin{pmatrix} [\Lambda^{\beta},\frac{3 \underline{u}_1 + \underline{u}_2}{2}] & 0 \\ 0 & [\Lambda^{\beta}, \frac{ \underline{u}_1 +3 \underline{u}_2}{2}] \end{pmatrix}.
        \end{align*}
        Using commutator estimates \ref{Commutator estimates 2.0} and the boundedness of $\mathrm{F}_{\mu}$ we get
        \begin{align*}
            |B_{\beta}(\underline{U})\mathrm{F}_{\mu}\partial_x U|_2 \lesssim |\underline{U}|_{L^{\infty}_t H^{\max{(t_0+1, \beta)}}_x}|U|_{H^{\beta}}.
        \end{align*}
        Then we use the energy estimates of order 0 (see Lemma \ref{Energy estimates of order 0}) to get 
        \begin{align*}
            |U|_{H^{\beta}} &\leq e^{\varepsilon \delta_0 t}|U(0)|_{H^{\beta}} + \varepsilon \gamma_0 \int_{0}^{t} |R|_{H^{\beta}} + \varepsilon \gamma_0 \int_0^t |B_{\beta}(\underline{U})\partial_x U|_2 \dt' \\
            &\lesssim e^{\varepsilon \delta_0 t}|U(0)|_{H^{\beta}} + \varepsilon C(T,|\underline{U}|_{L^{\infty}_t H^{\max{(t_0+1, \beta)}}_x}) \int_{0}^{t} |U|_{H^{\beta}} \dt'+ \varepsilon \gamma_0 \int_{0}^{t} |R|_{H^{\beta}} \dt'.
        \end{align*}
        It only remains to use Grönwall's lemma to get the result.
    \end{proof}
    This concludes the proof of Proposition \ref{local existence}.
\end{proof}

\begin{mycoro}\label{persistence of smallness}
    Let $t_0 > 1/2$ and $\alpha \geq t_0 +1$. Let $U_e = \begin{pmatrix} u_e^+ \\ u_e^- \end{pmatrix} \in \mathcal{C}([0,\frac{T}{\varepsilon}],H^{\alpha}(\mathbb{R})^2)$ solves the exact diagonalized system \eqref{Diagonalized WB equations} with $u^-_e(0) = O(\mu)$, then for all times $[0,\frac{T}{\varepsilon}]$, $u^-_e = O(\mu)$. If instead we take $u^+_e(0) = O(\mu)$ then for all times $[0,\frac{T}{\varepsilon}]$, $u^+_e = O(\mu)$.
\end{mycoro}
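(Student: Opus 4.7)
The second equation of the exact diagonalized system \eqref{Diagonalized WB equations} is a scalar nonlocal transport equation
\[\partial_t u_e^- + a \, \mathrm{F}_\mu \partial_x u_e^- = 0, \qquad a := -1 + \tfrac{\varepsilon}{2}(u_e^+ + 3 u_e^-),\]
which is linear and homogeneous in $u_e^-$ when $a$ is regarded as a given coefficient. My plan is to apply the same $H^\alpha$ energy estimate as in Lemma \ref{Energy estimates of order beta} to this scalar equation, to obtain an exponential bound of the form $|u_e^-(t)|_{H^\alpha} \leq e^{\varepsilon C t}|u_e^-(0)|_{H^\alpha}$, and then to conclude by evaluating at $t \leq T/\varepsilon$ and invoking the assumption $u_e^-(0) = O(\mu)$.

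\textbf{Energy identity.} Applying $\Lambda^\alpha := (1-\partial_x^2)^{\alpha/2}$, taking the $L^2$ inner product with $\Lambda^\alpha u_e^-$, using the self-adjointness of $\mathrm{F}_\mu$ (its symbol is real and even), and integrating by parts in $x$, the skew-symmetric piece associated with $a\,\mathrm{F}_\mu \partial_x$ cancels modulo three correction terms of the form $(\Lambda^\alpha u_e^-, \mathrm{F}_\mu((\partial_x a) \Lambda^\alpha u_e^-))_2$, $(\Lambda^\alpha u_e^-, [\mathrm{F}_\mu,a]\partial_x \Lambda^\alpha u_e^-)_2$, and $([\Lambda^\alpha, a]\mathrm{F}_\mu \partial_x u_e^-, \Lambda^\alpha u_e^-)_2$. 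Since $a+1 = \tfrac{\varepsilon}{2}(u_e^+ + 3 u_e^-) = O(\varepsilon)$ in $H^\alpha$ uniformly (thanks to Proposition \ref{local existence}), the constant $-1$ contributes nothing to any of these three expressions, and each of them is controlled by $\varepsilon \, C(|U_e|_{L^\infty_t H^\alpha_x}) \, |u_e^-|_{H^\alpha}^2$ via product estimates, Lemma \ref{commutator estimates}, and the commutator estimate already invoked in the proof of Lemma \ref{Energy estimates of order beta}.

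\textbf{Conclusion.} Collecting these bounds yields
\[\tfrac{d}{dt}|u_e^-|_{H^\alpha} \leq \varepsilon \, C(|U_e|_{L^\infty_t H^\alpha_x}) \, |u_e^-|_{H^\alpha},\]
and Grönwall's lemma, together with the uniform (in $(\mu,\varepsilon)$) bound on $|U_e|_{L^\infty_t H^\alpha_x}$ delivered by Proposition \ref{local existence}, gives $|u_e^-(t)|_{H^\alpha} \leq e^{CT} |u_e^-(0)|_{H^\alpha}$ for all $t \in [0, T/\varepsilon]$, with $C$ independent of $(\mu,\varepsilon) \in \mathcal{A}$. The assumption $u_e^-(0) = O(\mu)$ therefore forces $u_e^- = O(\mu)$ on the entire interval. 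The statement for $u_e^+$ follows by the symmetric argument applied to the first equation of \eqref{Diagonalized WB equations}. The key conceptual point — and the only real subtlety — is recognizing that the equation for $u_e^-$ contains \emph{no} source term of order $O(1)$: it is genuinely homogeneous in $u_e^-$, so initial smallness is propagated on the long time scale $T/\varepsilon$.
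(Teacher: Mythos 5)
Your proof is correct and follows essentially the paper's route: the paper simply invokes the order-$\alpha$ energy estimate (Lemma~\ref{Energy estimates of order beta}) on the scalar equation for $u_e^-$ viewed with $R=0$ and $\underline{U}=U_e$, which is exactly what you carry out in detail. Your remark that the $-1$ in the coefficient $a$ drops out of all commutator/correction terms, leaving only $O(\varepsilon)$ contributions, is precisely the mechanism that makes the estimate in Lemma~\ref{Energy estimates of order beta} produce the factor $\varepsilon$ compensating the $\varepsilon^{-1}$ time scale.
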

\begin{proof}
    Using energy estimates of Lemma \ref{Energy estimates of order beta} on the equation on $u^-_e$ of system \eqref{Diagonalized WB equations}, we get
    \begin{align*}
        |u_e^-|_{H^{\alpha}} \leq C(T,|U_e|_{L^{\infty}_t H^{\alpha}_x}) |u_e^-(0)|_{H^{\alpha}}. 
    \end{align*}
    It gives the result on $u_e^-$. We do the same for $u_e^+$ with the other equation of \eqref{Diagonalized WB equations}.
\end{proof}

\begin{mypp}(Stability)\label{Stability}
Let $t_0 > 1/2$ and $\alpha \geq t_0 +1$. Let $U_e = \begin{pmatrix} u_e^+ \\ u_e^- \end{pmatrix} \in \mathcal{C}([0,\frac{T}{\varepsilon}],H^{\alpha}(\mathbb{R})^2)$ solve the exact diagonalized system \eqref{Diagonalized WB equations}
and $U = \begin{pmatrix} u^+ \\ u^- \end{pmatrix} \in \mathcal{C}([0,\frac{T}{\varepsilon}],H^{\alpha}(\mathbb{R})^2)$, defined from sufficiently regular solutions of the water waves equations satisfying the non-cavitation hypothesis \eqref{Non-Cavitation Hypothesis} through the formulas \eqref{Riemann invariants}. By Proposition \ref{Diag WB section 2}, we have
\begin{align}\label{two equations}
    \begin{cases}
        \partial_t U_e + A(U_e) \mathrm{F}_{\mu}\partial_x U_e = 0, \\
        \partial_t U + A(U) \mathrm{F}_{\mu}\partial_x U = \mu\varepsilon R, 
    \end{cases}
\end{align}
where $A$ is defined by \eqref{expression of A} and $R \in L^{\infty}([0,\frac{T}{\varepsilon}],H^{\alpha}(\mathbb{R})^2)$ is uniformly bounded in $(\mu,\varepsilon) \in \mathcal{A}$.

Then for all times $t \in [0,\frac{T}{\varepsilon}]$, we have
\begin{align}\label{Stability inequality}
    |U_e -U|_{H^{\alpha}} \leq c_{\alpha}(|U_e(0) - U(0)|_{H^{\alpha}} + \mu\varepsilon t |R|_{L^{\infty}_t H^{\alpha}_x}),
\end{align}
where $c_{\alpha} = C(T,|U_e|_{L^{\infty}_t H^{\alpha}_x},|U|_{L^{\infty}_t H^{\alpha+1}_x})$.

In particular, if we take $U_e(0) = U(0)$ and $u^-(0) = O(\mu)$, then for all times $t \in [0,\frac{T}{\epsilon}]$,
\begin{align*}
    u^- = O(\mu).
\end{align*}
If instead, we take $u^+(0) = O(\mu)$, then for all times $t \in [0,\frac{T}{\epsilon}]$,
\begin{align*}
    u^+ = O(\mu).
\end{align*} 
\end{mypp}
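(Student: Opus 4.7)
The plan is to study the difference $V \colonequals U_e - U$, write the linear equation it satisfies, and apply the energy estimate of Lemma \ref{Energy estimates of order beta} together with Grönwall's lemma. Subtracting the two equations of \eqref{two equations} yields
\begin{align*}
    \partial_t V + A(U_e)\mathrm{F}_{\mu}\partial_x V = (A(U) - A(U_e))\mathrm{F}_{\mu}\partial_x U - \mu\varepsilon R.
\end{align*}
The key algebraic observation is that, by the explicit expression \eqref{expression of A}, $A$ is affine in its argument with its $\varepsilon^0$-part constant, so $A(U) - A(U_e) = -\varepsilon M(V)$ for some linear matrix-valued map $M$. Consequently the equation for $V$ takes the form $\partial_t V + A(U_e)\mathrm{F}_{\mu}\partial_x V = \varepsilon \widetilde{R}$ with $\widetilde{R} \colonequals -M(V)\mathrm{F}_{\mu}\partial_x U - \mu R$, which fits exactly the framework of Lemma \ref{Energy estimates of order beta} with $\underline{U} = U_e$.

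I would then apply the $H^{\alpha}$ energy estimate of that lemma to $V$ and bound the two contributions of $\widetilde{R}$ separately. Product estimates together with the $H^{\beta}$-boundedness of $\mathrm{F}_{\mu}$ give
\begin{align*}
    |M(V)\mathrm{F}_{\mu}\partial_x U|_{H^{\alpha}} \lesssim |V|_{H^{\alpha}}\,|U|_{H^{\alpha+1}},
\end{align*}
while the remaining term contributes $\mu|R|_{H^{\alpha}}$. Plugging these into
\begin{align*}
    |V(t)|_{H^{\alpha}} \leq e^{\varepsilon \delta_{\alpha} t}|V(0)|_{H^{\alpha}} + \varepsilon \gamma_{\alpha} \int_{0}^{t} |\widetilde{R}(t')|_{H^{\alpha}}\dt'
\end{align*}
and invoking Grönwall's lemma to absorb the term linear in $|V|_{H^{\alpha}}$ sitting under the time integral yields, for $t \in [0,T/\varepsilon]$, exactly the bound \eqref{Stability inequality} with a constant $c_{\alpha}$ depending on $T$, $|U_e|_{L^{\infty}_t H^{\alpha}_x}$ and $|U|_{L^{\infty}_t H^{\alpha+1}_x}$ (the extra derivative on $U$ comes from the factor $\mathrm{F}_{\mu}\partial_x U$ in $\widetilde{R}$).

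For the two final statements, assume $U_e(0) = U(0)$ and $u^-(0) = O(\mu)$. Then \eqref{Stability inequality} reduces to $|U - U_e|_{H^{\alpha}} \leq c_{\alpha}\,\mu\varepsilon t\,|R|_{L^{\infty}_t H^{\alpha}_x} = O(\mu)$ uniformly on $[0,T/\varepsilon]$; in particular $u^- = u^-_e + O(\mu)$. Since $u^-_e(0) = u^-(0) = O(\mu)$ by hypothesis, Corollary \ref{persistence of smallness} applied to the exact solution $U_e$ gives $u^-_e(t) = O(\mu)$ on $[0,T/\varepsilon]$, whence $u^- = O(\mu)$ on the same interval. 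The case $u^+(0) = O(\mu)$ is strictly symmetric. The only subtlety I anticipate is bookkeeping: one must extract the $\varepsilon$ prefactor from $A(U) - A(U_e)$ before applying the product estimate, so that $V$ appears with no spurious $1/\varepsilon$ and Grönwall can be closed uniformly in $(\mu,\varepsilon) \in \mathcal{A}$; beyond this, everything reduces to the commutator and product estimates already used in Subsection~\ref{Formal diagonalization of the Whitham-Boussinesq equation}.
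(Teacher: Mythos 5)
Your proposal is correct and follows essentially the same route as the paper: subtract the two equations to get a linear evolution for $V=U_e-U$ with source $-\varepsilon\bigl(\mu R + \tfrac{1}{\varepsilon}(A(U_e)-A(U))\mathrm{F}_{\mu}\partial_x U\bigr)$, bound the commutator-free nonlinear term via product estimates using the affine structure of $A$ (which is exactly where the factor $\varepsilon$ is extracted, as you note), apply the energy estimate of Lemma \ref{Energy estimates of order beta} with $\underline{U}=U_e$, and close with Grönwall; the final smallness assertions follow from the triangle inequality and Corollary \ref{persistence of smallness}, just as in the paper.
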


\begin{proof}
    First we subtract the two equations of \eqref{two equations}, we get
    \begin{align*}
         \partial_t (U_e - U) + A(U_e) \mathrm{F}_{\mu}\partial_x (U_e - U) 
         = -\varepsilon (\mu R + \frac{1}{\epsilon}(A(U_e) - A(U))\mathrm{F}_{\mu}\partial_x U).
    \end{align*}
    We use the estimates of order $\alpha$ from Lemma \ref{Energy estimates of order beta}
    \begin{align*}
        |U_e - U|_{H^{\alpha}} \leq &e^{\varepsilon \delta_{\alpha} t}|U_e(0) - U(0)|_{H^{\alpha}} + \varepsilon \gamma_{\alpha} \int_{0}^{t} |\mu R(t')|_{H^{\alpha}} \dt' \\
        &+ \varepsilon \gamma_{\alpha} \int_0^t \frac{1}{\epsilon}|(A(U_e)-A(U))\mathrm{F}_{\mu}\partial_x U|_{H^{\alpha}} \dt'. 
    \end{align*}
    But using product estimates \ref{product estimate}, the algebra properties of $H^{\alpha}(\mathbb{R})$ and the boundedness of $\mathrm{F}_{\mu}$, we have
    \begin{align*}
        |(A(U_e)-A(U))\mathrm{F}_{\mu}\partial_x U|_{H^{\alpha}} \lesssim \epsilon |U_e - U|_{H^{\alpha}} |U|_{H^{\alpha +1}}.
    \end{align*}
    So that using Grönwall's lemma we get 
    \begin{align*}
        |U_e - U|_{H^{\alpha}} &\leq \Big[ e^{\varepsilon \delta_{\alpha} t}|U_e(0)- U(0)|_{H^{\alpha}} + \varepsilon \gamma_{\alpha} \int_{0}^{t} |\mu R(t')|_{H^{\alpha}} \dt' \Big] e^{\varepsilon \gamma_{\alpha} \int_0^t |U|_{H^{\alpha +1}} \dt'} \\
        &\leq C(T,|U_e|_{L^{\infty}_t H^{\alpha}_x})(|U_e(0) - U(0)|_{H^{\alpha}} + \mu\varepsilon t |R|_{L^{\infty}_t H^{\alpha}_x})e^{\varepsilon \gamma_{\alpha}t |U|_{L^{\infty}_t H^{\alpha+1}_x}},
    \end{align*}
    which is the first part of the result.
    
    Now if we suppose $U_e(0) = U(0)$ and $u^-(0) = O(\mu)$, then for all times $t \in [0,\frac{T}{\epsilon}]$, we have
    \begin{align*}
        |u^-|_{H^{\alpha}} \leq |u_e^- - u^-|_{H^{\alpha}} + |u_e^-|_{H^{\alpha}} \leq \mu c_{\alpha} T |R|_{L^{\infty}_t H^{\alpha}_x} + |u_e^-|_{H^{\alpha}}. 
    \end{align*}
    Using Corollary \ref{persistence of smallness} we get the result. We do the same if $u^+(0) = O(\mu)$.
\end{proof}

We now have all the elements to prove Proposition \ref{First main result}. 
\begin{proof}
    Supposing $u^-(0) = O(\mu)$, we have for all times $t \in [0,\frac{T}{\varepsilon}]$, $\varepsilon u^-(t) = O(\mu\varepsilon)$ (see Proposition \ref{Stability}). But $u^-$ and $u^+$ solve the first equation of \eqref{Almost diagonalized WB system}. So 
    \begin{align}\label{proof first main result}
        \partial_t u^+ + \mathrm{F}_{\mu}\partial_x u^+ + \frac{3\varepsilon}{2}u^+\partial_x u^+ = O(\mu\varepsilon). 
    \end{align}
    The last equivalence coming from product estimates \ref{product estimate} and $|(\mathrm{F}_{\mu}-1)[u]|_{H^{\beta}} \lesssim \mu |u|_{H^{\beta}}$ for any $\beta \geq 0$ and $u$ in $H^{\beta+2}(\mathbb{R})$.
    
    Supposing instead $u^+(0) = O(\mu)$, we have for all times $t \in [0,\frac{T}{\varepsilon}]$, $\varepsilon u^+(t) = O(\mu\varepsilon)$. Then, the second equation of \eqref{Almost diagonalized WB system} gives
    \begin{align*}
        \partial_t u^- - \mathrm{F}_{\mu}\partial_x u^- + \frac{3\varepsilon}{2}u^-\partial_x u^- = O(\mu\varepsilon).
    \end{align*}
\end{proof}

It only remains to prove Proposition \ref{Corollary convergence introduction}.
\begin{proof}
    Let $U_e = \begin{pmatrix} u_e^+ \\ u_e^- \end{pmatrix} \in \mathcal{C}([0,\frac{T}{\varepsilon}],H^{\alpha}(\mathbb{R})^2)$ solve the system \eqref{Diagonalized WB equations}, with the initial data $u^-_{e,0} = O(\mu)$ and $u^+_{e,0}$, defined by $(\zeta_0,\psi_0)$ through the formulas \eqref{Diagonalized system introduction}. By the energy estimates of Lemma \ref{Energy estimates of order beta} applied on the second equation of system \eqref{Diagonalized WB equations}, we get for all times $t \in [0,\frac{T}{\varepsilon}]$,
    \begin{align*}
        |u^-_e|_{H^{\alpha}} \leq C|u_e^-(0)|_{H^{\alpha}},
    \end{align*}
    where $C,T^{-1} = C(\frac{1}{h_{\min}}, \mu_{\max}, |\zeta_0|_{H^{\alpha+n}}, |\psi_0|_{\dot{H}^{\alpha+n+1}})$.
    So that $u^+ \in \mathcal{C}([0,\frac{T}{\varepsilon}],H^{\alpha}(\mathbb{R}))$, solution of the Whitham equation (which is well-posed in the Sobolev space $H^{\alpha}(\mathbb{R})$ \cite{EhrestromEscherPei2015})
    \begin{align*}
        \partial_t u^+ + \mathrm{F}_{\mu}\partial_x u^+ + \frac{3\varepsilon}{2} u^+ \partial_x u^+ = 0,
    \end{align*}
    satisfies (see \eqref{proof first main result})
    \begin{align*}
        \partial_t u^+ + (\varepsilon\frac{3 u^+ + u^-_e}{2}+1) \mathrm{F}_{\mu}\partial_x u^+ = \mu\varepsilon R,
    \end{align*}
    where $R$ is uniformly bounded in $(\mu,\varepsilon)$ in $H^{\alpha}(\mathbb{R})$. Using the stability estimates \eqref{Stability inequality}, with $U = \begin{pmatrix} u^+ \\ u^-_e \end{pmatrix}$ and $U_e(0) = U(0)$, we have for all times $t \in [0,\frac{T}{\varepsilon}]$,
    \begin{align*}
        |u^+_e - u^+|_{H^{\alpha}} \leq C \mu\varepsilon t,
    \end{align*}
    where $C,T^{-1} = C(\frac{1}{h_{\min}}, \mu_{\max}, |\zeta_0|_{H^{\alpha+n}}, |\psi_0|_{\dot{H}^{\alpha+n+1}})$.
    
    Thus by Corollary \ref{First corollary introduction} we have for all times $t \in [0,\frac{T}{\varepsilon}]$,
    \begin{align*}
        |(\zeta - \zeta_{\Wh,+},\psi - \psi_{\Wh,+})|_{H^{\alpha}\times \dot{H}^{\alpha+1}} \leq &|(\zeta - \zeta_{\rm{c}},\psi - \psi_{\rm{c}})|_{H^{\alpha}\times \dot{H}^{\alpha+1}} \\
        + &|(\zeta_{\rm{c}} - \zeta_{\Wh,+},\psi_{\rm{c}} - \psi_{\Wh,+})|_{H^{\alpha}\times \dot{H}^{\alpha+1}} \\
        \leq &C(|u_e^-|_{H^{\alpha+1}} + \mu\varepsilon t) \\
        \leq &C(|u_{e,0}^-|_{H^{\alpha+1}} + \mu \varepsilon t),
    \end{align*}
    where $C,T^{-1} = C(\frac{1}{h_{\min}}, \mu_{\max}, |\zeta_0|_{H^{\alpha+n}}, |\psi_0|_{\dot{H}^{\alpha+n+1}})$.
    
    The reasoning is the same if instead $u_e^+(0) = O(\mu)$.
\end{proof}

\label{From the diagonalized Whitham-Boussinesq system to the Whitham equations}

\section{Decoupling the water waves equations into two Whitham equations}
\label{Decoupling the water waves equations into two Whitham equations}
The goal of this section is to prove Theorem \ref{Final theorem introduction}.
In the previous section, we proved the consistency (see Proposition \ref{Consistency of the WB equations} for the definition) of the water waves equations with the Whitham equations at a precision order $O(\mu\varepsilon)$ in the shallow water regime $\mathcal{A} \colonequals \{ 0\leq \mu \leq \mu_{\max} , 0\leq \varepsilon \leq 1 \}$ (see Proposition \ref{Diag WB section 2}). For that purpose we made a restrictive hypothesis on the initial conditions. We supposed either $u^-(0) = O(\mu)$ or $u^+(0) = O(\mu)$ (see Notation \ref{Big O Sobolev spaces}). In this section we use a generalisation of Birkhoff's normal form algorithm for almost smooth Hamiltonians, introduced by Bambusi in \cite{Bambusi20}, on the water waves Hamiltonian. It will allow us to decouple the water waves equations into two Whitham equations satisfied by the two front waves, at a precision order $O(\mu\varepsilon + \varepsilon^2)$ in the shallow water regime, without any assumption of smallness on the initial conditions. 

We will only recall the elements of Bambusi's theory which are useful to understand our reasoning. For more details see \cite{Bambusi20}. 

In this section we always suppose the parameters $(\mu,\varepsilon)$ in $\mathcal{A}$. Moreover every $\alpha$ will be a positive integer.  

\subsection{From the water waves equations to a Whitham-Boussinesq system}
The starting point is the Hamiltonian of the water waves system
\begin{align}\label{HamiltonianWW}
    H_{\WW} = \frac{1}{2} \int_{\mathbb{R}} \zeta^2 \dx + \frac{1}{2} \int_{\mathbb{R}} \psi \frac{1}{\mu} \mathcal{G}^{\mu}[\varepsilon\zeta]\psi \dx.
\end{align}
We use the first shallow water estimates of Proposition 1.8 in \cite{Emerald2020}, which we recall here for the sake of clarity.
\begin{mypp}\label{shallow water approximation of the Dirichlet-Neumann operator}
    Let $\alpha \geq 0$, and $\zeta \in H^{\alpha+4}(\mathbb{R})$ be such that \eqref{Non-Cavitation Hypothesis} is satisfied. Let $\psi \in \dot{H}^{\alpha+3}(\mathbb{R})$, then
    \begin{align*}
        |\frac{1}{\mu}\mathcal{G}^{\mu}[\varepsilon\zeta]\psi + \partial_x(h \mathrm{F}_{\mu}^2[\nabla\psi])|_{H^{\alpha}} \leq \mu \varepsilon M(\alpha+4)|\psi|_{\dot{H}^{\alpha+4}},
    \end{align*}
    where $h = 1+\varepsilon\zeta$ and for any $\beta \geq 2 $, $M(\beta) \colonequals C(\frac{1}{h_{\min}},\mu_{\max},|\zeta|_{H^{\beta}})$.
\end{mypp}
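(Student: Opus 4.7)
The plan is to anchor the argument on the fact that when the free surface is flat one can compute the Dirichlet-Neumann operator explicitly. For $\zeta \equiv 0$, Fourier separation of variables in the elliptic problem of Definition \ref{Dirichlet to Neumann operator} gives $\phi(x,z) = \frac{\cosh(\sqrt{\mu}|D|(z+1))}{\cosh(\sqrt{\mu}|D|)}\psi(x)$, and reading off the normal derivative at $z = 0$ yields the exact identity
\begin{equation*}
    \frac{1}{\mu}\mathcal{G}^{\mu}[0]\psi = -\partial_x\bigl(\mathrm{F}_{\mu}^2[\partial_x\psi]\bigr).
\end{equation*}
This is the correct point of departure: it shows that the Whitham-Boussinesq approximation is exact when $\zeta = 0$, so that the residual in the proposition vanishes in that case and has a chance of being $O(\mu\varepsilon)$ for general $\zeta$.

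For the general case I would flatten the fluid domain $\{-1 < z < \varepsilon\zeta(x)\}$ by the diffeomorphism $\Sigma(x,y) = (x,(1+\varepsilon\zeta(x))(y+1)-1)$, transporting the elliptic problem onto the fixed strip $S = \mathbb{R}\times(-1,0)$. The pulled back potential $\widetilde\phi = \phi\circ\Sigma$ then solves a variable coefficient elliptic problem of the form $\nabla^{\mu}\cdot\bigl(P[\varepsilon\zeta]\nabla^{\mu}\widetilde\phi\bigr) = 0$ on $S$, with anisotropic gradient $\nabla^{\mu} = (\sqrt{\mu}\partial_x,\partial_y)^T$ and a coercive symmetric matrix $P[\varepsilon\zeta]$ that depends smoothly on $\varepsilon\zeta$ and reduces to the identity when $\zeta = 0$. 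The idea is then to construct an explicit approximate solution $\widetilde\phi_{\mathrm{app}}$ of this problem, built from the flat-case formula and its first order correction in $\varepsilon$ via the shape derivative of $\mathcal{G}^{\mu}$, such that the image of its normal trace at $y = 0$ under the chain rule reproduces $-\partial_x\bigl(h\,\mathrm{F}_{\mu}^2[\partial_x\psi]\bigr)$ up to an error that is already $O(\mu\varepsilon)$ in $H^{\alpha}$. Setting $\widetilde\phi = \widetilde\phi_{\mathrm{app}} + \widetilde\phi_{\mathrm{r}}$, the remainder $\widetilde\phi_{\mathrm{r}}$ satisfies an elliptic problem on $S$ with zero Dirichlet data at $y = 0$, zero Neumann data at $y = -1$, and a source whose natural anisotropic norm is controlled by $\mu\varepsilon\,M(\alpha+4)\,|\psi|_{\dot H^{\alpha+4}}$.

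A uniform-in-$\mu$ variational argument on the strip with the anisotropic gradient $\nabla^{\mu}$, iterated on $x$-derivatives and combined with product estimates to commute through $P[\varepsilon\zeta]$, then converts this bound on the source into an $H^{\alpha}$ bound on the trace at $y = 0$, which up to geometric factors of order one equals the left hand side of the proposition. I expect the main obstacle to be the construction of $\widetilde\phi_{\mathrm{app}}$: to reproduce the nonlocal operator $\mathrm{F}_{\mu}^2$ exactly rather than only up to polynomial powers of $\mu$, one must anchor the expansion at the flat-surface solution (not at the classical polynomial shallow water ansatz) and handle the $\varepsilon\zeta$ perturbation via the shape derivative of the Dirichlet-Neumann operator, all while keeping estimates uniform for $\mu \in [0,\mu_{\max}]$ by carefully exploiting the anisotropic scaling. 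The absorption of the $\zeta$-dependence of $P[\varepsilon\zeta]$ into a constant of the form $M(\alpha+4)$ is then a routine consequence of the product and tame composition estimates used throughout the paper.
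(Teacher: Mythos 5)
The paper does not actually prove this proposition: it is stated as a recall of Proposition 1.8 of the cited reference \cite{Emerald2020}, so there is no proof in the present paper to compare against. Your sketch is nonetheless plausible as an outline of how such a result is proven in the water waves literature, and you correctly identify the essential structural anchor: the flat-surface identity $\frac{1}{\mu}\mathcal{G}^{\mu}[0]\psi = -\partial_x(\mathrm{F}_{\mu}^2[\partial_x\psi])$, which forces the residual to vanish identically at $\zeta = 0$ and motivates building the approximate potential from the full hyperbolic-cosine profile rather than its polynomial truncation. The domain-flattening change of variables, the anisotropic gradient $\nabla^{\mu}$, and the variational estimates on the strip are exactly the standard machinery (cf.\ \cite{WWP}, Chapter 3).

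That said, there is a real gap where your sketch leans on ``the shape derivative of $\mathcal{G}^{\mu}$'' to handle the $\varepsilon\zeta$ perturbation. Linearizing $\frac{1}{\mu}\mathcal{G}^{\mu}[\varepsilon\zeta]\psi$ in $\varepsilon$ about $\zeta=0$ produces a Taylor remainder that is a priori $O(\varepsilon^2)$, not $O(\mu\varepsilon)$; since the proposition allows $\varepsilon = 1$ and $\mu\to 0$, an $O(\varepsilon^2)$ term is not absorbed into $\mu\varepsilon\,M(\alpha+4)$. The reason the residual is genuinely $O(\mu\varepsilon)$ is a double cancellation: the target $-\partial_x(h\,\mathrm{F}_{\mu}^2[\partial_x\psi])$ is \emph{affine} in $\varepsilon\zeta$ and equals $\frac{1}{\mu}\mathcal{G}^{\mu}[\varepsilon\zeta]\psi$ exactly when $\mu = 0$ (both reduce to $-\partial_x(h\,\partial_x\psi)$) as well as when $\zeta = 0$. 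To exploit this one cannot simply Taylor-expand in $\varepsilon$; one must either (a) show the second shape derivative of $\frac{1}{\mu}\mathcal{G}^{\mu}[\cdot]\psi$ is itself $O(\mu)$ uniformly over the regime $\mathcal{A}$ — a nontrivial uniform estimate in its own right — or, as is actually done, (b) build the $h$-dependence directly into the approximate potential on the flattened strip (so that $\phi_{\mathrm{app}}$ becomes exact as $\mu\to 0$ for every $\varepsilon$, not just at $\varepsilon=0$), and then observe that the elliptic source $\nabla^{\mu}\cdot\bigl((P[\varepsilon\zeta]-I)\nabla^{\mu}\phi_{\mathrm{app}}\bigr)$ is small because $(P[\varepsilon\zeta]-I)$ is $O(\varepsilon)$ in the geometric sense while the $y$-variation of $\nabla^{\mu}\phi_{\mathrm{app}}$ is additionally $O(\mu)$ in the anisotropic scaling. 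Your write-up gestures at this (``anchor the expansion at the flat-surface solution'') but does not carry out the construction, and as stated the shape-derivative step would overshoot the claimed precision. Making this precise is exactly the content of \cite{Emerald2020}'s Proposition 1.8, which the paper invokes instead of reproving.
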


\begin{mypp}\label{From HBX to HWW}
    Let $H_{\WW}$ be the Hamiltonian of the water waves equations \eqref{HamiltonianWW}. Then for any $\alpha \geq 0$ and any $(\zeta, \psi) \in H^{\alpha+4}(\mathbb{R}) \times \dot{H}^{\alpha+4}(\mathbb{R})$ we have
    \begin{align}\label{comparison between hamiltonians}
        |J\nabla H_{\WW} - J\nabla (H_0 + \varepsilon H_1) |_{H^{\alpha}\times \dot{H}^{\alpha+1}} < \mu\varepsilon M(\alpha+4)|\psi|_{\dot{H}^{\alpha+4}},
    \end{align}
    where $J = \begin{pmatrix} 0 & 1 \\ -1 & 0 \end{pmatrix}$, $H_0 \colonequals \frac{1}{2} \int_{\mathbb{R}} \zeta^2 \dx + \frac{1}{2} \int_{\mathbb{R}} (\mathrm{F}_{\mu}[\partial_x\psi])^2 \dx $ and $ H_1 \colonequals \frac{1}{2} \int_{\mathbb{R}} \zeta (\mathrm{F}_{\mu}[\partial_x \psi])^2 \dx$. 
\end{mypp}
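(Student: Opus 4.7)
The plan is to compute both components of $J\nabla H_{\WW} - J\nabla(H_0 + \varepsilon H_1)$ explicitly, and then to reduce the estimate to a combination of Proposition \ref{shallow water approximation of the Dirichlet-Neumann operator} for the Dirichlet--Neumann operator and the commutator estimates of Lemma \ref{commutator estimates}. For $H_{\WW}$, the first variational derivative $\delta_\psi H_{\WW}$ is $\frac{1}{\mu}\mathcal{G}^{\mu}[\varepsilon\zeta]\psi$ (self-adjointness of $\mathcal{G}^{\mu}$), and $-\delta_\zeta H_{\WW}$ can be read off directly from the second line of \eqref{WaterWavesEquations} since Zakharov's formulation states that \eqref{WaterWavesEquations} is Hamilton's equations for $H_{\WW}$. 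For $H_0 + \varepsilon H_1$, self-adjointness of $\mathrm{F}_{\mu}$ and one integration by parts give $\delta_\psi(H_0 + \varepsilon H_1) = -\partial_x \mathrm{F}_{\mu}^2[\partial_x\psi] - \varepsilon \partial_x \mathrm{F}_{\mu}[\zeta\, \mathrm{F}_{\mu}\partial_x\psi]$ and $\delta_\zeta(H_0 + \varepsilon H_1) = \zeta + \tfrac{\varepsilon}{2}(\mathrm{F}_{\mu}\partial_x\psi)^2$.

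The key algebraic step for the first component of the difference is to add and subtract $\partial_x(h\,\mathrm{F}_{\mu}^2[\partial_x\psi])$, using $h = 1 + \varepsilon\zeta$, so that
\[
 \tfrac{1}{\mu}\mathcal{G}^{\mu}[\varepsilon\zeta]\psi + \partial_x \mathrm{F}_{\mu}^2[\partial_x\psi] + \varepsilon \partial_x \mathrm{F}_{\mu}[\zeta\, \mathrm{F}_{\mu}\partial_x\psi]
 = \Bigl[\tfrac{1}{\mu}\mathcal{G}^{\mu}[\varepsilon\zeta]\psi + \partial_x\bigl(h\,\mathrm{F}_{\mu}^2[\partial_x\psi]\bigr)\Bigr] + \varepsilon \partial_x\bigl([\mathrm{F}_{\mu},\zeta]\mathrm{F}_{\mu}\partial_x\psi\bigr).
\]
The first bracket is exactly the quantity controlled by Proposition \ref{shallow water approximation of the Dirichlet-Neumann operator}, contributing $O(\mu\varepsilon)$ in $H^{\alpha}$ with the loss of four derivatives on $\psi$. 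The second term is where the extra factor of $\mu$ must come from the commutator: Lemma \ref{commutator estimates} applied to the Fourier multiplier $\mathrm{F}_{\mu}$ and the function $\zeta \in H^{\alpha+n}$ yields $|[\mathrm{F}_{\mu},\zeta]\mathrm{F}_{\mu}\partial_x\psi|_{H^{\alpha+1}} \lesssim \mu\, M(\alpha+4)\,|\psi|_{\dot H^{\alpha+4}}$, so applying the outer $\partial_x$ lands the full expression in $H^{\alpha}$ with the required $\mu\varepsilon$ bound.

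The second component of the difference reduces to
\[
 \tfrac{\varepsilon}{2}\bigl((\mathrm{F}_{\mu}\partial_x\psi)^2 - (\partial_x\psi)^2\bigr) + \tfrac{\mu\varepsilon}{2}\frac{\bigl(\tfrac{1}{\mu}\mathcal{G}^{\mu}[\varepsilon\zeta]\psi + \varepsilon \partial_x\zeta \,\partial_x\psi\bigr)^2}{1 + \varepsilon^2\mu(\partial_x\zeta)^2}.
\]
I factor the first summand as $\tfrac{\varepsilon}{2}\bigl((\mathrm{F}_{\mu}-1)\partial_x\psi\bigr)\bigl((\mathrm{F}_{\mu}+1)\partial_x\psi\bigr)$; the pointwise estimate $|F_\mu(\xi)-1| \lesssim \mu(1+|\xi|^2)$ gives a gain of $\mu$ at the cost of two derivatives, i.e.\ $|(\mathrm{F}_{\mu}-1)\partial_x\psi|_{H^{\alpha+1}} \lesssim \mu |\psi|_{\dot H^{\alpha+4}}$, so a product estimate in $H^{\alpha+1}$ (which is an algebra for $\alpha \geq 1$, with $L^\infty$--$L^2$ splitting for $\alpha=0$) yields an $O(\mu\varepsilon)$ bound in the $\dot H^{\alpha+1}$ seminorm. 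The second summand already carries the $\mu\varepsilon$ prefactor; its numerator is uniformly bounded in high-enough Sobolev norms thanks again to Proposition \ref{shallow water approximation of the Dirichlet-Neumann operator} (which gives a uniform estimate on $\tfrac{1}{\mu}\mathcal{G}^{\mu}[\varepsilon\zeta]\psi$), its denominator is $\geq 1$, and the non-cavitation hypothesis \eqref{Non-Cavitation Hypothesis} prevents any degeneracy, so a Moser-type estimate after one derivative yields the $O(\mu\varepsilon)$ bound.

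The main obstacle is less conceptual than combinatorial: the heart of the argument is the commutator identity that converts the apparent $O(\varepsilon)$ mismatch between $\varepsilon\partial_x \mathrm{F}_{\mu}[\zeta\, \mathrm{F}_{\mu}\partial_x\psi]$ and $\varepsilon \partial_x(\zeta\, \mathrm{F}_{\mu}^2\partial_x\psi)$ into a genuine $O(\mu\varepsilon)$ term, and after that one must carefully distribute the four-derivative loss of Proposition \ref{shallow water approximation of the Dirichlet-Neumann operator} across the product and commutator estimates so that the final constant depends linearly on $|\psi|_{\dot H^{\alpha+4}}$ and on $\zeta$ only through $M(\alpha+4)$, as stated.
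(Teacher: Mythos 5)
Your proof is correct and follows essentially the same route as the paper: computing both gradients explicitly, reducing the first-component error to Proposition~\ref{shallow water approximation of the Dirichlet-Neumann operator} plus a commutator of $\mathrm{F}_\mu$ with $\zeta$, and handling the second component by a difference-of-squares factorization together with the bound $|(\mathrm{F}_\mu-1)u|_{H^\beta}\lesssim\mu|u|_{H^{\beta+2}}$ and a Moser estimate on the already-$\mu\varepsilon$-prefactored quotient term. Your single identity $\varepsilon\partial_x\bigl([\mathrm{F}_\mu,\zeta]\mathrm{F}_\mu\partial_x\psi\bigr)$ is algebraically the same as the paper's two-piece splitting $(\mathrm{F}_\mu-1)[\partial_x(\varepsilon\zeta\mathrm{F}_\mu\partial_x\psi)]-\partial_x(\varepsilon\zeta(\mathrm{F}_\mu-1)\mathrm{F}_\mu\partial_x\psi)$, and the only other discrepancy is cosmetic: the paper cites Proposition~\ref{Theorem 3.15} for the uniform bound on $\frac1\mu\mathcal G^\mu[\varepsilon\zeta]\psi$ in the quotient numerator (which you derive instead via triangle inequality from Proposition~\ref{shallow water approximation of the Dirichlet-Neumann operator}), and your caveat about $\alpha=0$ is unnecessary since $H^1(\mathbb R)$ is already an algebra in one dimension.
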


\begin{proof}
   We defined $H_{\WW}$ as the Hamiltonian of the water waves equations, so
   \begin{align}\label{Truc1}
       J \nabla H_{\WW} = \begin{pmatrix} \frac{1}{\mu}\mathcal{G}^{\mu}[\varepsilon\zeta]\psi \\ - \zeta - \frac{\varepsilon}{2}(\partial_x\psi)^2 + \frac{\mu\varepsilon}{2}\frac{(\frac{1}{\mu}\mathcal{G}^{\mu}[\varepsilon\zeta]\psi + \varepsilon\partial_x\zeta\partial_x\psi)^2}{1+\varepsilon^2\mu(\partial_x\zeta)^2} \end{pmatrix}.
   \end{align}
   We can easily compute $J \nabla (H_0 + \varepsilon H_1)$, we get
   \begin{align}\label{Truc2}
       J \nabla (H_0 + \varepsilon H_1) = \begin{pmatrix} -\mathrm{F}_{\mu}^2[\partial_x^2 \psi] - \mathrm{F}_{\mu}[\partial_x(\varepsilon \zeta\mathrm{F}_{\mu}[\partial_x\psi])] \\ -\zeta - \frac{\varepsilon}{2} (\mathrm{F}_{\mu}[\partial_x\psi])^2 \end{pmatrix}.
   \end{align}
   We compare the first term of \eqref{Truc1} and \eqref{Truc2}.
   \begin{align*}
       &|\frac{1}{\mu}\mathcal{G}^{\mu}[\varepsilon\zeta]\psi + \mathrm{F}_{\mu}^2[\partial_x^2 \psi] + \mathrm{F}_{\mu}[\partial_x(\varepsilon \zeta\mathrm{F}_{\mu}[\partial_x\psi])]|_{H^{\alpha}} \\
       \leq &|\frac{1}{\mu}\mathcal{G}^{\mu}[\varepsilon\zeta]\psi + \partial_x(h \mathrm{F}_{\mu}^2[\partial_x \psi])|_{H^{\alpha}} + |(\mathrm{F}_{\mu}-1)[\partial_x(\varepsilon\zeta\mathrm{F}_{\mu}[\partial_x\psi])]|_{H^{\alpha}} + |\partial_x(\varepsilon\zeta (\mathrm{F}_{\mu}-1)\mathrm{F}_{\mu}[\partial_x\psi])|_{H^{\alpha}}.
   \end{align*}
   Hence, using Proposition \ref{shallow water approximation of the Dirichlet-Neumann operator}, the fact that for any $u \in H^{\alpha+2}(\mathbb{R})$, $|(\mathrm{F}_{\mu}-1)[u]|_{H^{\alpha}} \lesssim \mu|u|_{H^{\alpha+2}}$ and product estimates \ref{product estimate}, we get 
   \begin{align*}
       |\frac{1}{\mu}\mathcal{G}^{\mu}[\varepsilon\zeta]\psi + \mathrm{F}_{\mu}^2[\partial_x^2 \psi] + \mathrm{F}_{\mu}[\partial_x(\varepsilon \zeta\mathrm{F}_{\mu}[\partial_x\psi])]|_{H^{\alpha}} \lesssim \mu\varepsilon M(\alpha+4)|\psi|_{\dot{H}^{\alpha+4}}.
   \end{align*}
   Then we compare the second term of \eqref{Truc1} and \eqref{Truc2}. We first remark that there is a part of \eqref{Truc1} which is clearly of order $O(\mu\varepsilon)$, we start by estimate it using quotient estimates \ref{Quotient estimate}, product estimates \ref{product estimate} and Proposition \ref{Theorem 3.15}
   \begin{align*}
       |\frac{(\frac{1}{\mu}\mathcal{G}^{\mu}[\varepsilon\zeta]\psi + \varepsilon\partial_x\zeta\partial_x\psi)^2}{1+\varepsilon^2\mu(\partial_x\zeta)^2}|_{\dot{H}^{\alpha+1}} \leq &|\frac{(\frac{1}{\mu}\mathcal{G}^{\mu}[\varepsilon\zeta]\psi + \varepsilon\partial_x\zeta\partial_x\psi)^2}{1+\varepsilon^2\mu(\partial_x\zeta)^2}|_{H^{\alpha+1}}\\
       \leq &C(\mu_{\max},|(\partial_x\zeta)^2|_{H^{\alpha+1}})|(\frac{1}{\mu}\mathcal{G}^{\mu}[\varepsilon\zeta]\psi + \varepsilon\partial_x\zeta\partial_x\psi)^2|_{H^{\alpha+1}} \\
       \leq &C(\mu_{\max},|\zeta|_{H^{\alpha+2}})|\frac{1}{\mu}\mathcal{G}^{\mu}[\varepsilon\zeta]\psi + \varepsilon\partial_x\zeta\partial_x\psi|_{H^{\alpha+1}}^2 \\
       \leq &M(\alpha+3)|\psi|_{\dot{H}^{\alpha+3}}.
   \end{align*}
   Then, it only remains to see that using product estimates \ref{product estimate} and $|(\mathrm{F}_{\mu}-1)[u]|_{H^{\alpha}} \lesssim \mu|u|_{H^{\alpha+2}}$ for any $u \in H^{\alpha+2}(\mathbb{R})$, we have
   \begin{align*}
       |(\partial_x\psi)^2 - (\mathrm{F}_{\mu}[\partial_x\psi])^2|_{\dot{H}^{\alpha+1}} \leq &|(\partial_x\psi)^2 - (\mathrm{F}_{\mu}[\partial_x\psi])^2|_{H^{\alpha+1}} \\
       \leq &|\partial_x\psi - \mathrm{F}_{\mu}[\partial_x\psi]|_{H^{\alpha+1}}|\partial_x\psi + \mathrm{F}_{\mu}[\partial_x\psi]|_{H^{\alpha+1}}
       \lesssim \mu |\psi|_{\dot{H}^{\alpha+4}}|\psi|_{\dot{H}^{\alpha+2}}.
   \end{align*}
   This ends the proof.
\end{proof}

\begin{myrem}
    \begin{itemize}
        \item When deriving the Hamilton's equations associated with $H_0 + \varepsilon H_1$ we get the following Whitham-Boussinesq system
        \begin{align*}
            \begin{cases}
                \partial_t \zeta + \mathrm{F}_{\mu}^2[\partial_x^2 \psi] + \varepsilon \mathrm{F}_{\mu}\partial_x[\zeta \mathrm{F}_{\mu}[\partial_x\psi]] = 0, \\
                \partial_t \psi + \zeta + \frac{\varepsilon}{2}(\mathrm{F}_{\mu}[\partial_x \psi])^2 = 0.
            \end{cases}
        \end{align*}
        It is equivalent to the system \eqref{Whitham-Boussinesq system} at a precision order $O(\mu\varepsilon)$.
        \item We could approximate the Hamiltonian $H_{\WW}$ with any other perturbation of $H_0 + \varepsilon H_1$ of order $O(\mu\varepsilon)$. It wouldn't change the reasoning. Our choice is made to simplify the computations. 
    \end{itemize}
\end{myrem}

\begin{mynots}
    We will denote by $\widetilde{H}_0 + \varepsilon \widetilde{H}_1$ the Hamiltonian $H_0 + \varepsilon H_1$ rewritten in the variables $(\zeta,v)$ where $v = \partial_x\psi$. It is associated with the Poisson tensor $\widetilde{J} = \begin{pmatrix} 0 & -\partial_x \\ -\partial_x & 0 \end{pmatrix}$. We recall that a change of unknowns $\mathcal{T}(\eta, \omega)$ turns the initial Poisson tensor, denoted $J$, into $\Big( \frac{\partial F}{\partial(\eta,\omega)} \Big) J \Big( \frac{\partial F}{\partial(\eta,\omega)} \Big)^*$, where $*$ is the adjoint in $L^2(\mathbb{R})^2$.
\end{mynots}
Before applying Birkhoff's algorithm, we change the unknowns of the approximated Hamiltonian $H_0 + \varepsilon H_1$ with those usually used to diagonalize the linear part of the water waves equations.

\begin{mypp}\label{Transformation Diag}
    Let $r$ and $s$ be defined as
\begin{align}\label{Diagonal transformation}
    r \colonequals \frac{\zeta + \mathrm{F}_{\mu}[\partial_x\psi]}{2}, \ \ s \colonequals \frac{\zeta - \mathrm{F}_{\mu}[\partial_x\psi]}{2}.
\end{align}
We have
\begin{align}\label{Hamiltonian in r and s}
    (\widetilde{H}_0 + \varepsilon \widetilde{H}_1)(\zeta,v) = (H_0 + \varepsilon H_1)(\zeta,\psi) = \int_{\mathbb{R}} (r^2 + s^2) \dx + \frac{\varepsilon}{2} \int (r^3 + s^3) \dx -\frac{\varepsilon}{2} \int_{\mathbb{R}} (r^2 s + r s^2) \dx. 
\end{align}
We will denote the later Hamiltonian $H_{\BW}(r,s)$. The Poisson tensor associated with is $J_{\mu} = \begin{pmatrix} -\frac{\mathrm{F}_{\mu} \partial_x}{2} & 0 \\ 0 & \frac{\mathrm{F}_{\mu} \partial_x}{2} \end{pmatrix}$.
\end{mypp}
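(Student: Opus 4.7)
My plan is to verify the two assertions (the new form of the Hamiltonian and the new Poisson tensor) by direct computation, since the transformation \eqref{Diagonal transformation} is a linear bijection whose Jacobian and adjoint can be written explicitly.

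First I would handle the Hamiltonian identity. Setting $w \colonequals \mathrm{F}_{\mu}[\partial_x\psi]$ I rewrite the inversion of \eqref{Diagonal transformation} as $\zeta = r+s$ and $w = r-s$. The identity $\zeta^2 + w^2 = (r+s)^2 + (r-s)^2 = 2(r^2+s^2)$ immediately gives
\[
H_0 = \tfrac{1}{2}\int_{\mathbb{R}}(\zeta^2 + w^2)\dx = \int_{\mathbb{R}}(r^2+s^2)\dx.
\]
For the cubic part I would expand $\zeta w^2 = (r+s)(r-s)^2 = r^3 + s^3 - r^2 s - r s^2$, which yields
\[
\varepsilon H_1 = \tfrac{\varepsilon}{2}\int_{\mathbb{R}}\zeta w^2\dx = \tfrac{\varepsilon}{2}\int_{\mathbb{R}}(r^3 + s^3)\dx - \tfrac{\varepsilon}{2}\int_{\mathbb{R}}(r^2 s + r s^2)\dx.
\]
Adding the two identities yields \eqref{Hamiltonian in r and s}. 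Since $(H_0+\varepsilon H_1)(\zeta,\psi)$ depends on $\psi$ only through $\partial_x\psi$, the identification with $(\widetilde{H}_0+\varepsilon \widetilde{H}_1)(\zeta,v)$ when $v=\partial_x\psi$ is automatic by definition of $\widetilde{H}_0, \widetilde{H}_1$.

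Next I would compute the Poisson tensor under the change of unknowns $\mathcal{S}(\zeta,v) = (r,s)$. The Jacobian is the constant operator
\[
\frac{\partial \mathcal{S}}{\partial(\zeta,v)} = \frac{1}{2}\begin{pmatrix} 1 & \mathrm{F}_{\mu} \\ 1 & -\mathrm{F}_{\mu} \end{pmatrix}.
\]
Since $\mathrm{F}_{\mu}$ is a Fourier multiplier with real even symbol, it is self-adjoint on $L^2(\mathbb{R})$, so the $L^2(\mathbb{R})^2$-adjoint is
\[
\Bigl(\frac{\partial \mathcal{S}}{\partial(\zeta,v)}\Bigr)^* = \frac{1}{2}\begin{pmatrix} 1 & 1 \\ \mathrm{F}_{\mu} & -\mathrm{F}_{\mu} \end{pmatrix}.
\]
Applying the formula recalled in the statement, I would then compute
\[
J_{\mu} = \frac{1}{2}\begin{pmatrix} 1 & \mathrm{F}_{\mu} \\ 1 & -\mathrm{F}_{\mu} \end{pmatrix}\begin{pmatrix} 0 & -\partial_x \\ -\partial_x & 0 \end{pmatrix}\frac{1}{2}\begin{pmatrix} 1 & 1 \\ \mathrm{F}_{\mu} & -\mathrm{F}_{\mu} \end{pmatrix},
\]
perform the two-by-two matrix multiplications, and use commutativity of the scalar Fourier multipliers $\mathrm{F}_{\mu}$ and $\partial_x$ to collect terms. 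The off-diagonal entries cancel and the diagonal entries simplify to $-\mathrm{F}_{\mu}\partial_x/2$ and $+\mathrm{F}_{\mu}\partial_x/2$, giving the announced formula for $J_\mu$.

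There is no real obstacle: the only two points worth emphasising are the self-adjointness of $\mathrm{F}_\mu$ on $L^2(\mathbb{R})$ (necessary for the adjoint computation) and the commutation of $\mathrm{F}_\mu$ with $\partial_x$ (necessary for the off-diagonal cancellation). Both are standard properties of Fourier multipliers. Once they are in hand, the proof reduces to the two elementary algebraic computations above.
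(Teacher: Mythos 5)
Your proposal is correct and matches the paper's approach exactly: the paper dismisses this as ``easy computations,'' and you have simply carried them out, namely the algebraic identities $(r+s)^2+(r-s)^2=2(r^2+s^2)$ and $(r+s)(r-s)^2=r^3+s^3-r^2s-rs^2$ for the Hamiltonian, and the conjugation $\mathcal{S}'\,\widetilde{J}\,\mathcal{S}'^{*}$ for the Poisson tensor using the self-adjointness of $\mathrm{F}_{\mu}$ on $L^2$ and its commutation with $\partial_x$. Everything checks out.
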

\begin{proof}
    Easy computations.
\end{proof}

For later purposes we also define the inverse of the transformation \eqref{Diagonal transformation}.
\begin{myppr}\label{From HBW to H0 epsilon H1}
    Let $\alpha \geq 0$ and $\mathcal{T}_{\rm{D}}: H^{\alpha+1}(\mathbb{R})\times H^{\alpha+1}(\mathbb{R}) \to H^{\alpha}(\mathbb{R}) \times H^{\alpha}(\mathbb{R})$ be defined by 
    \begin{align}\label{Transformation Diag WW}
        \mathcal{T}_{\rm{D}}(r,s) \colonequals \begin{pmatrix} r+s \\ \mathrm{F}_{\mu}^{-1}[r-s] \end{pmatrix}.
    \end{align}
    Then $(\widetilde{H}_0 + \varepsilon \widetilde{H}_1)(\mathcal{T}_{\rm{D}}(r,s)) = H_{\BW}(r,s)$.
\end{myppr}
\begin{proof}
      For any $\alpha \geq 0$ and any $u \in H^{\alpha+1}(\mathbb{R})$, we have $|\mathrm{F}_{\mu}^{-1}[u]|_{H^{\alpha}} \lesssim |u|_{H^{\alpha+1}}$. 
\end{proof}

To separate each important part of the Hamiltonian $H_{\BW}$ we set some notations.
\begin{mynots}
Let
\begin{align*}
    \begin{cases}
        L \colonequals \int_{\mathbb{R}} (r^2 + s^2) \dx, \\
        Z \colonequals \frac{1}{2} \int (r^3 + s^3) \dx, \\
        W \colonequals -\frac{1}{2} \int_{\mathbb{R}} (r^2 s + r s^2) \dx.
    \end{cases}
\end{align*}
We have
\begin{align*}
    H_{\BW} = L + \varepsilon Z + \varepsilon W.
\end{align*}
\end{mynots}
\begin{myrem}
    Our goal is to decouple the equations associated with the Hamiltonian $H_{\BW}$. We remark that only $W$ gives coupled terms in the Hamilton's equations. 
\end{myrem}

\label{From the water waves equations to a Whitham-Boussinesq system}

\subsection{Application of Birkhoff's algorithm}
We suppose first that all our objects of study are smooth. If $G(r,s)$ is a smooth function then it's corresponding Hamilton's equations are
\begin{align*}
    \partial_t \begin{pmatrix} r \\ s \end{pmatrix} = J_{\mu} \nabla G.
\end{align*}
We will denote by $\Phi_G^t$ the corresponding flow. 

If $F(r,s)$ is another smooth function, we denote the Lie derivative of $F$ with respect to $G$ by 
\begin{align*}
    \{ F, G \}_{\mu} \colonequals (\nabla F, J_{\mu} \nabla G ),
\end{align*}
where $(\cdot,\cdot)$ denotes the scalar product in $L^2(\mathbb{R})^2$.

\begin{myppr}
    Let $G(r,s)$ be a smooth function and $\Phi_G^t$ its flow associated with the Poisson tensor $J_{\mu}$. Let also $F(r,s)$ be another smooth function. Then 
    \begin{align*}
        F \circ \Phi_{G}^{\varepsilon} = F + \varepsilon \{F, G \}_{\mu} + O(\varepsilon^2). 
    \end{align*}
\end{myppr}
\begin{proof}
    See Subsection 4.1 in \cite{Bambusi20}.
\end{proof}

Suppose for now that in our case the Hamiltonian $H_{\BW}$ is smooth. Then for any smooth function $G(r,s)$ we get
\begin{align*}
    H_{\BW} \circ \Phi_{G}^{\varepsilon} = L + \varepsilon \{L, G\}_{\mu} + \varepsilon Z + \varepsilon W + O(\varepsilon^2).
\end{align*}

\begin{myrem}
Here it seems that to get a normal form at the order of precision $O(\varepsilon^2)$, we need $G$ to solve the homological equation
\begin{align}\label{Homological equation}
    \{ L, G\}_{\mu} + W = 0.
\end{align}
In \cite{Bambusi20}, the author gave the solution of such equations. If 
\begin{align}\label{Assumption for a solution of the homological equation}
    \lim_{\tau \to +\infty} W(\Phi_{L}^{\tau}) + W(\Phi_{L}^{-\tau}) = 0,
\end{align}
(in our case, we can prove this assumption using a Littlewood-Paley decomposition)
then the solution of \eqref{Homological equation} is
\begin{align}\label{Solution of the Homological solution with the exact Poisson tensor}
    G(r,s) = -\frac{1}{2} \int_{\mathbb{R}} \sgn(\tau) W(\Phi_{L}^{\tau}(r,s)) d\tau,
\end{align}
at the additional condition that the above function is well-defined.

However the latter condition is not easy to verify in general. We expose here a naive attempt to do so in our context. 

The flow $\Phi_{L}^\tau$ satisfy
\begin{align*}
    \frac{d}{d\tau} \Phi_{L}^\tau = J_{\mu} \nabla L (\Phi_{L}^\tau).
\end{align*}
So that
\begin{align*}
    \Phi_{L}^\tau (r,s) = \begin{pmatrix}e^{-iD\mathrm{F}_{\mu}\tau}r \\ e^{iD\mathrm{F}_{\mu}\tau}s \end{pmatrix},
\end{align*}
and 
\begin{align*}
    W(\Phi_{L}^{\tau}(r,s)) = -\frac{1}{2} \int_{\mathbb{R}} (e^{-iD\mathrm{F}_{\mu}\tau}r)^2 e^{iD\mathrm{F}_{\mu}\tau}s \dx - \frac{1}{2} \int_{\mathbb{R}} (e^{iD\mathrm{F}_{\mu}\tau}s)^2 e^{-iD\mathrm{F}_{\mu}\tau}r \dx. 
\end{align*}
So 
\begin{align*}
    |W(\Phi_{L}^{\tau}(r,s))| &\leq \int_{\mathbb{R}} |e^{-iD\mathrm{F}_{\mu}\tau}r|^2 |e^{iD\mathrm{F}_{\mu}\tau}s| \dx + \int_{\mathbb{R}} |e^{iD\mathrm{F}_{\mu}\tau}s|^2 |e^{-iD\mathrm{F}_{\mu}\tau}r| \dx\\
    &\leq |e^{iD\mathrm{F}_{\mu}\tau}s|_{\infty} |e^{-iD\mathrm{F}_{\mu}\tau}r|_2^2 + |e^{-iD\mathrm{F}_{\mu}\tau}r|_{\infty} |e^{iD\mathrm{F}_{\mu}\tau}s|_2^2.
\end{align*}
But $|e^{-iD\mathrm{F}_{\mu}\tau}r|_2 = |r|_2$, and the dispersive estimates only give a decrease in time of $|e^{iD\mathrm{F}_{\mu}\tau}s|_{\infty}$ and $|e^{-iD\mathrm{F}_{\mu}\tau}r|_{\infty}$ of order $1/t^{-1/2}$ \cite{Bulut2016}.

The idea to overcome this issue is to only solve an approximation of the homological equation at order $O(\mu\varepsilon)$. 
\end{myrem}

We define another Lie derivative, associated with the Poisson tensor $J_{\simp} \colonequals \begin{pmatrix} -\partial_x/2 & 0 \\ 0 & \partial_x/2 \end{pmatrix}$: 
\begin{align*}
    \{ L, G\}_{\simp} \colonequals (\nabla L, J_{\simp} \nabla G)
\end{align*}
\begin{myppr}
    We have 
    \begin{align*}
        \{ L, G\}_{\mu} = \{ L, G\}_{\simp} + O(\mu)
    \end{align*}
\end{myppr}
\begin{proof}
    By definition $\{ L, G\}_{\mu} = (\nabla L, J_{\mu} \nabla G)$, with $J_{\mu} = \begin{pmatrix} -\frac{\mathrm{F}_{\mu} \partial_x}{2} & 0 \\ 0 & \frac{\mathrm{F}_{\mu} \partial_x}{2} \end{pmatrix}$. So 
    \begin{align*}
        \{ L, G\}_{\mu} - \{ L, G\}_{\simp} = ((\nabla L, (J_{\mu}-J_{\simp}) \nabla G),
    \end{align*}
    with $J_{\mu}-J_{\simp} = \begin{pmatrix} - \frac{\mathrm{F}_{\mu}-1}{2} & 0 \\ 0 & \frac{\mathrm{F}_{\mu}-1}{2} \end{pmatrix}$.
    But for any $\alpha \geq 0$ and any $u \in H^{\alpha+2}(\mathbb{R})$, we have $|(\mathrm{F}_{\mu} - 1)[u]|_{H^\alpha} \lesssim \mu |u|_{H^{\alpha+2}}$. 
\end{proof}

The Lie derivative associated with the Poisson tensor $J_{\simp}$ is the same one as in Bambusi's article. To write an explicit expression of the solution of the simplified homological equation associated with 
\begin{align}\label{Simplified homological equation}
        \{L, G \}_{\simp} + W =0,
\end{align}
we need the definition of a classical primitive operator and some properties on it.

\begin{mydefpp}\label{primitive operator}
Let $\alpha \geq 0$. We define the operator $\partial^{-1}: W^{\alpha,1}(\mathbb{R}) \to W^{\alpha,\infty}(\mathbb{R})$ by the formula
\begin{align}\label{partial -1}
    \partial^{-1} u(y) = \frac{1}{2} \int_{\mathbb{R}} \sgn(y-y_1) u(y_1) \dy_1.
\end{align}
Here are some properties on it:
\begin{itemize}
    \item It is continuous.
    \item It is skew-adjoint for the scalar product in $L^2(\mathbb{R})$.
    \item If $\lim_{x \to +\infty} u(x) + u(-x) = 0$ (it is the case for any function $u \in W^{1,1}(\mathbb{R})$, see Corollary 8.9 in \cite{Brezis}), then $\partial^{-1} (\partial_x u) = u$.
    \item $\partial_x (\partial^{-1} u)= u$.
\end{itemize}
\end{mydefpp}

We can now write explicitly the solution of the simplified homological equation.
\begin{mypp}\label{Solution of the simplified homological equation}
    Let $W(r,s) = -\frac{1}{2} \int_{\mathbb{R}} (r^2 s + r s^2) \dx$, and $L(r,s) = \int_{\mathbb{R}} (r^2 + s^2) \dx$ for any $(r,s) \in W^{1,1}(\mathbb{R})$. Then the solution of the simplified homological equation \eqref{Simplified homological equation} is 
    \begin{align*}
        G(r,s) = \frac{1}{4} \int_{\mathbb{R}} [\partial^{-1} (r^2) s + \partial^{-1}(r) s^2]\dx.
    \end{align*}
\end{mypp}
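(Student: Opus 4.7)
The plan is to verify directly that the proposed $G$ solves $\{L, G\}_{\simp} + W = 0$. The key tools are the two properties of $\partial^{-1}$ highlighted in Definition/Property \ref{primitive operator}: skew-adjointness in $L^2(\mathbb{R})$, and $\partial_x \partial^{-1} = \mathrm{Id}$ (together with $\partial^{-1}\partial_x = \mathrm{Id}$ on functions whose values at $\pm\infty$ are opposite, which is automatic for $W^{1,1}$ functions by Corollary 8.9 in Brezis).

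Step 1 (functional derivatives of $G$). Varying the first integrand with respect to $r$ and using skew-adjointness gives $\int \partial^{-1}(2r\,\delta r)\, s\, \dx = -2\int r\,\partial^{-1}(s)\,\delta r\, \dx$; varying the second gives $\int \partial^{-1}(\delta r)\, s^2\, \dx = -\int \partial^{-1}(s^2)\,\delta r\, \dx$. Hence
\begin{align*}
\nabla_r G = -\tfrac{1}{2}\, r\, \partial^{-1}(s) - \tfrac{1}{4}\,\partial^{-1}(s^2), \qquad \nabla_s G = \tfrac{1}{4}\,\partial^{-1}(r^2) + \tfrac{1}{2}\,\partial^{-1}(r)\, s,
\end{align*}
the second formula obtained symmetrically (now with the direct derivative in the $s^2$ term, no skew-adjointness needed).

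Step 2 (the Poisson bracket). Since $\nabla L = (2r, 2s)^T$ and $J_{\simp} = \tfrac{1}{2}\begin{pmatrix} -\partial_x & 0 \\ 0 & \partial_x \end{pmatrix}$, one integration by parts in $x$ yields
\begin{align*}
\{L, G\}_{\simp} = \int_{\mathbb{R}} \partial_x r\, \nabla_r G\, \dx \; - \; \int_{\mathbb{R}} \partial_x s\, \nabla_s G\, \dx.
\end{align*}
Substituting Step 1 produces four integrals; each is reduced to a cubic monomial in $(r,s)$ by a further integration by parts together with $\partial_x \partial^{-1} = \mathrm{Id}$. For instance,
\begin{align*}
-\tfrac{1}{2}\int_{\mathbb{R}} \partial_x r \cdot r\, \partial^{-1}(s)\, \dx = -\tfrac{1}{4}\int_{\mathbb{R}} \partial_x(r^2)\,\partial^{-1}(s)\, \dx = \tfrac{1}{4}\int_{\mathbb{R}} r^2 s\, \dx,
\end{align*}
and the three remaining terms evaluate, in the same way, to $\tfrac{1}{4}\int r^2 s\, \dx$ or $\tfrac{1}{4}\int r s^2\, \dx$. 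Summing gives $\{L, G\}_{\simp} = \tfrac{1}{2}\int (r^2 s + r s^2)\, \dx = -W$, as required.

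The argument is essentially bookkeeping; the only analytic subtlety is to ensure that boundary terms vanish at infinity and that $\partial^{-1}\partial_x = \mathrm{Id}$, both of which follow from the $W^{1,1}$ regularity of $r$ and $s$. For a conceptual derivation (rather than verification) one could apply Bambusi's representation $G = -\tfrac{1}{2}\int \sgn(\tau)\, W(\Phi_L^\tau(r,s))\, \mathrm{d}\tau$: under $J_{\simp}$ the flow of $L$ is simply transport $(r,s)(\tau) = (r(\cdot - \tau),\, s(\cdot + \tau))$, and the change of variable $z = y + 2\tau$ in the resulting double integral produces the kernel $\tfrac{1}{2}\sgn(z-y)$, that is the operator $\partial^{-1}$, applied to $r$ or $r^2$, recovering exactly the stated formula.
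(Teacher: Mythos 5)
Your direct verification is correct and supplies a self-contained proof where the paper itself offers only the citation ``See Lemma 5.2 in \cite{Bambusi20}.'' The functional derivatives
\begin{align*}
\nabla_r G = -\tfrac{1}{2}\,r\,\partial^{-1}(s) - \tfrac{1}{4}\,\partial^{-1}(s^2), \qquad
\nabla_s G = \tfrac{1}{4}\,\partial^{-1}(r^2) + \tfrac{1}{2}\,\partial^{-1}(r)\,s
\end{align*}
are exactly right --- the paper even records $2\partial_r G = -r\partial^{-1}(s) - \tfrac{1}{2}\partial^{-1}(s^2)$ in the subsequent property on almost smoothness of $J_{\simp}\nabla G$ --- and each of your four integrals does reduce, via $\partial_x\partial^{-1} = \mathrm{Id}$ and one integration by parts, to $\tfrac{1}{4}\int r^2 s\,\dx$ or $\tfrac{1}{4}\int r s^2\,\dx$, summing to $-W$. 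The analytic ingredients you invoke (skew-adjointness of $\partial^{-1}$, $\partial_x\partial^{-1}=\mathrm{Id}$, decay of $W^{1,1}$ functions at infinity) are exactly the items catalogued in Definition/Property \ref{primitive operator}, so there is no gap: $\partial^{-1}(r^2)$, $\partial^{-1}(s)$, etc.\ are bounded and the remaining factors are $W^{1,1}$, so all integrands are $L^1$ and all boundary terms vanish. Your closing remark is also accurate: under $J_{\simp}$ the flow $\Phi_L^\tau$ is pure transport $(r,s)\mapsto (r(\cdot-\tau), s(\cdot+\tau))$, and the substitution $z=y+2\tau$ in $-\tfrac{1}{2}\int\sgn(\tau)W(\Phi_L^\tau)\,d\tau$ produces precisely the kernel $\tfrac{1}{2}\sgn(z-y)$ of $\partial^{-1}$, which is why the \emph{simplified} equation admits a convergent representation when the $J_\mu$ version (cf.\ the remark preceding the Proposition) is blocked by the mere $t^{-1/2}$ dispersive decay. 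So you and the paper reach the same formula by genuinely different means: your bookkeeping verification is the elementary route and checks the answer once given, while Bambusi's flow representation, which the paper defers to, is the one that actually produces the formula.
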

\begin{proof}
    See Lemma 5.2 in \cite{Bambusi20}.
\end{proof}

Here $G$ is clearly well-defined for $r$ and $s$ in $W^{\alpha,1}(\mathbb{R})$ with $\alpha \geq 1$ large enough. However there is another problem, $G$ does not generate a flow. For that reason we use the generalization of Birkhoff's algorithm in the case of "almost smooth" functions depending on small parameters (see \cite{Bambusi20}). We recall here what is an almost smooth function.
\begin{mydef}\label{almost smooth functions}
    Let $\{\mathcal{B}^{\alpha}\}_{\alpha \geq 0}$ be a Banach scale. A map $F(r,s,\mu,\varepsilon)$ is said to be almost smooth if $\forall \beta, \gamma \geq 0$, there exists $\delta$ and an open neighbourhood of the origin $\mathcal{U} \subset \mathcal{B}^{\delta}\times \mathbb{R} \times \mathbb{R}$ (where $\mathbb{R}\times \mathbb{R}$ is the domain of $(\mu,\varepsilon)$), such that
    \begin{align*}
        F \in \mathcal{C}^{\beta}(\mathcal{U}, \mathcal{B}^{\gamma}\times\mathbb{R}\times \mathbb{R}).
    \end{align*}
\end{mydef}

Due to the definition of $\partial^{-1}$, for the rest of the article, we will work with the Banach scale $\{W^{\alpha,1}(\mathbb{R}) \times W^{\alpha,1}(\mathbb{R}) \}_{\alpha \geq 0}$.
If 
\begin{itemize}
    \item for any $\alpha \geq 0$, there exists $\alpha' \geq 0$ such that the Poisson tensor $J_{\mu}$ is bounded from $W^{\alpha',1}(\mathbb{R})\times W^{\alpha',1}(\mathbb{R})$ to $W^{\alpha,1}(\mathbb{R})\times W^{\alpha,1}(\mathbb{R})$,
    \item the function $G(r,s)$ defined in Proposition \ref{Solution of the Homological solution with the exact Poisson tensor} has an almost smooth vector field, i.e. $J_{\mu}\nabla G$ is almost smooth, same for $\{ L, G\}_{\simp}$,  
\end{itemize} 
then the theory developed in \cite{Bambusi20} gives the existence of a transformation $\mathcal{T}_{\rm{B}}$ such that 
\begin{align*}
    H_{\BW} \circ \mathcal{T}_{\rm{B}} &= L + \varepsilon \{L, G\}_{\mu} + \varepsilon Z + \varepsilon W + O(\varepsilon^2) \\
    &= L + \varepsilon \{L, G\}_{\simp} + \varepsilon Z + \varepsilon W + O(\mu\varepsilon + \varepsilon^2)\\
    &= L + \varepsilon Z + O(\mu\varepsilon + \varepsilon^2),
\end{align*}
where $A = B + O(\mu^k \varepsilon^l)$ now means $\frac{A-B}{\mu^k \varepsilon^l}$ has an almost smooth vector field. 

We verify the latter two conditions.
\begin{myppr}\label{Boundedness of the Poisson tensor}
    For any $\alpha\geq 0$ the Fourier multiplier $\mathrm{F}_{\mu}$ is bounded from $W^{\alpha+1,1}(\mathbb{R})$ to $W^{\alpha,1}(\mathbb{R})$. 
    In particular, the Poisson tensor $J_{\mu}$ is bounded from $W^{\alpha+2,1}(\mathbb{R})\times W^{\alpha+2,1}(\mathbb{R})$ to $W^{\alpha,1}(\mathbb{R})\times W^{\alpha,1}(\mathbb{R})$.
    
    The inverse Fourier multiplier of $\mathrm{F}_{\mu}$, denoted $\mathrm{F}_{\mu}^{-1} \colonequals \sqrt{\frac{\sqrt{\mu}|D|}{\tanh{(\sqrt{\mu}|D|)}}}$, is bounded from $W^{\alpha+2,1}(\mathbb{R})$ to $W^{\alpha,1}(\mathbb{R})$.
\end{myppr}
\begin{proof}
    First, we remark that we only need to prove the boundedness of $\mathrm{F}_{\mu}$ in $ W^{1,1}(\mathbb{R}) \to L^1(\mathbb{R})$. To do so, we consider an even cutoff function $\chi \in \mathcal{C}^{\infty}((-2,2))$ such that $\chi(\xi) = 1$ for $|\xi| \leq 1$. Let $f$ be in $W^{1,1}(\mathbb{R})$, we can decompose $\mathrm{F}_{\mu}[f]$ into
    \begin{align*}
        \mathrm{F}_{\mu}[f](x) = &\int_{\mathbb{R}} \int_{\mathbb{R}} e^{-i(y-x)\xi} F_{\mu}(\xi) \chi(\xi)f(y) \dy d\xi + \int_{\mathbb{R}} \int_{\mathbb{R}} e^{-i(y-x)\xi} F_{\mu}(\xi) (1-\chi(\xi))f(y) \dy d\xi \\
        \colonequals &I_1 + I_2.
    \end{align*}
    We deal with each term separately. 
    \begin{itemize}
        \item We begin with $I_1$. We can write it using the inverse Fourier transform, denoted here $\mathcal{F}^{-1}$:
        \begin{align*}
            I_1 = \int_{\mathbb{R}} \int_{\mathbb{R}} e^{-i(y-x)\xi} F_{\mu}(\xi) \chi(\xi)f(y) \dy d\xi = \mathcal{F}^{-1}[F_{\mu}(\xi)\chi(\xi)]* f.
        \end{align*}
        Using Young's inequality, we get
        \begin{align*}
            |\mathcal{F}^{-1}[F_{\mu}(\xi)\chi(\xi)]* f|_{L^1} \leq |\mathcal{F}^{-1}[F_{\mu}(\xi)\chi(\xi)]|_{L^1} |f|_{L^1}.
        \end{align*}
        But
        \begin{align*}
            |\mathcal{F}^{-1}[F_{\mu}(\xi)\chi(\xi)]|_{L^1} \lesssim |F_{\mu}(\xi)\chi(\xi))|_{H^1}.
        \end{align*}
        And $\forall \xi \in \mathbb{R}$ $|F_{\mu}(\xi)|, |F_{\mu}'(\xi)| \leq 1$, so $\mathcal{F}^{-1}[F_{\mu}(\xi)\chi(\xi)]$ is in $L^1(\mathbb{R})$.
        It finishes the case of $I_1$.
        \item Now we deal with $I_2$. Doing an integration by parts, we obtain 
        \begin{align*}
            \int_{\mathbb{R}} \int_{\mathbb{R}} e^{-i(y-x)\xi} F_{\mu}(\xi) (1-\chi(\xi))f(y) \dy d\xi &= \int_{\mathbb{R}}\int_{\mathbb{R}} e^{-i(y-x)\xi} \frac{F_{\mu}(\xi)}{i\xi} (1-\chi(\xi)) \partial_y f(y) \dy d\xi \\
            &= \mathcal{F}^{-1}[\frac{F_{\mu}(\xi)}{i\xi}(1-\chi(\xi))]*\partial_y f
        \end{align*}
        Using again Young's inequality, we get 
        \begin{align*}
            |\mathcal{F}^{-1}[\frac{F_{\mu}(\xi)}{i\xi}(1-\chi(\xi))]*\partial_y f|_{L^1} \leq |\mathcal{F}^{-1}[\frac{F_{\mu}(\xi)}{i\xi}(1-\chi(\xi))]|_{L^1} |\partial_y f|_{L^1}.
        \end{align*}
        But
        \begin{align*}
            |\mathcal{F}^{-1}[\frac{F_{\mu}(\xi)}{i\xi}(1-\chi(\xi))]|_{L^1} \lesssim |\frac{F_{\mu}(\xi)}{i\xi}(1-\chi(\xi))|_{H^1}.
        \end{align*}
        So $\mathcal{F}^{-1}[\frac{F_{\mu}(\xi)}{i\xi}(1-\chi(\xi))] \in L^1(\mathbb{R})$. It finishes the case of $I_2$.
    \end{itemize}
    At the end, we have $|\mathrm{F}_{\mu}[f]|_{L^1} \leq |I_1|_{L^1} + |I_2|_{L^1} \lesssim |f|_{W^{1,1}}$.
    
    With the same process we obtain the result on $\mathrm{F}_{\mu}^{-1}$ (we need a second integration by parts in the high frequencies to get $\mathcal{F}^{-1}[\frac{F_{\mu}^{-1}(\xi)}{(i\xi)^2}(1-\chi(\xi))]$ in $L^1$). 
\end{proof}

\begin{myppr}
    The function $G(r,s)$ defined in Proposition \ref{Solution of the simplified homological equation} and $\{L, G\}_{\simp}$ have an almost smooth vector field.
\end{myppr}
\begin{proof}
    First we have $2 \partial_r G = -r\partial^{-1} (s) - \frac{1}{2}\partial^{-1}(s^2) $, where we used the skew-adjoint property of $\partial^{-1}$. So $-2 \mathrm{F}_{\mu} \partial_x \partial_r G = \mathrm{F}_{\mu}[\partial_x (r) \partial^{-1}(s)] + \mathrm{F}_{\mu}[rs] + \frac{1}{2}\mathrm{F}_{\mu}[s^2]$. But for any $\alpha \geq 0$, $\mathrm{F}_{\mu}$ is bounded from $W^{\alpha+1,1}(\mathbb{R})$ to $W^{\alpha,1}(\mathbb{R})$, and $W^{\alpha+1,1}(\mathbb{R})$ is an algebra. We also know that $\partial^{-1}$ is a linear operator bounded from $W^{\alpha+1,1}(\mathbb{R})$ to $W^{\alpha+1,\infty}(\mathbb{R})$ , and the product of a $W^{\alpha+1,\infty}(\mathbb{R})$ with a $W^{\alpha+1,1}(\mathbb{R})$ function is still in $W^{\alpha+1,1}(\mathbb{R})$. Combining all these elements, we get the result for $G$.
   
   The same arguments give the result for $\{ L, G\}_{\simp}$.
\end{proof}

We have an explicit formulation of the transformation $\mathcal{T}_{\rm{B}}$ given by (4.25) in \cite{Bambusi20}, taking $X = J_{\simp} \nabla G$, on which we can do precise estimates.
\begin{mypp}\label{proposition explicit formulation of T}
    Let $\alpha \geq 0 $ and $\mathcal{T}_{\rm{B}}: W^{\alpha+1,1}(\mathbb{R})\times W^{\alpha+1,1}(\mathbb{R}) \to W^{\alpha,1}(\mathbb{R})\times W^{\alpha,1}(\mathbb{R})$ be defined by
    \begin{align}\label{explicit formulation of T}
        \mathcal{T}_{\rm{B}}(r,s) = \begin{pmatrix} r \\ s \end{pmatrix} + \varepsilon J_{\simp} \nabla G(r,s) = \begin{pmatrix} r + \frac{\varepsilon}{4} \partial_x (r) \partial^{-1}(s) + \frac{\varepsilon}{4} rs + \frac{\varepsilon}{8}s^2 \\ s + \frac{\varepsilon}{4} \partial_x (s) \partial^{-1}(r) + \frac{\varepsilon}{4} rs + \frac{\varepsilon}{8}r^2 \end{pmatrix}.
    \end{align}
    Then 
    \begin{align*}
        H_{\BW} \circ \mathcal{T}_{\rm{B}} = L + \varepsilon Z + O(\mu\varepsilon + \varepsilon^2)
    \end{align*}
\end{mypp}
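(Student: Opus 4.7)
The plan is to Taylor expand $H_{\BW}$ about $(r,s)$ in powers of $\varepsilon$, then eliminate the first-order cross term using the homological equation from Proposition~\ref{Solution of the simplified homological equation}.

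Writing $\mathcal{T}_{\rm B}(r,s) = (r,s) + \varepsilon X(r,s)$ with $X = J_{\simp}\nabla G$, Taylor's formula with integral remainder yields
$$ H_{\BW}\circ\mathcal{T}_{\rm B} \;=\; H_{\BW} + \varepsilon\,dH_{\BW}[X] + \varepsilon^{2} Q, $$
where $Q(r,s) = \int_{0}^{1}(1-t)\,d^{2}H_{\BW}\bigl((r,s)+t\varepsilon X(r,s)\bigr)[X,X]\,dt$ is the quadratic remainder. Since $dF[X] = (\nabla F, J_{\simp}\nabla G) = \{F,G\}_{\simp}$ and $H_{\BW} = L + \varepsilon Z + \varepsilon W$, the first-order term reads
$$ \varepsilon\,dH_{\BW}[X] \;=\; \varepsilon\{L,G\}_{\simp} + \varepsilon^{2}\bigl(\{Z,G\}_{\simp} + \{W,G\}_{\simp}\bigr), $$
and Proposition~\ref{Solution of the simplified homological equation} supplies $\{L,G\}_{\simp} + W = 0$. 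The $\varepsilon W$ and $\varepsilon\{L,G\}_{\simp}$ contributions cancel, leaving
$$ H_{\BW}\circ\mathcal{T}_{\rm B} \;=\; L + \varepsilon Z + \varepsilon^{2}\bigl(\{Z,G\}_{\simp} + \{W,G\}_{\simp} + Q\bigr). $$

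It then remains to verify that the remainder $R\colonequals \{Z,G\}_{\simp} + \{W,G\}_{\simp} + Q$ has an almost smooth vector field, so that $\varepsilon^{2}R$ falls within the $O(\mu\varepsilon + \varepsilon^{2})$ class (in the sense recalled just before the statement). For this I would combine three ingredients: the boundedness of $\partial^{-1}\colon W^{\alpha+1,1}\to W^{\alpha+1,\infty}$ from Definition/Property~\ref{primitive operator}, the $W^{\alpha+1,1}\to W^{\alpha,1}$ boundedness of $\mathrm{F}_{\mu}$ and $\mathrm{F}_{\mu}^{-1}$ from Property~\ref{Boundedness of the Poisson tensor}, and the algebra property of $W^{\alpha,1}$ for $\alpha\geq 1$, together with the skew-adjointness of $\partial^{-1}$ to move antiderivatives off derivatives in $\{Z,G\}_{\simp}$ and $\{W,G\}_{\simp}$. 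The $\mu\varepsilon$ slack in the final bound accommodates the passage from the $J_{\simp}$-gradient produced by the Taylor expansion to the $J_{\mu}$-gradient used downstream in the Hamiltonian equations, via $J_{\mu} - J_{\simp} = \frac{1}{2}\mathrm{diag}\bigl(-(\mathrm{F}_{\mu}-1),(\mathrm{F}_{\mu}-1)\bigr)\partial_{x} = O(\mu)$.

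The main obstacle is the control of $Q$. Although $H_{\BW}$ is a cubic polynomial, so that $d^{2}H_{\BW}$ is linear and $Q$ is genuinely explicit, the vector field $X = J_{\simp}\nabla G$ contains the antiderivative factors $\partial^{-1}(r),\partial^{-1}(s)$, and the integrand of $Q$ is therefore a sum of products of polynomials in $(r,s)$ with antiderivatives of such polynomials, all evaluated at the shifted point $(r,s)+t\varepsilon X$. The bookkeeping consists of expanding each monomial, applying $J_{\mu}\nabla$, and using the three tools above to bound each resulting term in $W^{\alpha,1}(\mathbb{R})\times W^{\alpha,1}(\mathbb{R})$ with constants independent of $(\mu,\varepsilon)\in\mathcal{A}$; modulo patience, no new idea beyond those already collected in this subsection is needed.
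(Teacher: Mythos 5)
Your Taylor expansion with $dH_{\BW}[X]=\{H_{\BW},G\}_{\simp}$, combined with the homological equation $\{L,G\}_{\simp}+W=0$, is exactly the cancellation the paper carries out by hand (the paper writes out the $O(\varepsilon)$ terms of $H_{\BW}(\mathcal{T}_{\rm B}(r,s))$ and uses the skew-adjointness of $\partial^{-1}$, which is the homological equation realized at the integrand level), so the two proofs are essentially the same argument organized differently. One small inaccuracy in your closing paragraph: no $\mu\varepsilon$ slack is actually consumed here --- both your derivation and the paper's yield the stronger $O(\varepsilon^2)$, and the stated $O(\mu\varepsilon+\varepsilon^2)$ is simply a relaxation; the $J_{\simp}$ versus $J_{\mu}$ discrepancy is handled in Proposition~\ref{preservation of the Hamiltonian structure} and Theorem~\ref{From HWh to HBW}, not inside this proposition.
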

\begin{proof}
    Let $r,s \in W^{\alpha+1,1}(\mathbb{R})$. The latter Sobolev space in an algebra, so the only terms we need to look at are $\partial_x r \partial^{-1}s$ and $\partial_x s \partial^{-1}r$. We have
    \begin{align*}
        |\partial_x r \partial^{-1}s|_{W^{\alpha,1}} \leq |\partial^{-1}s|_{W^{\alpha,\infty}}|\partial_x r|_{W^{\alpha,1}}.
    \end{align*}
    Here using the continuity of $\partial^{-1}$ from $L^1(\mathbb{R})$ to $L^{\infty}(\mathbb{R})$ for any $\beta \geq 0$ and the third property of Definition/Property \ref{primitive operator}, we get 
    \begin{align}\label{estimates in Sobolev infinity}
        \begin{cases}
            |\partial^{-1}s|_{W^{\alpha,\infty}} \lesssim |s|_{L^1} \ \ \If \ \ \alpha = 0,\\
            |\partial^{-1}s|_{W^{\alpha,\infty}} \lesssim |s|_{L^1} + \sum\limits_{\beta \leq \alpha} |\partial_x^{\beta} \partial^{-1} s|_{L^{\infty}} = |s|_{L^1} + \sum\limits_{\beta \leq \alpha} |\partial^{-1} \partial_x^{\beta} s|_{L^{\infty}} \lesssim |s|_{W^{\alpha,1}} \ \ \If \ \ \alpha \geq 1.
        \end{cases}
    \end{align}
    Idem for the other term. Thus $\mathcal{T}_{\rm{B}}: W^{\alpha+1,1}(\mathbb{R})\times W^{\alpha+1,1}(\mathbb{R}) \to W^{\alpha,1}(\mathbb{R})\times W^{\alpha,1}(\mathbb{R})$.
    
    From the definition of $H_{\BW}$ (see \eqref{Hamiltonian in r and s}) and \eqref{explicit formulation of T} we have
    \begin{align*}
        H_{\BW}(\mathcal{T}_{\rm{B}}(r,s)) = &\int_{\mathbb{R}} \Big( r^2 + \frac{\varepsilon}{2} r\partial_x r \partial^{-1}s + \frac{\varepsilon}{2} r^2s + \frac{\varepsilon}{4}rs^2 \Big) \dx \\
        + &\int_{\mathbb{R}} \Big(  s^2 + \frac{\varepsilon}{2} s\partial_x s \partial^{-1}r + \frac{\varepsilon}{2} rs^2 + \frac{\varepsilon}{4}s r^2 \Big) \dx \\
        - &\frac{\varepsilon}{2}\int_{\mathbb{R}} (r^2 s + r s^2) \dx + \frac{\varepsilon}{2}\int_{\mathbb{R}} (r^3 + s^3) \dx + O(\varepsilon^2). 
    \end{align*}
    But using the skew-adjointness of $\partial^{-1}$ we get
    \begin{align*}
        \begin{cases}
            \int_{\mathbb{R}} r\partial_x r \partial^{-1}s \dx = \frac{1}{2} \int_{\mathbb{R}} \partial_x(r^2) \partial^{-1} s \dx = - \frac{1}{2} \int_{\mathbb{R}} \partial^{-1}  (\partial_x r^2) s \dx = - \frac{1}{2} \int_{\mathbb{R}}  r^2 s \dx, \\
            \int_{\mathbb{R}} s\partial_x s \partial^{-1}r \dx = \frac{1}{2} \int_{\mathbb{R}} \partial_x(s^2) \partial^{-1} r \dx = - \frac{1}{2} \int_{\mathbb{R}} \partial^{-1}  (\partial_x s^2) r \dx = - \frac{1}{2} \int_{\mathbb{R}}  s^2 r \dx.
        \end{cases}
    \end{align*}
    Thus 
    \begin{align*}
        H_{\BW}(\mathcal{T}_{\rm{B}}(r,s)) &= \int_{\mathbb{R}} (r^2 + s^2) \dx + \frac{\varepsilon}{2} \int_{\mathbb{R}} (r^3 + s^3) \dx + O(\varepsilon^2) \\
        &= L(r,s) + \varepsilon Z(r,s) + O(\varepsilon^2).
    \end{align*}
\end{proof}

\begin{myrem}
    The theory developped in \cite{Bambusi20} tells us that the transformation $\mathcal{T}_{\rm{B}}$ should be $\mathcal{T}_{\rm{B}}(r,s) = \begin{pmatrix} r \\ s \end{pmatrix} + \varepsilon J_{\mu} \nabla G(r,s)$. But the precision we want is $O(\mu\varepsilon + \varepsilon^2)$, and $\varepsilon J_{\mu} = \varepsilon J_{\simp} + O(\mu\varepsilon)$.
\end{myrem}

The transformation \eqref{explicit formulation of T} preserves the Hamiltonian structure associated with the Poisson tensor $J_{\mu}$ at the order of precision $O(\mu\varepsilon + \varepsilon^2)$, see the following proposition.

\begin{mypp}\label{preservation of the Hamiltonian structure}
    Let $\alpha \geq 0$. Let $r,s \in W^{\alpha+4,1}(\mathbb{R})$ Let $\frac{\partial \mathcal{T}_{\rm{B}}}{\partial(r,s)}$ be the Jacobian matrix of the transformation $T$. We have 
    \begin{align*}
        \Big( \frac{\partial \mathcal{T}_{\rm{B}}}{\partial(r,s)} \Big) J_{\mu} \Big(\frac{\partial \mathcal{T}_{\rm{B}}}{\partial(r,s)} \Big)^* = J_{\mu} + (\mu\varepsilon + \varepsilon^2) R, 
    \end{align*}
    where $*$ is the adjoint in $L^2(\mathbb{R})^2$ and $R$ is linear operator such that for any $U \in H^{\alpha+4}(\mathbb{R})\times H^{\alpha+4}(\mathbb{R})$, $|RU|_{H^{\alpha}\times H^{\alpha}} \leq C(\mu_{\max}, |r|_{W^{\alpha+4,1}},|s|_{W^{\alpha+4,1}})|U|_{H^{\alpha+4}\times H^{\alpha+4}}$.
\end{mypp}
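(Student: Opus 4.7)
The plan is to exploit the fact that, by its very construction, $\mathcal{T}_{\rm B} = \mathrm{Id} + \varepsilon J_{\simp}\nabla G$ is the first order Taylor expansion of the time-$\varepsilon$ flow of the Hamiltonian vector field generated by $G$ with respect to the simplified Poisson tensor $J_{\simp}$. Its Jacobian is therefore
\[
\frac{\partial \mathcal{T}_{\rm B}}{\partial(r,s)} = I + \varepsilon A, \qquad A := J_{\simp}\,\mathrm{Hess}(G),
\]
where $\mathrm{Hess}(G)$ denotes the second Fr\'echet derivative of $G$ at $(r,s)$. Expanding gives
\[
\Big(\tfrac{\partial \mathcal{T}_{\rm B}}{\partial(r,s)}\Big) J_\mu \Big(\tfrac{\partial \mathcal{T}_{\rm B}}{\partial(r,s)}\Big)^{\!*} = J_\mu + \varepsilon\bigl(AJ_\mu + J_\mu A^*\bigr) + \varepsilon^2\,AJ_\mu A^*,
\]
so the task reduces to showing that the linear term in $\varepsilon$ is $O(\mu\varepsilon)$ as a bounded operator $H^{\alpha+4}\times H^{\alpha+4} \to H^\alpha\times H^\alpha$, and that the quadratic term is $O(\varepsilon^2)$ in the same sense.

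The key algebraic step is the identity $AJ_{\simp} + J_{\simp}A^* = 0$. This follows from two facts: $J_{\simp}^* = -J_{\simp}$, since $\partial_x$ is skew-adjoint in $L^2(\mathbb R)$, and $\mathrm{Hess}(G)^* = \mathrm{Hess}(G)$, which is the standard $L^2$-symmetry of second Fr\'echet derivatives of the real-valued functional $G$ (and can be checked directly from the explicit formula of $G$ using the skew-adjointness of $\partial^{-1}$ recalled in Definition/Property~\ref{primitive operator}). Combining them, $A^* = -\mathrm{Hess}(G)\,J_{\simp}$, hence $J_{\simp}A^* = -J_{\simp}\mathrm{Hess}(G)\,J_{\simp} = -AJ_{\simp}$. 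Writing $J_\mu = J_{\simp} + (J_\mu - J_{\simp})$ and using the cancellation yields
\[
AJ_\mu + J_\mu A^* = A(J_\mu - J_{\simp}) + (J_\mu - J_{\simp})A^*,
\]
and the diagonal Fourier multiplier $J_\mu - J_{\simp}$, whose symbol is $\pm (\mathrm F_\mu - 1)\partial_x/2$, satisfies $|(\mathrm F_\mu - 1)[u]|_{H^\beta} \lesssim \mu\,|u|_{H^{\beta+2}}$ by the remark following Lemma~\ref{commutator estimates}. Paired with $A$ or $A^*$ this produces the desired factor $\mu$.

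It remains to estimate all composite operators as linear maps in $U$, with coefficients depending on $(r,s)$. The coefficients of $\mathrm{Hess}(G)$ are linear combinations of $r,s,\partial^{-1}r,\partial^{-1}s$ acting by multiplication; using $|\partial^{-1}u|_{W^{\beta,\infty}} \lesssim |u|_{W^{\beta,1}}$ from~\eqref{estimates in Sobolev infinity}, Sobolev product estimates, and the $H^\beta$-boundedness of $\mathrm F_\mu$, one checks that each of $A(J_\mu - J_{\simp})$, $(J_\mu - J_{\simp})A^*$, and $AJ_\mu A^*$ is bounded from $H^{\alpha+4}\times H^{\alpha+4}$ to $H^\alpha \times H^\alpha$ with the required constant $C(\mu_{\max},|r|_{W^{\alpha+4,1}},|s|_{W^{\alpha+4,1}})$. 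The main obstacle is careful bookkeeping of the derivative losses: $J_{\simp}$ and $J_\mu$ each cost one derivative, $(J_\mu - J_{\simp})$ costs two, and the multiplicative coefficients of $\mathrm{Hess}(G)$ involve up to one derivative of $r,s$; summing over the compositions, the budget of $W^{\alpha+4,1}$ on $(r,s)$ and $H^{\alpha+4}$ on $U$ is exactly what is needed to close the estimate.
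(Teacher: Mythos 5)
Your proof is correct and takes a genuinely different, more conceptual route than the paper. The paper expands the triple product entrywise, computes the two linear-in-$\varepsilon$ matrices explicitly, and observes after adding them that each entry is a commutator with $\mathrm{F}_\mu - 1$ (hence $O(\mu)$ by the commutator estimates of Lemma~\ref{commutator estimates}). Your proof instead isolates the structural origin of that cancellation: writing $\tfrac{\partial\mathcal T_{\rm B}}{\partial(r,s)} = I + \varepsilon A$ with $A = J_{\simp}\operatorname{Hess}(G)$, you invoke $J_{\simp}^* = -J_{\simp}$ together with $L^2$-symmetry of the Hessian to get $AJ_{\simp}+J_{\simp}A^* = 0$, so the entire $O(\varepsilon)$ defect lives on $J_\mu - J_{\simp}$, which is manifestly an $O(\mu)$ Fourier multiplier. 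This is precisely the statement that $\mathcal T_{\rm B}$ is a symplectomorphism for $J_{\simp}$ to first order in $\varepsilon$, only approximately so for $J_\mu$. Your approach makes the mechanism transparent and would generalize to higher-order Birkhoff steps without any new computation, while the paper's entrywise computation is more elementary and never leaves the explicit world. One small caveat in your write-up: the entries of $\operatorname{Hess}(G)$ are not all pure multiplication operators; the off-diagonal entries contain composite terms of the form $\partial^{-1}(s\,\cdot\,)$ and $\partial^{-1}(r\,\cdot\,)$, so to run the final estimates one must check, as the paper implicitly does, that these $\partial^{-1}$'s always either collapse against a $\partial_x$ coming from $J_\mu$ or $J_{\simp}$, or act on something that is already in $W^{1,1}$ (via the $L^1$ integrability of $\partial_x r,\partial_x s$ multiplied against an $L^\infty$ argument). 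This is routine but worth flagging explicitly when you tally the derivative budget; the count $H^{\alpha+4}\to H^\alpha$ with coefficients controlled in $W^{\alpha+4,1}$ works out exactly as in the paper.
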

\begin{proof}
    We easily compute the Jacobian matrix of $\mathcal{T}_{\rm{B}}$:
    \begin{align*}
        \frac{\partial \mathcal{T}_{\rm{B}}}{\partial(r,s)} = \begin{pmatrix} 1 + \frac{\varepsilon}{4} s + \frac{\varepsilon}{4} \partial^{-1}(s) \partial_x(\circ) & \frac{\varepsilon}{4} \partial_x (r) \partial^{-1}(\circ) + \frac{\varepsilon}{4}(r+s) \\ \frac{\varepsilon}{4} \partial_x (s) \partial^{-1}(\circ) + \frac{\varepsilon}{4}(r+s) & 1 + \frac{\varepsilon}{4} r + \frac{\varepsilon}{4} \partial^{-1}(r) \partial_x(\circ)\end{pmatrix}.
    \end{align*}
    Its $L^2$ adjoint is
    \begin{align*}
        \Big( \frac{\partial \mathcal{T}_{\rm{B}}}{\partial(r,s)} \Big)^* = \begin{pmatrix} 1 + \frac{\varepsilon}{4}s - \frac{\varepsilon}{4}\partial_x(\partial^{-1}(s) \circ) & \frac{\varepsilon}{4} (r+s) -\frac{\varepsilon}{4} \partial^{-1}(\partial_x (s) \circ) \\ \frac{\varepsilon}{4} (r+s) -\frac{\varepsilon}{4} \partial^{-1}(\partial_x (r) \circ) & 1 + \frac{\varepsilon}{4}r - \frac{\varepsilon}{4}\partial_x(\partial^{-1}(r) \circ).
        \end{pmatrix}
    \end{align*}
    Using the fact that both $\frac{\partial \mathcal{T}_{\rm{B}}}{\partial(r,s)}$ and $\Big(\frac{\partial \mathcal{T}_{\rm{B}}}{\partial(r,s)} \Big)^*$ can be written under the form $I_d + O(\varepsilon)$, we write 
    \begin{align*}
        \Big( \frac{\partial \mathcal{T}_{\rm{B}}}{\partial(r,s)} \Big) J_{\mu} \Big(\frac{\partial \mathcal{T}_{\rm{B}}}{\partial(r,s)} \Big)^* &= \Big[\Big( \frac{\partial \mathcal{T}_{\rm{B}}}{\partial(r,s)} - I_d\Big) + I_d \Big] J_{\mu} \Big[ \Big(\Big(\frac{\partial \mathcal{T}_{\rm{B}}}{\partial(r,s)} \Big)^* - I_d \Big) + I_d \Big] \\
        &= \Big( \frac{\partial \mathcal{T}_{\rm{B}}}{\partial(r,s)} - I_d \Big) J_{\mu} \Big(\Big(\frac{\partial \mathcal{T}_{\rm{B}}}{\partial(r,s)} \Big)^* - I_d \Big) \\
        &+ \Big( \frac{\partial \mathcal{T}_{\rm{B}}}{\partial(r,s)} -I_d \Big) J_{\mu} + J_{\mu} \Big(\Big(\frac{\partial \mathcal{T}_{\rm{B}}}{\partial(r,s)} \Big)^* - I_d \Big) + J_{\mu}.  
    \end{align*}
    The first term is of order $O(\varepsilon^2)$, we will estimate it later. The second and third terms are of order $O(\varepsilon)$. We prove now that their addition is of order $O(\mu\varepsilon)$. We easily compute both terms:
    \begin{align*}
        \Big( \frac{\partial \mathcal{T}_{\rm{B}}}{\partial(r,s)} -I_d \Big) J_{\mu} = \frac{\varepsilon}{8} \begin{pmatrix} -s \mathrm{F}_{\mu}\partial_x[\circ] - \partial^{-1}(s) \mathrm{F}_{\mu}\partial_x^2[\circ] & (r+s)\mathrm{F}_{\mu}\partial_x[\circ] + \partial_x (r) \mathrm{F}_{\mu}[\circ] \\  - (r+s)\mathrm{F}_{\mu}\partial_x[\circ] -\partial_x (s) \mathrm{F}_{\mu}[\circ] & r \mathrm{F}_{\mu}\partial_x[\circ] + \partial^{-1}(r) \mathrm{F}_{\mu}\partial_x^2[\circ] \end{pmatrix},
    \end{align*}
    and 
    \begin{align*}
        J_{\mu} \Big(\Big(\frac{\partial \mathcal{T}_{\rm{B}}}{\partial(r,s)} \Big)^* - I_d \Big) = \frac{\varepsilon}{8} \begin{pmatrix} \mathrm{F}_{\mu}\partial_x [\partial^{-1}(s)\partial_x(\circ)] & -\mathrm{F}_{\mu}\partial_x[(r+s)\circ] + \mathrm{F}_{\mu}[\partial_x (s) \circ] \\ \mathrm{F}_{\mu}\partial_x[(r+s)\circ] - \mathrm{F}_{\mu}[\partial_x (r) \circ] & -\mathrm{F}_{\mu}\partial_x [\partial^{-1}(r)\partial_x(\circ)] \end{pmatrix}.
    \end{align*}
    For now, we look at the first term of both matrices. For any $u \in H^{\alpha+4}(\mathbb{R})$
    \begin{align*}
        &|\mathrm{F}_{\mu}\partial_x [\partial^{-1}(s)\partial_x u] -s \mathrm{F}_{\mu}\partial_x[u] - \partial^{-1}(s) \mathrm{F}_{\mu}\partial_x^2[u]|_{H^{\alpha}} \\
        \leq & |[\mathrm{F}_{\mu},s]\partial_x u|_{H^{\alpha}} + |[\mathrm{F}_{\mu},\partial^{-1}s]\partial_x^2 u|_{H^{\alpha}} \\
        = & |[\mathrm{F}_{\mu}-1;s]\partial_x u|_{H^{\alpha}} + |[\mathrm{F}_{\mu}-1,\partial^{-1}s]\partial_x^2 u|_{H^{\alpha}}.
    \end{align*}
    But using Remark \ref{remark commutator estimates} and \eqref{estimates in Sobolev infinity} we have 
    \begin{align*}
        |[\mathrm{F}_{\mu}-1,\partial^{-1}s]\partial_x^2 u|_{H^{\alpha}}
        &\lesssim \mu |\partial^{-1}(s) \partial_x^2 u|_{H^{\alpha+2}} + |\partial^{-1}s|_{W^{\alpha+2,\infty}}|(\mathrm{F}_{\mu}-1)[\partial_x^2 u|_{H^{\alpha}}\\
        &\lesssim \mu |\partial^{-1}s|_{W^{\alpha+2,\infty}} |\partial_x^2 u|_{H^{\alpha+2}}\\
        &\lesssim \mu |s|_{W^{\alpha+2,1}}|u|_{H^{\alpha+4}}.
    \end{align*}
    Using algebra properties of $H^{\alpha+2}(\mathbb{R})$ and the Sobolev embedding $W^{\alpha+3,1}(\mathbb{R}) \subset H^{\alpha+2}(\mathbb{R})$, we also have
    \begin{align*}
        |[\mathrm{F}_{\mu}-1,s]\partial_x u|_{H^{\alpha}} \lesssim \mu |s|_{H^{\alpha+2}}|u|_{H^{\alpha+3}} \lesssim \mu |s|_{W^{\alpha+3,1}}|u|_{H^{\alpha+3}}.
    \end{align*}
    So that 
    \begin{align*}
        |\mathrm{F}_{\mu}\partial_x [\partial^{-1}(s)\partial_x u] -s \mathrm{F}_{\mu}\partial_x[u] - \partial^{-1}(s) \mathrm{F}_{\mu}\partial_x^2[u]|_{H^{\alpha}} \lesssim \mu |s|_{W^{\alpha+3,1}}|u|_{H^{\alpha+4}}.
    \end{align*}
    The same can be done for the addition of the fourth term of both matrices.
    
    Now we look at the addition of the second term of both matrices. For any $u \in H^{\alpha+3}(\mathbb{R})$ we have
    \begin{align*}
        (r+s)\mathrm{F}_{\mu}[\partial_x u] + \partial_x (r) \mathrm{F}_{\mu}[u] &= \partial_x((r+s)\mathrm{F}_{\mu}[u]) - \partial_x(r+s)\mathrm{F}_{\mu}[u] \\
        &= \partial_x((r+s)\mathrm{F}_{\mu}[u]) - \partial_x (s) \mathrm{F}_{\mu}[u].
    \end{align*}
    Using Lemma \ref{commutator estimates} and the Sobolev embedding $W^{\alpha+4,1}(\mathbb{R}) \subset H^{\alpha+3}(\mathbb{R})$, we get
    \begin{align*}
        &|\partial_x((r+s)\mathrm{F}_{\mu}[u]) - \partial_x (s) \mathrm{F}_{\mu}[u] - \mathrm{F}_{\mu}[\partial_x((r+s) u)] + \mathrm{F}_{\mu}[\partial_x (s) u]|_{H^{\alpha}}\\
        \leq &|\partial_x([\mathrm{F}_{\mu},r+s]u)|_{H^{\alpha}} + |[\mathrm{F}_{\mu},\partial_x s]u|_{H^{\alpha}} \\
        \leq &|[\mathrm{F}_{\mu}-1,r+s]u|_{H^{\alpha+1}} + |[\mathrm{F}_{\mu}-1,\partial_x s]u|_{H^{\alpha}}\\
        \lesssim &\mu (|r|_{H^{\alpha+3}}+|s|_{H^{\alpha+3}})|u|_{H^{\alpha+3}}\\
        \lesssim &\mu (|r|_{W^{\alpha+4,1}} + |s|_{W^{\alpha+4}})|u|_{H^{\alpha+3}}.
    \end{align*}
    The same can be done for the addition of the third term of both matrices. Thus we get for any $U \in H^{\alpha+4}(\mathbb{R})\times H^{\alpha+4}(\mathbb{R})$
    \begin{align*}
         |\Big( \frac{\partial \mathcal{T}_{\rm{B}}}{\partial(r,s)} -I_d \Big) J_{\mu}U + J_{\mu} \Big(\Big(\frac{\partial \mathcal{T}_{\rm{B}}}{\partial(r,s)} \Big)^* - I_d \Big)U|_{H^{\alpha}} \lesssim \mu \varepsilon (|r|_{W^{\alpha+4,1}}+|s|_{W^{\alpha+4,1}})|U|_{H^{\alpha+4}}
    \end{align*}
    
    It remains to estimate $ \Big( \frac{\partial \mathcal{T}_{\rm{B}}}{\partial(r,s)} - I_d \Big) J_{\mu} \Big(\Big(\frac{\partial \mathcal{T}_{\rm{B}}}{\partial(r,s)} \Big)^*-I_d\Big)$. Using the boundedness of $\mathrm{F}_{\mu}$ and the same tools as before, we get for any $U \in H^{\alpha+3}(\mathbb{R})\times H^{\alpha+3}(\mathbb{R})$
    \begin{align*}
        |\Big( \frac{\partial \mathcal{T}_{\rm{B}}}{\partial(r,s)} - I_d \Big) J_{\mu} \Big(\Big(\frac{\partial \mathcal{T}_{\rm{B}}}{\partial(r,s)} \Big)^*-I_d\Big)U|_{H^{\alpha}} \lesssim \varepsilon^2 C(|r|_{W^{\alpha+4,1}},|r|_{W^{\alpha+4,1}})|U|_{H^{\alpha+3}}.
    \end{align*}
\end{proof}

\label{Application of Birkhoff's algorithm}

\subsection{From the Hamiltonian of the Whitham-Boussinesq system under normal form to two decoupled Whitham equations}

In the previous subsections, we proved the existence of a transformation $\mathcal{T}_{\rm{B}}$ such that 
\begin{align*}
    H_{\WW} \circ \mathcal{T}_{\rm{B}} &= L + \varepsilon Z + O(\mu\varepsilon + \varepsilon^2)\\
    &= \int_{\mathbb{R}} (r^2 + s^2) \dx + \varepsilon \frac{1}{2} \int (r^3 + s^3) \dx + O(\mu\varepsilon + \varepsilon^2).
\end{align*}

The equations associated with the Hamiltonian $L+\varepsilon Z$ and the Poisson tensor $J_{\mu}$ are easily computed.

\begin{myppr}\label{property hamilton's equations HWh}
    The Hamilton's equations associated with the normal form $L + \varepsilon Z$ are
    \begin{align}\label{Decoupled Whitham equations}
        \begin{cases}
            \partial_t r + \mathrm{F}_{\mu}[\partial_x r] + \frac{3\varepsilon}{2} \mathrm{F}_{\mu}[r\partial_x r] = 0, \\
            \partial_t s -\mathrm{F}_{\mu}[\partial_x s] - \frac{3\varepsilon}{2} \mathrm{F}_{\mu}[s \partial_x s] = 0.
        \end{cases}
    \end{align}
\end{myppr}
\begin{proof}
    The Hamilton's equations are 
    \begin{align*}
        \partial_t \begin{pmatrix} r \\ s \end{pmatrix} = J_{\mu} \nabla (L + \varepsilon Z).
    \end{align*}
\end{proof}
\begin{mynot}\label{Notation Hamiltonian HWh}
    We write $H_{\Wh}(r,s) \colonequals L(r,s) + \varepsilon Z(r,s)$.
\end{mynot}
\begin{myrem}\label{Remark two decoupled Whitham equations}
    The two equations \eqref{Decoupled Whitham equations} are almost two Whitham equations. One just need to use that for any $\alpha \geq 0$ and any $u \in H^{\alpha+2}(\mathbb{R})$ one has $|(\mathrm{F}_{\mu} - 1)[u]|_{H^\alpha} \lesssim \mu |u|_{H^{\alpha+2}}$ to get the existence of a remainder $(R_1, R_2)$ controllable in Sobolev spaces $H^{\alpha}$ such that
    \begin{align*}
        \begin{cases}
            \partial_t r + \mathrm{F}_{\mu}[\partial_x r] + \frac{3\varepsilon}{2} r\partial_x r = \mu\varepsilon R_1, \\
            \partial_t s -\mathrm{F}_{\mu}[\partial_x s] - \frac{3\varepsilon}{2} s \partial_x s = \mu\varepsilon R_2.
        \end{cases}
    \end{align*}
\end{myrem}

\begin{mylem}\label{Solutions of the Whitham equations in W alpha,1}
    Let $\theta > 1/2$ and $\alpha \geq 0$. Then for any $r_0, s_0 \in W^{\alpha+5,1}(\mathbb{R})$ there exists a time $T >0$ such that both equations \eqref{Decoupled Whitham equations} admit a unique solution $r,s \in \mathcal{C}([0,\frac{T}{\varepsilon}], H^{\alpha+4}(\mathbb{R}))$, with initial conditions $r_0$ and $s_0$, which satisfy for any times $t \in [0,\frac{T}{\epsilon}]$
    \begin{align*}
        \begin{cases}
            |r(t)|_{W^{\alpha,1}} \lesssim (1 + \mu t)^{\theta} C(\mu_{\max},T,|r_0|_{W^{\alpha+5,1}}), \\
            |s(t)|_{W^{\alpha,1}} \lesssim (1 + \mu t)^{\theta} C(\mu_{\max},T,|s_0|_{W^{\alpha+5,1}}).
        \end{cases}
    \end{align*}
    In particular $r,s \in \mathcal{C}([0,\frac{T}{\varepsilon}], W^{\alpha,1}(\mathbb{R}))$ are uniformly bounded in $(\mu,\varepsilon)$ for times $t \in [0,\frac{T}{\max{(\mu,\varepsilon)}}]$ in $W^{\alpha,1}(\mathbb{R})$.  
    
    Moreover, for any $r_0, s_0 \in W^{\alpha+7,1}(\mathbb{R})$, both equations \eqref{Decoupled Whitham equations} admit a unique solution $r,s \in \mathcal{C}^1([0,\frac{T}{\max{(\mu,\varepsilon)}}],W^{\alpha,1}(\mathbb{R})) \cap \mathcal{C}([0,\frac{T}{\max{(\mu,\varepsilon)}}], W^{\alpha+2,1}(\mathbb{R}))$ with initial conditions $r_0$ and $s_0$. 
\end{mylem}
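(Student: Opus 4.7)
The plan is to treat the two equations in \eqref{Decoupled Whitham equations} separately, since they are uncoupled and identical up to a sign change and spatial reflection, and for each to combine (i) a classical local well-posedness result in $H^{\alpha+4}$ with (ii) a Duhamel-based propagation estimate in $W^{\alpha,1}$ obtained from a dispersive kernel bound on the linear Whitham semigroup.

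For step (i), the equation $\partial_t r + \mathrm{F}_{\mu}[\partial_x r] + \tfrac{3\varepsilon}{2}\mathrm{F}_{\mu}[r\partial_x r] = 0$ is symmetric quasilinear of Kato type: $\mathrm{F}_{\mu}\partial_x$ is anti-selfadjoint on $L^2(\mathbb{R})$ and $\mathrm{F}_{\mu}$ is a bounded positive Fourier multiplier, so the nonlinear term $\mathrm{F}_{\mu}[r\partial_x r]$ is compatible with the symmetrization. Standard energy estimates in $H^{\alpha+4}$ using Kato--Ponce commutators yield existence, uniqueness, and a uniform bound on an interval $[0,T/\varepsilon]$ with $T$ depending only on $\mu_{\max}$ and $|r_0|_{H^{\alpha+4}}$. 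Since $W^{1,1}(\mathbb{R}) \hookrightarrow L^\infty(\mathbb{R})\cap L^1(\mathbb{R}) \hookrightarrow L^2(\mathbb{R})$, we have $W^{\alpha+5,1} \hookrightarrow H^{\alpha+4}$, so $T$ and the corresponding $H^{\alpha+4}$ bound are controlled by $|r_0|_{W^{\alpha+5,1}}$; one can also invoke \cite{EhrestromEscherPei2015} directly.

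Step (ii) is the heart of the lemma. Setting $S_{\mu}(t) := e^{-t\mathrm{F}_{\mu}\partial_x}$, Duhamel's formula gives
\[ r(t) = S_{\mu}(t) r_0 - \tfrac{3\varepsilon}{2}\int_0^t S_{\mu}(t-\tau)\mathrm{F}_{\mu}[r\partial_x r](\tau)\, d\tau. \]
The key ingredient is the dispersive bound $|S_{\mu}(t) f|_{W^{\alpha,1}} \leq C_{\theta}(1+\mu t)^{\theta}|f|_{W^{\alpha,1}}$ for any $\theta > 1/2$, which reduces to controlling the $L^1$-norm of the kernel $\mathcal{F}^{-1}[e^{-itF_{\mu}(\xi)\xi}]$ via a stationary-phase/van der Corput analysis: the Whitham phase interpolates between the KdV-type expansion $\xi - \mu\xi^3/6 + \cdots$ at low frequency and $\sqrt{|\xi|/\sqrt\mu}$ at high frequency, with a non-degenerate inflection point at $|\xi|\sim\mu^{-1/2}$ producing, after tracking the natural $\sqrt\mu$-scaling, the growth $(1+\mu t)^{\theta}$ (see \cite{Bulut2016} for closely related estimates). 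Combined with Property \ref{Boundedness of the Poisson tensor}, the algebra structure of $W^{\alpha+1,1}(\mathbb{R})$, and the Sobolev embedding $H^{\alpha+4}\hookrightarrow W^{\alpha+3,\infty}$, the nonlinear term is controlled by $|\mathrm{F}_{\mu}[r\partial_x r]|_{W^{\alpha,1}} \leq C|r|_{H^{\alpha+4}}|r|_{W^{\alpha+2,1}}$. With $|r|_{H^{\alpha+4}}$ uniformly bounded from step (i), Duhamel yields an integral inequality involving only $W^{\cdot,1}$-norms, which I close by a Grönwall bootstrap along the finite chain $W^{\alpha,1}\to W^{\alpha+2,1}\to W^{\alpha+4,1}$, terminating at the regularity controlled by $|r_0|_{W^{\alpha+5,1}}$ (the factor $\varepsilon t \leq T$ on the existence interval keeps each iterate finite). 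The resulting bound $|r(t)|_{W^{\alpha,1}}\lesssim (1+\mu t)^{\theta}C$ is uniform in $(\mu,\varepsilon)$ on $[0,T/\max(\mu,\varepsilon)]$ because $\mu t \leq T$ there.

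For the $\mathcal{C}^1$ statement with $r_0 \in W^{\alpha+7,1}$, I apply the preceding result with $\alpha$ replaced by $\alpha+2$, obtaining $r \in \mathcal{C}([0,T/\max(\mu,\varepsilon)], W^{\alpha+2,1}(\mathbb{R}))$. Reading off the equation, $\partial_t r = -\mathrm{F}_{\mu}\partial_x r - \tfrac{3\varepsilon}{2}\mathrm{F}_{\mu}[r\partial_x r]$ lies in $W^{\alpha,1}(\mathbb{R})$ since $\mathrm{F}_{\mu}:W^{\alpha+1,1}\to W^{\alpha,1}$ by Property \ref{Boundedness of the Poisson tensor} and $W^{\alpha+1,1}$ is an algebra, and continuity of $\partial_t r$ in $W^{\alpha,1}$ follows from continuity of the right-hand side through $r\in\mathcal{C}([0,T/\max(\mu,\varepsilon)],W^{\alpha+2,1})$. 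The principal obstacle throughout is the dispersive $L^1$-kernel bound on $S_{\mu}(t)$: the borderline exponent $1/2$ at a non-degenerate stationary phase forces the relaxation to $\theta > 1/2$, and carefully tracking the Whitham-scale $\mu^{-1/2}$ is what promotes the naive factor $(1+t)^{\theta}$ to $(1+\mu t)^{\theta}$, essential for uniformity on the time interval $[0,T/\max(\mu,\varepsilon)]$.
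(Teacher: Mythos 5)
Your overall Duhamel-based structure matches the paper's, but both key steps conceal genuine gaps that the paper handles differently.

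First, the linear propagator bound. The paper does not use stationary phase or van der Corput: it writes $e^{-itD\mathrm{F}_{\mu}} = e^{-itD(\mathrm{F}_{\mu}-1)}e^{-itD}$, frequency-splits the residual kernel with a cutoff $\chi$, and bounds each piece in $L^1$ by a weighted-$L^2$ Cauchy--Schwarz argument: write $|G|_{L^1}\lesssim 1 + |x^{-\theta}(1-\beta(x))|_{L^2}\,|x^\theta G|_{L^2}$ and estimate $|x^\theta G|_{L^2}$ by interpolating between $|G|_{L^2}\lesssim 1$ and $|xG|_{L^2}=|\partial_\xi\widehat G|_{L^2}\lesssim 1+\mu t$. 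The threshold $\theta>1/2$ is the $L^2$-integrability of $x^{-\theta}$ on $|x|>1$; it is not a stationary-phase exponent. More importantly, your claimed bound $|S_\mu(t)f|_{W^{\alpha,1}}\le C(1+\mu t)^\theta|f|_{W^{\alpha,1}}$ is too strong: the argument (and the paper's statement) loses \emph{two} derivatives, $|e^{-itD\mathrm{F}_\mu}f|_{W^{\alpha,1}}\lesssim(1+\mu t)^\theta|f|_{W^{\alpha+2,1}}$, because the high-frequency piece requires two integrations by parts (the factor $(1-\chi(\xi))/(i\xi)^2$) in order for the residual convolution kernel and its $\xi$-derivative to be square-integrable.

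Second, and consequentially, your nonlinear closure does not work as described. Once one accounts for the two-derivative loss in the kernel bound, the Duhamel term requires control of $|\mathrm{F}_\mu[r\partial_x r]|_{W^{\alpha+2,1}}$, and your product estimate $|\mathrm{F}_\mu[r\partial_x r]|_{W^{\alpha,1}}\lesssim |r|_{H^{\alpha+4}}|r|_{W^{\alpha+2,1}}$ shifted up two indices then needs $|r|_{W^{\alpha+4,1}}$, and so on: the chain $W^{\alpha,1}\to W^{\alpha+2,1}\to W^{\alpha+4,1}\to\cdots$ never terminates within the data class $W^{\alpha+5,1}$. The paper sidesteps this entirely by never creating a circular $W^{\cdot,1}$ dependence: it writes $r\partial_x r=\tfrac12\partial_x(r^2)$, so $|r\partial_x r|_{W^{\alpha+3,1}}\le\tfrac12|r^2|_{W^{\alpha+4,1}}$, and for each $k\le\alpha+4$, Cauchy--Schwarz gives $|\partial^j r\cdot\partial^{k-j}r|_{L^1}\le|\partial^j r|_{L^2}|\partial^{k-j}r|_{L^2}$, so $|r^2|_{W^{\alpha+4,1}}\lesssim|r|_{H^{\alpha+4}}^2$. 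Since the $H^{\alpha+4}$ bound is already in hand from step (i) independently of any $W^{\cdot,1}$ control, the Duhamel inequality closes in one pass with no Gr\"onwall bootstrap. Your proposal is missing this bilinear $H^s\cdot H^s\subset W^{s,1}$ step; without it the argument does not close.

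The $\mathcal{C}^1$ part of your proposal (apply the result at level $\alpha+2$ and read the equation) does match the paper's treatment.
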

\begin{proof}
    We have the Sobolev embeddings $W^{\alpha+5,1}(\mathbb{R}) \subset H^{\alpha+4}(\mathbb{R})$ and the equations \eqref{Decoupled Whitham equations} are well-posed in these Sobolev spaces (see \cite{EhrestromEscherPei2015}). So there exists $T >0$ such that the latter equations admit a unique solution in $\mathcal{C}([0,\frac{T}{\varepsilon}],H^{\alpha+2}(\mathbb{R}))$. 
    
    We do the rest of the reasoning for $r$. We use the Duhamel formula to say that
    \begin{align*}
        r(t) = e^{-itD\mathrm{F}_{\mu}}r_0 + \frac{3\varepsilon}{2} \int_{0}^t e^{-i(t-t')D\mathrm{F}_{\mu}} \mathrm{F}_{\mu}[r \partial_x r] \dt'.
    \end{align*}
    We prove first that $|e^{-itD\mathrm{F}_{\mu}} r_0|_{W^{\alpha,1}} \lesssim (1+\mu t)^{\theta} |r_0|_{W^{\alpha+2,1}}$.
    
    We begin by doing the same reasoning as in the proof of Property \ref{Boundedness of the Poisson tensor}. We remark that we only need to prove the previous estimate in $L^1(\mathbb{R})$. Let $\chi \in \mathcal{C}^{\infty}((-2,2))$ be a cutoff function such that $\chi(\xi) = 1$ for $|\xi|\leq 1$. We decompose $e^{-itD\mathrm{F}_{\mu}} r_0 = e^{-it D (\mathrm{F}_{\mu}-1)} e^{-itD}r_0$ into
    \begin{align*}
        e^{-itD(\mathrm{F}_{\mu}-1)} e^{-itD}r_0 (x) &= \mathcal{F}^{-1}[e^{-it\xi (F_{\mu}(\xi)-1)}\chi(\xi)]*(e^{-itD}r_0) \\
        &+ \mathcal{F}^{-1}[e^{_it\xi (F_{\mu}(\xi)-1)}\frac{1-\chi(\xi)}{(i\xi)^2}]*\partial_y^2 e^{-itD} r_0  \colonequals I_1 + I_2.
    \end{align*}
    \begin{itemize}
        \item We prove here that $|I_1|_{L^1} \lesssim (1+\mu t)^{\frac{1}{2}+} |r_0|_{L^1}$.
        Using Young's inequality, we get
        \begin{align*}
            |I_1|_{L^1} \leq |\mathcal{F}^{-1}[e^{-it\xi (F_{\mu}(\xi)-1)}\chi(\xi)]|_{L^1} |e^{-itD}r_0|_{L^1} \leq |\mathcal{F}^{-1}[e^{-it\xi (F_{\mu}(\xi)-1)}\chi(\xi)]|_{L^1} |r_0|_{L^1}.
        \end{align*}
        Now we pose $G(x) = \mathcal{F}^{-1}[e^{-it\xi (F_{\mu}(\xi)-1)}\chi(\xi)]$. Let $\beta \in \mathcal{C}^{\infty}((-2,2))$ be a cutoff function such that $\beta(x) = 1$ for $|x| \leq 1$. We have for any $\theta > 1/2$ 
        \begin{align*}
            |G|_{L^1} &\leq |\beta G|_{L^1} + |(1-\beta) G|_{L^1} \\
            &\lesssim 1 + |\frac{1-\beta(x)}{x^{\theta}}|_{L^2}|x^{\theta} G|_{L^2}.
        \end{align*}
        We will prove the inequality $|xG|_{L^2} \lesssim 1 + \mu t$ and interpolate it with the obvious one $|G|_{L^2} \lesssim 1$. 
        
        We remark that 
        \begin{align*}
            |xG|_{L^2} = |\partial_{\xi} \widehat{G}|_{L^2} = |\partial_{\xi}(e^{-it\xi(F_{\mu}(\xi)-1)} \chi(\xi))|_{L^2}.
        \end{align*}
        But we have the two following estimates on $F_{\mu}$: for any $\xi \in \mathbb{R}$
        \begin{align}\label{estimates on Fmu}
                |F_{\mu}(\xi)-1| \lesssim \mu \xi^2, \ \ \ \ 
                |F_{\mu}'(\xi)| \lesssim \mu \xi^2.
        \end{align}
        So 
        \begin{align*}
            |\partial_{\xi}(e^{-it\xi(F_{\mu}(\xi)-1)} \chi(\xi))|_{L^2} &\leq t|(F_{\mu}(\xi)-1) \chi(\xi)|_{L^2} + t|\xi F_{\mu}'(\xi) \chi(\xi)|_{L^2} + |\chi'(\xi)|_{L^2} \lesssim 1+\mu t.
        \end{align*}
        Then Hölder's inequality gives us
        \begin{align*}
            |x^{\theta}G|_{L^2} = |(x G)^{\theta} G^{1-\theta}|_{L^2} \leq |(xG)^{\theta}|_{L^p}|G^{1-\theta}|_{L^q}, 
        \end{align*}
        with $\frac{1}{2} = \frac{1}{p} + \frac{1}{q}$. It remains to take $p= \frac{2}{\theta}$ and $q=\frac{2}{1-\theta}$ to get
        \begin{align*}
            |x^{\theta}G|_{L^2} \leq |x G|_{L^2}^{\theta}|G|_{L^2}^{1-\theta} \lesssim (1+\mu t)^{\theta}.
        \end{align*}
        \item We prove here that $|I_2|_{L^1} \lesssim (1+\mu t)^{\theta}|r_0|_{W^{2,1}}$. Again we use Young's inequality to get 
        \begin{align*}
            |I_2|_{L^1} \leq |\mathcal{F}^{-1}[e^{_it\xi (F_{\mu}(\xi)-1)}\frac{1-\chi(\xi)}{(i\xi)^2}]|_{L^1}|r_0|_{W^{2,1}}.
        \end{align*}
        Then the rest of the proof is the same, with a little exception: here, using the estimates \eqref{estimates on Fmu}, we have
        \begin{align*}
            &|\partial_{\xi}(e^{-it\xi (F_{\mu}(\xi)-1)}\frac{1-\chi(\xi)}{\xi^2})|_{L^2} \\
            \leq &|\frac{t(F_{\mu}(\xi)-1)(1-\chi(\xi))\xi^2 + t\xi F_{\mu}'(\xi)(1-\chi(\xi))\xi^2-\chi'(\xi)\xi^2 - 2(1-\chi(\xi))\xi}{\xi^4}|_{L^2}\\
            \lesssim& 1 + \mu t.
        \end{align*}
    \end{itemize}
    Now we prove $|\int_0^t e^{-i(t-t')D\mathrm{F}_{\mu}}\mathrm{F}_{\mu}[r\partial_x r] \dt'|_{W^{\alpha,1}} \lesssim (1+\mu t)^{\theta} \int_0^t |r|^2_{H^{\alpha+4}} \dt'$, so that for all times $t \in [0,\frac{T}{\varepsilon}]$ one has
    \begin{align*}
        |\frac{3\varepsilon}{2} \int_0^t e^{-i(t-t')D\mathrm{F}_{\mu}}\mathrm{F}_{\mu}[r\partial_x r] \dt'|_{W^{\alpha,1}} &\leq (1+\mu t)^{\theta}C(\mu_{\max},T,|r_0|_{H^{\alpha+4}}) \\
        &\leq (1 + \mu t)^{\theta} C(\mu_{\max},T,|r_0|_{W^{\alpha+5,1}}).
    \end{align*} 
    Using what we did before, we have
    \begin{align*}
        |\int_0^t e^{-i(t-t')D\mathrm{F}_{\mu}}\mathrm{F}_{\mu}[r\partial_x r] \dt'|_{W^{\alpha,1}} & \leq \int_0^t |e^{-i(t-t')D\mathrm{F}_{\mu}}\mathrm{F}_{\mu}[r\partial_x r]|_{W^{\alpha,1}} \dt' \\
        & \lesssim \int_0^t (1 + \mu (t-t'))^{\theta} |\mathrm{F}_{\mu}[r\partial_x r]|_{W^{\alpha+2,1}} \dt'.
    \end{align*}
    Using the continuity of $\mathrm{F}_{\mu}$ from $W^{\alpha+3,1}(\mathbb{R})$ to $W^{\alpha+2,1}(\mathbb{R})$ (see Property \ref{Boundedness of the Poisson tensor}), we get
    \begin{align*}
        |\int_0^t e^{-i(t-t')D\mathrm{F}_{\mu}}\mathrm{F}_{\mu}[r\partial_x r] \dt'|_{W^{\alpha,1}} &\lesssim (1+\mu t)^{\theta} \int_0^t |r\partial_x r|_{W^{\alpha+3,1}} \dt' \\
        &\lesssim (1+\mu t)^{\theta} \int_0^t |r|_{H^{\alpha+4}}^2 \dt'.
    \end{align*}
    It remains to prove that taking $r_0$ and $s_0$ in $W^{\alpha+7}(\mathbb{R})$, $\partial_t r$ and $\partial_t s$ are in $W^{\alpha,1}(\mathbb{R})$. We do the reasoning for $r$. From what we proved above, $r$ is in $W^{\alpha+2,1}(\mathbb{R})$. It is also solution of the equation
    \begin{align*}
        \partial_t r + \mathrm{F}_{\mu} [\partial_x r] + \frac{3\varepsilon}{2}\mathrm{F}_{\mu}[r\partial_x r] = 0.
    \end{align*}
    So that
    \begin{align*}
        |\partial_t r|_{W^{\alpha,1}} \lesssim &|\mathrm{F}_{\mu} [\partial_x r]|_{W^{\alpha,1}} + |\mathrm{F}_{\mu}[r\partial_x r]|_{W^{\alpha,1}} \\
        \leq &|r|_{W^{\alpha+2,1}} + |r\partial_x r|_{W^{\alpha+1,1}} \lesssim |r|^2_{W^{\alpha+2,1}}. 
    \end{align*}
\end{proof}

\begin{mythm}\label{From HWh to HBW}
    Let $\alpha \geq 0$. Let $r,s \in \mathcal{C}^1([0,\frac{T}{\varepsilon}], W^{\alpha+6,1}(\mathbb{R}))$ be solutions of the equations \eqref{Decoupled Whitham equations}. Let also $\mathcal{T}_{\rm{B}}$ be the transformation \eqref{explicit formulation of T}, and define:
    \begin{align}\label{rh and sh}
        \begin{pmatrix} r_{\rm{c}} \\ s_{\rm{c}} \end{pmatrix} \colonequals \mathcal{T}_{\rm{B}}(r,s).
    \end{align}
    Then, there exists $R \in \mathcal{C}^1([0,\frac{T}{\varepsilon}], H^{\alpha}(\mathbb{R})\times H^{\alpha})(\mathbb{R})$, such that one has:
    \begin{align*}
        \partial_t \begin{pmatrix} r_{\rm{c}} \\ s_{\rm{c}} \end{pmatrix} = J_{\mu} \nabla (H_{\BW}) (r_{\rm{c}},s_{\rm{c}}) + (\mu\varepsilon + \varepsilon^2) R, \ \ \forall t \in [0,\frac{T}{\varepsilon}], 
    \end{align*}
    where $H_{\BW}$ is the Hamiltonian defined in \eqref{Hamiltonian in r and s} and $|R|_{H^{\alpha}\times H^{\alpha}} \leq C(\mu_{\max},|r|_{W^{\alpha+6,1}},|s|_{W^{\alpha+6,1}})$. In particular, $R$ is uniformly bounded in $(\mu,\varepsilon)$ for times $t \in [0,\frac{T}{\max{(\mu,\varepsilon)}}]$ in $H^{\alpha}(\mathbb{R})\times H^{\alpha}(\mathbb{R})$.
\end{mythm}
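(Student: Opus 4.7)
The plan is to assemble three results already at our disposal: the Whitham equations \eqref{Decoupled Whitham equations} are by Property \ref{property hamilton's equations HWh} exactly Hamilton's equations for $H_{\Wh}=L+\varepsilon Z$ with respect to the Poisson tensor $J_\mu$; Proposition \ref{proposition explicit formulation of T} provides the normal form identity $H_{\BW}\circ\mathcal{T}_{\rm{B}}=H_{\Wh}+O(\mu\varepsilon+\varepsilon^2)$; and Proposition \ref{preservation of the Hamiltonian structure} ensures that $\mathcal{T}_{\rm{B}}$ is almost symplectic in the sense that $(D\mathcal{T}_{\rm{B}})J_\mu(D\mathcal{T}_{\rm{B}})^{*}=J_\mu+(\mu\varepsilon+\varepsilon^2)R'$. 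The strategy is to differentiate $(r_{\rm{c}},s_{\rm{c}})=\mathcal{T}_{\rm{B}}(r,s)$ in time, substitute the Whitham equations, transport the gradient back through $\mathcal{T}_{\rm{B}}$, and finally apply the almost symplecticity estimate.

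First I would apply the chain rule to \eqref{rh and sh}, obtaining
\begin{align*}
    \partial_t\begin{pmatrix}r_{\rm{c}}\\ s_{\rm{c}}\end{pmatrix}=\bigl(D\mathcal{T}_{\rm{B}}(r,s)\bigr)\,\partial_t\begin{pmatrix}r\\ s\end{pmatrix}=\bigl(D\mathcal{T}_{\rm{B}}(r,s)\bigr)\,J_\mu\,\nabla H_{\Wh}(r,s),
\end{align*}
where the second equality uses Property \ref{property hamilton's equations HWh}. Next, writing the content of Proposition \ref{proposition explicit formulation of T} in the form $H_{\BW}(\mathcal{T}_{\rm{B}}(r,s))=H_{\Wh}(r,s)+R_0(r,s)$ with $R_0=O(\mu\varepsilon+\varepsilon^2)$ and differentiating both sides in $(r,s)$ via the chain rule gives
\begin{align*}
    \bigl(D\mathcal{T}_{\rm{B}}(r,s)\bigr)^{*}\nabla H_{\BW}(r_{\rm{c}},s_{\rm{c}})=\nabla H_{\Wh}(r,s)+\nabla R_0(r,s).
\end{align*}
Substituting $\nabla H_{\Wh}$ from this identity into the previous display and invoking Proposition \ref{preservation of the Hamiltonian structure}, we arrive at
\begin{align*}
    \partial_t\begin{pmatrix}r_{\rm{c}}\\ s_{\rm{c}}\end{pmatrix}=J_\mu\nabla H_{\BW}(r_{\rm{c}},s_{\rm{c}})+(\mu\varepsilon+\varepsilon^2)R'\,\nabla H_{\BW}(r_{\rm{c}},s_{\rm{c}})-\bigl(D\mathcal{T}_{\rm{B}}(r,s)\bigr)\,J_\mu\,\nabla R_0(r,s),
\end{align*}
which is precisely the conclusion with $R$ given by the sum of the last two terms after division by $\mu\varepsilon+\varepsilon^2$.

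The main obstacle is the quantitative bound on $R$ in $H^{\alpha}\times H^{\alpha}$, which requires careful tracking of derivative losses. Since the one problematic term in $\mathcal{T}_{\rm{B}}$ is $\partial_x(\cdot)\,\partial^{-1}(\cdot)$, we have $(r_{\rm{c}},s_{\rm{c}})\in W^{\alpha+5,1}(\mathbb{R})^{2}\subset H^{\alpha+4}(\mathbb{R})^{2}$, so by the algebra property the polynomial $\nabla H_{\BW}(r_{\rm{c}},s_{\rm{c}})$ lies in the same spaces; Proposition \ref{preservation of the Hamiltonian structure} then controls $R'\,\nabla H_{\BW}(r_{\rm{c}},s_{\rm{c}})$ in $H^{\alpha}$ by $C(\mu_{\max},|r|_{W^{\alpha+5,1}},|s|_{W^{\alpha+5,1}})$. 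For the second piece, inspecting the proof of Proposition \ref{proposition explicit formulation of T} one sees that $R_0=H_{\BW}\circ\mathcal{T}_{\rm{B}}-H_{\Wh}$ is an explicit polynomial of order $\varepsilon^{2}$ in $r,s$ and $\partial^{-1}(r),\partial^{-1}(s)$ (the $O(\varepsilon)$ corrections cancel by the very construction of $\mathcal{T}_{\rm{B}}$), so $\nabla R_0(r,s)/\varepsilon^{2}$ is almost smooth on the scale $\{W^{\beta,1}(\mathbb{R})^{2}\}_{\beta}$; the additional $J_\mu$ and $D\mathcal{T}_{\rm{B}}$ each cost one derivative, and the worst $\partial^{-1}$ contributions are absorbed by the $W^{\cdot,1}$ norms of $r,s$ through the estimates \eqref{estimates in Sobolev infinity}, yielding altogether a bound in $H^{\alpha}$ by $C(\mu_{\max},|r|_{W^{\alpha+6,1}},|s|_{W^{\alpha+6,1}})$. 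The uniform boundedness of $R$ on $[0,T/\max(\mu,\varepsilon)]$ then follows directly from Lemma \ref{Solutions of the Whitham equations in W alpha,1}, which controls $|r|_{W^{\alpha+6,1}}$ and $|s|_{W^{\alpha+6,1}}$ uniformly in $(\mu,\varepsilon)$ on that interval.
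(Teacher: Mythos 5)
Your proposal follows essentially the same route as the paper: chain rule in time on $\mathcal{T}_{\rm{B}}(r,s)$, substitution of Hamilton's equations for $H_{\Wh}$ via Property \ref{property hamilton's equations HWh}, transport of the gradient through $(D\mathcal{T}_{\rm{B}})^*$ using $H_{\BW}\circ\mathcal{T}_{\rm{B}}=H_{\Wh}+O(\varepsilon^2)$, and then Proposition \ref{preservation of the Hamiltonian structure} to replace $(D\mathcal{T}_{\rm{B}})J_\mu(D\mathcal{T}_{\rm{B}})^*$ by $J_\mu$ up to an $O(\mu\varepsilon+\varepsilon^2)$ operator. The only slip is an intermediate off-by-one in the regularity bookkeeping (you write $W^{\alpha+5,1}$ where the bound on $\nabla H_{\BW}(r_{\rm c},s_{\rm c})$ in $H^{\alpha+4}$ in fact requires $|r|_{W^{\alpha+6,1}}$, $|s|_{W^{\alpha+6,1}}$, because $|\partial_x r\,\partial^{-1}s|_{H^{\alpha+4}}\lesssim|s|_{W^{\alpha+4,1}}|r|_{W^{\alpha+6,1}}$), but your final stated constant matches the theorem.
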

\begin{proof}
    The existence of the solutions $(r,s)$ is given by the previous lemma \ref{Solutions of the Whitham equations in W alpha,1}.
    
    We write $\mathcal{T}_{\rm{B}}(r,s) = \begin{pmatrix} T_1(r,s) \\ T_2(r,s) \end{pmatrix} = \begin{pmatrix}  r + \frac{\varepsilon}{4} \partial_x r \partial^{-1}s + \frac{\varepsilon}{4} rs + \frac{\varepsilon}{8}s^2 \\ s + \frac{\varepsilon}{4} \partial_x s \partial^{-1}r + \frac{\varepsilon}{4} rs + \frac{\varepsilon}{8}r^2 \end{pmatrix}$. By definition of $H_{\BW}$ (see \eqref{Hamiltonian in r and s}) and $H_{\Wh}$ (see Notation \ref{Notation Hamiltonian HWh}), and the computations in the proof of Proposition \ref{proposition explicit formulation of T} we have
    \begin{align*}
        H_{\BW}(\mathcal{T}_{\rm{B}}(r,s)) &= \int_{\mathbb{R}} (T_1(r,s)^2 + T_2(r,s)^2) \dx + \frac{\varepsilon}{2} \int (T_1(r,s)^3 + T_2(r,s)^3) \dx \\
        &-\frac{\varepsilon}{2} \int_{\mathbb{R}} (T_1(r,s)^2 T_2(r,s) + T_1(r,s) T_2(r,s)^2) \dx \\
        &= H_{\Wh}(r,s) \\
        &+ \int_{\mathbb{R}} [(\frac{\varepsilon}{4} \partial_x (r) \partial^{-1}(s) + \frac{\varepsilon}{4} rs + \frac{\varepsilon}{8}s^2)^2 + (\frac{\varepsilon}{4} \partial_x (s) \partial^{-1}(r) + \frac{\varepsilon}{4} rs + \frac{\varepsilon}{8}r^2)^2] \dx \\
        &+ \frac{\varepsilon}{2} \int_{\mathbb{R}} (T_1(r,s)^3-r^3 + T_2(r,s)^3-s^3) \dx \\
        &- \frac{\varepsilon}{2} \int_{\mathbb{R}} (T_1(r,s)^2T_2(r,s) - r^2s + T_1(r,s)T_2(r,s)^2 - rs^2) \dx 
    \end{align*}
    So $H_{\BW}(\mathcal{T}_{\rm{B}}(r,s)) = H_{\Wh}(r,s) + \varepsilon^2 \int_{\mathbb{R}} p(r,s,\partial_x (r) \partial^{-1}(s), \partial_x (s)\partial^{-1}(r)) \dx$, where $p:\mathbb{R}^4 \to \mathbb{R}$ is polynomial.
    
    But $(r_{\rm{c}},s_{\rm{c}}) = \mathcal{T}_{\rm{B}}(r,s)$. So
    \begin{align*}
        \partial_t \begin{pmatrix} r_{\rm{c}} \\ s_{\rm{c}}\end{pmatrix} = \frac{\partial \mathcal{T}_{\rm{B}}}{\partial(r,s)} \partial_t \begin{pmatrix} r \\ s \end{pmatrix}, 
    \end{align*}
    where $\frac{\partial \mathcal{T}_{\rm{B}}}{\partial(r,s)}$ is the Jacobian matrix of $\mathcal{T}_{\rm{B}}(r,s)$ computed in the proof of Proposition \ref{preservation of the Hamiltonian structure}. Since $r$ and $s$ are solutions of the Hamilton's equations associated with $H_{\Wh}$, we have
    \begin{align*}
        \partial_t \begin{pmatrix} r_{\rm{c}} \\ s_{\rm{c}}\end{pmatrix} &= \frac{\partial \mathcal{T}_{\rm{B}}}{\partial(r,s)} J_{\mu} \nabla (H_{\Wh})(r,s) \\
        &= \frac{\partial \mathcal{T}_{\rm{B}}}{\partial(r,s)} J_{\mu} \nabla(H_{\BW}(\mathcal{T}_{\rm{B}}(r,s)) - \varepsilon^2 \frac{\partial \mathcal{T}_{\rm{B}}}{\partial(r,s)} J_{\mu} \nabla \int_{\mathbb{R}} p(r,s,\partial_x r \partial^{-1}s, \partial_x s\partial^{-1}r) \dx \\
        &= \frac{\partial \mathcal{T}_{\rm{B}}}{\partial(r,s)} J_{\mu} \Big(\frac{\partial \mathcal{T}_{\rm{B}}}{\partial(r,s)}\Big)^* \nabla(H_{\BW})(r_{\rm{c}},s_{\rm{c}}) - \varepsilon^2 \frac{\partial \mathcal{T}_{\rm{B}}}{\partial(r,s)} J_{\mu} \nabla \int_{\mathbb{R}} p(r,s,\partial_x r \partial^{-1}s, \partial_x s\partial^{-1}r) \dx.
    \end{align*}
    Moreover by Proposition \ref{preservation of the Hamiltonian structure}, we know that $\frac{\partial \mathcal{T}_{\rm{B}}}{\partial(r,s)} J_{\mu} \Big(\frac{\partial \mathcal{T}_{\rm{B}}}{\partial(r,s)}\Big)^* = J_{\mu} + (\mu\varepsilon + \varepsilon^2)R$, with $|RU|_{H^{\alpha}\times H^{\alpha}} \leq C(\mu_{\max}, |r|_{W^{\alpha+4,1}},|s|_{W^{\alpha+4,1}})|U|_{H^{\alpha+4}}$ for any $U \in H^{\alpha+4}(\mathbb{R})\times H^{\alpha+4}(\mathbb{R})$. So, here, we need to estimate $\nabla(H_{\BW})(r_{\rm{c}},s_{\rm{c}})$ in $H^{\alpha+4}(\mathbb{R})\times H^{\alpha+4}(\mathbb{R})$.
    \begin{align*}
        \nabla(H_{\BW})(\mathcal{T}_{\rm{B}}(r,s)) = \begin{pmatrix} 2 T_1(r,s) + \frac{3\varepsilon}{2} T_1(r,s)^2 - \varepsilon T_1(r,s)T_2(r,s) - \frac{\varepsilon}{2} T_2(r,s)^2 \\ 2 T_2(r,s) + \frac{3\varepsilon}{2} T_2(r,s)^2 - \varepsilon T_1(r,s)T_2(r,s) - \frac{\varepsilon}{2} T_1(r,s)^2 \end{pmatrix}.
    \end{align*}
    Combining the algebra properties of $H^{\alpha+4}(\mathbb{R})$ with the estimates 
    \begin{align*}
    \begin{cases}
        |\partial_x (r) \partial^{-1}(s)|_{H^{\alpha+4}}\lesssim |\partial^{-1}s|_{W^{\alpha+4,\infty}}|\partial_x r|_{H^{\alpha+4}} \lesssim  |s|_{W^{\alpha+4,1}}|r|_{H^{\alpha+5}} \lesssim |s|_{W^{\alpha+4,1}}|r|_{W^{\alpha+6,1}}, \\
        |\partial_x (s)  \partial^{-1}(r)|_{H^{\alpha+4}}\lesssim |r|_{W^{\alpha+4,1}}|s|_{W^{\alpha+6,1}}
    \end{cases}
    \end{align*}
    (see \eqref{estimates in Sobolev infinity}), we have 
    \begin{align*}
        |\nabla(H_{\BW})(\mathcal{T}_{\rm{B}}(r,s))|_{H^{\alpha+4}} \leq C(||r|_{W^{\alpha+6,1}},|s|_{W^{\alpha+6,1}}).
    \end{align*}
    It remains to prove that  $\frac{\partial \mathcal{T}_{\rm{B}}}{\partial(r,s)} J_{\mu} \nabla \int_{\mathbb{R}} p(r,s,\partial_x (r) \partial^{-1}(s), \partial_x (s)\partial^{-1}(r)) \dx$ is in $\mathcal{C}^1([0,\frac{T}{\varepsilon}],H^{\alpha}(\mathbb{R})\times H^{\alpha}(\mathbb{R}))$.
    
    The polynomial $p$ is a linear combination of terms of the form $r^{n_1}s^{n_2}(\partial_x (r) \partial^{-1}(s))^{n_3} (\partial_x (s) \partial^{-1}(r))^{n_4}$ where $n_1, n_2, n_3$ and $n_4$ are non-negative integers such that $n_1 + n_2 + n_3 + n_4 \geq 2$. Moreover
    \begin{align*}
        J_{\mu} \nabla \int_{\mathbb{R}} r^{n_1}s^{n_2}(\partial_x (r) \partial^{-1}(s))^{n_3} (\partial_x (s) \partial^{-1}(r))^{n_4} \dx = \begin{pmatrix} M_1 \\ M_2 \end{pmatrix},
    \end{align*}
    where
    \begin{align*}
        \begin{cases}
            M_1 = &-\frac{1}{2}\mathrm{F}_{\mu} \Big[\partial_x \Big( n_1 r^{n_1-1} s^{n_2}(\partial_x (r) \partial^{-1}(s))^{n_3} (\partial_x (s) \partial^{-1}(r))^{n_4}\Big) \Big] \\
            &+ \frac{1}{2} \mathrm{F}_{\mu}\Big[\partial_x^2 \Big( r^{n_1} s^{n_2} n_3(\partial_x r)^{n_3-1}(\partial^{-1}s)^{n_3} (\partial_x (s) \partial^{-1}(r))^{n_4} \Big)\Big]\\
            &+ \frac{1}{2} \mathrm{F}_{\mu}\Big[ r^{n_1} s^{n_2} (\partial_x (r) \partial^{-1} (s))^{n_3} (\partial_x s)^{n_4} n_4 (\partial^{-1} r)^{n_4-1} \Big], \\
            M_2 = &\frac{1}{2}\mathrm{F}_{\mu}\Big[\partial_x\Big(r^{n_1} n_2 s^{n_2-1}(\partial_x (r) \partial^{-1}(s))^{n_3} (\partial_x (s) \partial^{-1}(r))^{n_4}\Big)\Big] \\
            &- \frac{1}{2}\mathrm{F}_{\mu}\Big[r^{n_1} s^{n_2} (\partial_x r)^{n_3}n_3(\partial^{-1}s)^{n_3-1} (\partial_x (s) \partial^{-1}(r))^{n_4} \Big] \\
            &-\frac{1}{2} \mathrm{F}_{\mu}\Big[\partial_x^2 \Big( r^{n_1} s^{n_2} (\partial_x (r) \partial^{-1} (s))^{n_3} n_4(\partial_x s)^{n_4-1}(\partial^{-1} r)^{n_4} \Big)\Big],
        \end{cases}
    \end{align*}
    where we used $\partial_x \partial^{-1}u = u$ when $u \in L^1(\mathbb{R})$ (see Definition/Property \ref{primitive operator}).
    The property $n_1 + n_2 + n_3 + n_4 \geq 2$ implies that the terms containing $\partial^{-1}s$ or $\partial^{-1}r$ are always multiplied by a function in a Sobolev space $W^{\beta,1}(\mathbb{R})$ with $\beta \geq 0$.
    
    We recall now the expression of $\frac{\partial \mathcal{T}_{\rm{B}}}{\partial(r,s)}$:
    \begin{align*}
        \frac{\partial \mathcal{T}_{\rm{B}}}{\partial(r,s)} = \begin{pmatrix} 1 + \frac{\varepsilon}{4} s + \frac{\varepsilon}{4} \partial^{-1}(s) \partial_x(\circ) & \frac{\varepsilon}{4} \partial_x (r) \partial^{-1}(\circ) + \frac{\varepsilon}{4}(r+s) \\ \frac{\varepsilon}{4} \partial_x (s) \partial^{-1}(\circ) + \frac{\varepsilon}{4}(r+s) & 1 + \frac{\varepsilon}{4} r + \frac{\varepsilon}{4} \partial^{-1}(r) \partial_x(\circ)\end{pmatrix}.
    \end{align*}
    From the above computations we get
    \begin{align}\label{Rest}
        &\frac{\partial \mathcal{T}_{\rm{B}}}{\partial(r,s)} J_{\mu} \nabla \int_{\mathbb{R}} p(r,s,\partial_x (r) \partial^{-1}(s), \partial_x (s)\partial^{-1}(r)) \dx \\
        = &\begin{pmatrix} M_1 + \frac{\varepsilon}{4} s M_1 + \frac{\varepsilon}{4} \partial^{-1}(s) \partial_x(M_1) + \frac{\varepsilon}{4} \partial_x (r) \partial^{-1}(M_2) + \frac{\varepsilon}{4}(r+s)M_2 \\ \frac{\varepsilon}{4} \partial_x (s) \partial^{-1}(M_1) + \frac{\varepsilon}{4}(r+s)M_1 + M_2 + \frac{\varepsilon}{4} r M_2 + \frac{\varepsilon}{4} \partial^{-1}(r) \partial_x(M_2) \end{pmatrix},
    \end{align}
    We estimate the terms thus obtained. Using product estimates \ref{product estimate} and \eqref{estimates in Sobolev infinity} we have
    \begin{align*}
        \begin{cases}
            |s M_1|_{H^{\alpha}} \lesssim |s|_{H^{\alpha+1}}|M_1|_{H^{\alpha}}, \\
            |\partial^{-1}(s)\partial_x(M_1)|_{H^{\alpha}} \lesssim |\partial^{-1}s|_{W^{\alpha,\infty}}|M_1|_{H^{\alpha+1}} \lesssim |s|_{W^{\alpha,1}}|M_1|_{H^{\alpha+1}}, \\
            |\partial_x(r)\partial^{-1}(M_1)|_{H^{\alpha}} \lesssim |r|_{H^{\alpha+1}}|\partial^{-1}M_1|_{W^{\alpha,\infty}} \lesssim  |r|_{W^{\alpha+2,1}}|M_1|_{W^{\alpha,1}}.
        \end{cases}
    \end{align*}
    Idem for the terms with $M_2$. So that it only remains to estimate $M_1$ and $M_2$. We do it for $M_1$. Using product estimates \ref{product estimate} and $n_1 + n_2 + n_3 +n_4 \geq 2$, we get
    \begin{align*}
        |M_1|_{H^{\alpha}} \lesssim |\partial^{-1}r|_{W^{\alpha+2,\infty}}^{N_1}|\partial^{-1}s|_{W^{\alpha+2,\infty}}^{N_2}|r|_{H^{\alpha+3}}^{N_3}|s|_{H^{\alpha+3}}^{N_4}, 
    \end{align*}  
    where $N_1, N_2, N_3$ and $N_4$ are non negative integers such that $(N_3, N_4) \neq (0,0)$. Then, using \eqref{estimates in Sobolev infinity} and Sobolev embeddings, we have
    \begin{align*}
        |M_1|_{H^{\alpha}} \lesssim |r|_{W^{\alpha+4,1}}^{N_1+N_3}|s|_{W^{\alpha+4,1}}^{N_2+N_4}.
    \end{align*}
    Moreover, using the continuity of $\mathrm{F}_{\mu}$ from $W^{\alpha+1,1}(\mathbb{R})$ to $W^{\alpha,1}(\mathbb{R})$ and the algbra properties of $W^{\beta,1}(\mathbb{R})$ for any $\beta \geq 1$, we get
    \begin{align*}
        |M_1|_{W^{\alpha,1}} \lesssim |\partial^{-1}r|_{W^{\alpha+3,\infty}}^{N_1}|\partial^{-1}s|_{W^{\alpha+3,\infty}}^{N_2}|r|_{W^{\alpha+4,1}}^{N_3}|s|_{W^{\alpha+4,1}}^{N_4} \lesssim |r|_{W^{\alpha+4,1}}^{N_1 + N_3}|s|_{W^{\alpha+4,1}}^{N_2+N_4}.
    \end{align*}
\end{proof}

Combining Proposition \ref{From HBX to HWW} and Theorem \ref{From HWh to HBW} we get Theorem \ref{Final theorem introduction} which we recall here for the sake of clarity. 
\begin{mythm}\label{theorem end of the article}
    Let $\alpha \geq 0$. Let $r,s \in \mathcal{C}^1([0,\frac{T}{\varepsilon}], W^{\alpha+7,1}(\mathbb{R}))$ be solutions of the equations \eqref{Decoupled Whitham equations}. Let also $\mathcal{T}_{\rm{I}}, \mathcal{T}_{\rm{D}}$ and $\mathcal{T}_{B}$ be the transformations of Definition \ref{Definition transformations} and define:
    \begin{align}\label{zeta and psi}
        \begin{pmatrix} \zeta_{\Wh} \\ \psi_{\Wh} \end{pmatrix} \colonequals \mathcal{T}_{\rm{I}}(\mathcal{T}_{\rm{D}}(\mathcal{T}_{\rm{B}}(r,s))).
    \end{align}
    Then, there exists $R \in \mathcal{C}^1([0,\frac{T}{\varepsilon}], H^{\alpha}(\mathbb{R})\times H^{\alpha}(\mathbb{R}))$, such that one has:
    \begin{align*}
        \partial_t \begin{pmatrix} \zeta_{\Wh} \\ \psi_{\Wh} \end{pmatrix} = J \nabla (H_{\WW}) (\zeta_{\Wh},\psi_{\Wh}) + (\mu\varepsilon + \varepsilon^2) R, \ \ \forall t \in [0,\frac{T}{\varepsilon}], 
    \end{align*}
    where $H_{\WW}$ is the Hamiltonian defined in \eqref{Hamiltonian Water Waves equations} and $|R|_{H^{\alpha}\times \dot{H}^{\alpha+1}} \leq C(\mu_{\max},|r|_{W^{\alpha+7,1}},|s|_{W^{\alpha+7,1}})$. In particular, $R$ is uniformly bounded in $(\mu,\varepsilon)$ for times $t \in [0,\frac{T}{\max{(\mu,\varepsilon)}}]$ in $H^{\alpha}(\mathbb{R})\times \dot{H}^{\alpha+1}(\mathbb{R})$.
\end{mythm}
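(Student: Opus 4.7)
The proof composes the three transformations $\mathcal{T}_{\rm{B}}$, $\mathcal{T}_{\rm{D}}$, $\mathcal{T}_{\rm{I}}$, transporting the Hamiltonian structure through each and picking up controlled remainders at each step. Since $r, s \in W^{\alpha+7,1}$, I can apply Theorem \ref{From HWh to HBW} at the shifted level $\alpha+1$ (which only requires $W^{(\alpha+1)+6,1} = W^{\alpha+7,1}$) to obtain, for $(r_{\rm{c}}, s_{\rm{c}}) := \mathcal{T}_{\rm{B}}(r,s)$,
\begin{align*}
    \partial_t \begin{pmatrix} r_{\rm{c}} \\ s_{\rm{c}} \end{pmatrix} = J_\mu \nabla H_{\BW}(r_{\rm{c}}, s_{\rm{c}}) + (\mu\varepsilon + \varepsilon^2) R_1,
\end{align*}
with $R_1$ bounded in $H^{\alpha+1} \times H^{\alpha+1}$ (and also in $W^{\alpha+1,1} \times W^{\alpha+1,1}$, which will matter when applying $\partial^{-1}$). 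The one extra derivative in this estimate is precisely the budget consumed by the next transformation.

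Next, I propagate this identity through $\mathcal{T}_{\rm{D}}$. A direct computation of the Jacobian of $\mathcal{T}_{\rm{D}}$, together with the self-adjointness of $\mathrm{F}_\mu$ as a Fourier multiplier with real even symbol, shows that $\mathcal{T}_{\rm{D}}$ is an \emph{exact} symplectic change of variables: $(\partial \mathcal{T}_{\rm{D}}/\partial(r_{\rm{c}},s_{\rm{c}})) J_\mu (\partial \mathcal{T}_{\rm{D}}/\partial(r_{\rm{c}},s_{\rm{c}}))^{*} = \widetilde{J}$, where $\widetilde{J} = \bigl(\begin{smallmatrix} 0 & -\partial_x \\ -\partial_x & 0 \end{smallmatrix}\bigr)$. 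Combining this with Property \ref{From HBW to H0 epsilon H1} by the chain rule, the pair $(\zeta_{\Wh}, v_{\Wh}) := \mathcal{T}_{\rm{D}}(r_{\rm{c}}, s_{\rm{c}})$ satisfies
\begin{align*}
    \partial_t \begin{pmatrix} \zeta_{\Wh} \\ v_{\Wh} \end{pmatrix} = \widetilde{J} \nabla (\widetilde{H}_0 + \varepsilon \widetilde{H}_1)(\zeta_{\Wh}, v_{\Wh}) + (\mu\varepsilon + \varepsilon^2) R_2,
\end{align*}
with $R_2 \in H^\alpha \times H^\alpha$ (the loss of one derivative coming from the $\mathrm{F}_\mu^{-1}$ entry of the Jacobian, via Property \ref{Boundedness of the Poisson tensor}). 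Applying $\mathcal{T}_{\rm{I}}$ amounts to introducing the primitive $\psi_{\Wh} := \int_0^x v_{\Wh}\,\dy$: the first row rewrites directly as $\partial_t \zeta_{\Wh} = \delta_\psi(H_0 + \varepsilon H_1)(\zeta_{\Wh}, \psi_{\Wh}) + (\mu\varepsilon+\varepsilon^2)R_2^{(1)}$, while integrating the second row in $x$ yields $\partial_t \psi_{\Wh} = -\delta_\zeta(H_0 + \varepsilon H_1)(\zeta_{\Wh}, \psi_{\Wh}) + (\mu\varepsilon+\varepsilon^2)\partial^{-1}R_2^{(2)}$ (the integration constant is irrelevant since it does not affect $\partial_x \psi_{\Wh}$). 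The resulting remainder sits in $H^\alpha \times \dot H^{\alpha+1}$. A final application of Proposition \ref{From HBX to HWW} replaces $H_0 + \varepsilon H_1$ by $H_{\WW}$ at the cost of an additional $\mu\varepsilon M(\alpha+4)|\psi_{\Wh}|_{\dot H^{\alpha+4}}$ error, which is absorbed into the final $R$.

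The only genuine difficulty is the bookkeeping of Sobolev regularities, which determines the precise exponent $\alpha+7$ in the hypothesis. Three constraints combine: (i) upgrading $R_1$ to $H^{\alpha+1}$ through Theorem \ref{From HWh to HBW} needs $W^{\alpha+7,1}$; (ii) controlling $|\psi_{\Wh}|_{\dot H^{\alpha+4}} = |v_{\Wh}|_{H^{\alpha+3}}$ for the final Proposition \ref{From HBX to HWW} traces back through $\mathcal{T}_{\rm{D}}$ (losing one derivative via $\mathrm{F}_\mu^{-1}$) and $\mathcal{T}_{\rm{B}}$ (whose Jacobian bounds were established in Proposition \ref{proposition explicit formulation of T}) to a $W^{\alpha+5,1}$ requirement; (iii) the 1D embedding $W^{k+1,1}(\mathbb R) \hookrightarrow H^k(\mathbb R)$ is used throughout to translate the $W^{\alpha+7,1}$ input into Sobolev control of the intermediate objects. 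Verifying the exact symplecticity of $\mathcal{T}_{\rm{D}}$ is a routine matrix multiplication, and the definition of $\partial^{-1}$ from $L^1$ to $L^\infty$ given in Definition/Property \ref{primitive operator} is applicable here because $R_2^{(2)}$ inherits the $W^{\alpha,1}$ control of $R_1$ along the chain.
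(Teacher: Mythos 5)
Your proposal follows essentially the same route as the paper: apply Theorem \ref{From HWh to HBW} one Sobolev level higher (which is exactly what forces the $W^{\alpha+7,1}$ hypothesis), transport the Hamiltonian structure through $\mathcal{T}_{\rm{D}}$ using Property \ref{From HBW to H0 epsilon H1} and the Jacobian change of Poisson tensor, pass to $(\zeta_{\Wh},\psi_{\Wh})$ via $\mathcal{T}_{\rm{I}}$, and close with Proposition \ref{From HBX to HWW}. The two small places where you expand on the paper are both sound: the explicit verification that $(\partial\mathcal{T}_{\rm{D}}/\partial(r,s))\,J_{\mu}\,(\partial\mathcal{T}_{\rm{D}}/\partial(r,s))^{*}=\widetilde{J}$ (a routine multiplication using the self-adjointness of $\mathrm{F}_{\mu}^{-1}$, which the paper invokes but does not write out), and the integration of the second-row remainder --- though invoking $\partial^{-1}$ there is slightly more than needed, since the target seminorm is $\dot{H}^{\alpha+1}$ and one only needs $\partial_x$ of the integrated remainder to recover $R_2^{(2)}\in H^{\alpha}$, regardless of any choice of antiderivative; no $W^{\alpha,1}$-control of $R_2^{(2)}$ is actually required at that step.
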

\begin{proof}
    We start by defining the quantities 
    \begin{align}\label{zeta and partial psi}
        \begin{pmatrix} \zeta_{\Wh} \\ v_{\Wh} \end{pmatrix} \colonequals \mathcal{T}_{\rm{D}}(\mathcal{T}_{\rm{B}}(r,s)).
    \end{align}
    Differentiating \eqref{zeta and partial psi} in time and using Theorem \ref{From HWh to HBW} we get
    \begin{align*}
        \partial_t \begin{pmatrix} \zeta_{\Wh} \\ v_{\Wh} \end{pmatrix} = & \frac{\partial \mathcal{T}_{\rm{D}}}{\partial(r,s)}(\mathcal{T}_{\rm{B}}(r,s)) \frac{\partial \mathcal{T}_{\rm{B}}}{\partial(r,s)} \partial_t \begin{pmatrix} r \\ s \end{pmatrix} \\
        = &\frac{\partial \mathcal{T}_{\rm{D}}}{\partial(r,s)}(\mathcal{T}_{\rm{B}}(r,s)) J_{\mu} \nabla(H_{\BW})(\mathcal{T}_{\rm{B}}(r,s)) + (\mu\varepsilon + \varepsilon^2)\frac{\partial \mathcal{T}_{\rm{D}}}{\partial(r,s)}(\mathcal{T}_{\rm{B}}(r,s))R_1,
    \end{align*}
    where $\frac{\partial \mathcal{T}_{\rm{D}}}{\partial(r,s)}(\mathcal{T}_{\rm{B}}(r,s)) = \begin{pmatrix} 1 & 1 \\ \mathrm{F}_{\mu}^{-1}[\circ] & -\mathrm{F}_{\mu}^{-1}[\circ] \end{pmatrix}$ and $R_1$ is the rest in Theorem \ref{From HWh to HBW}. Using now Proposition \ref{From HBW to H0 epsilon H1} we get
    \begin{align*}
        \partial_t \begin{pmatrix} \zeta_{\Wh} \\ v_{\Wh} \end{pmatrix} &= \widetilde{J} \nabla(\widetilde{H}_0 + \varepsilon \widetilde{H}_1)(\mathcal{T}_{\rm{D}}(\mathcal{T}_{\rm{B}}(r,s)) + (\mu\varepsilon + \varepsilon^2)R_2 \\
        &= \widetilde{J} \nabla(\widetilde{H}_0 + \varepsilon \widetilde{H}_1)(\zeta_{\Wh},v_{\Wh}) + (\mu\varepsilon + \varepsilon^2)R_2,
    \end{align*}
    where $\widetilde{J} = \begin{pmatrix} 0 & -\partial_x \\ -\partial_x & 0 \end{pmatrix}$ and $|R_2|_{H^{\alpha}\times H^{\alpha}} \leq C(|r|_{W^{\alpha+7,1}},|s|_{W^{\alpha+7,1}})$ (because for any $\beta \geq 0$ and any $u \in W^{\beta+1,1}(\mathbb{R})$, $|\mathrm{F}_{\mu}^{-1}[u]|_{H^{\beta}} \lesssim |u|_{H^{\beta+1/2}} \lesssim |u|_{W^{\beta+1}}$). Changing the unknowns into $(\zeta_{\Wh},\psi_{\Wh})$ we obtain
    \begin{align*}
        \partial_t \begin{pmatrix} \zeta_{\Wh} \\ \psi_{\Wh} \end{pmatrix} = J \nabla(H_0 + \varepsilon H_1)(\zeta_{\Wh},\psi_{\Wh}) + (\mu\varepsilon + \varepsilon^2) R_3, 
    \end{align*}
    where $J = \begin{pmatrix} 0 & 1 \\ -1 & 0 \end{pmatrix}$ and $|R_3|_{H^{\alpha}\times \dot{H}^{\alpha+1}} \leq C(|r|_{W^{\alpha+7,1}},|s|_{W^{\alpha+7,1}})$  So that
    \begin{align*}
        \partial_t \begin{pmatrix} \zeta_{\Wh} \\ \psi_{\Wh} \end{pmatrix} &= J \nabla(H_{\WW})(\zeta_{\Wh}, \psi_{\Wh}) + (J\nabla(H_0 + \varepsilon H_1)(\zeta_{\Wh},\psi_{\Wh}) - J\nabla (H_{\WW})(\zeta_{\Wh},\psi_{\Wh}))\\
        &+ (\mu\varepsilon + \varepsilon^2)R_2.
    \end{align*}
    Using Proposition \ref{From HBX to HWW} we get the result. The uniform boundedness in $(\mu,\varepsilon)$ for times $t \in [0,\frac{T}{\max{(\mu,\varepsilon)}}]$ of the remainder comes from the lemma \ref{Solutions of the Whitham equations in W alpha,1}.
\end{proof}

It only remains to prove Corollary \ref{Final corollary}.
\begin{proof}
    We construct $(r_0,s_0)$ such that
    \begin{align}\label{error initial condition}
        |\mathcal{T}_{\rm{I}}(\mathcal{T}_{\rm{D}}(\mathcal{T}_{\rm{B}}(r_0,s_0)))-(\zeta_0,\psi_0)|_{H^{\alpha}\times \dot{H}^{\alpha+1}} \lesssim \varepsilon^2.
    \end{align}
     We remark first that the transformations $\mathcal{T}_{\rm{I}}$ and $\mathcal{T}_{\rm{D}}$ are invertible with, for any $\beta \geq 0$, $\mathcal{T}_{\rm{I}}^{-1}:W^{\beta,1}(\mathbb{R})\times \dot{W}^{\beta+1,1}(\mathbb{R}) \to W^{\beta,1}(\mathbb{R})\times W^{\beta,1}(\mathbb{R})$ and $\mathcal{T}_{\rm{D}}^{-1}:W^{\beta+1,1}(\mathbb{R})\times W^{\beta+1,1}(\mathbb{R}) \to W^{\beta,1}(\mathbb{R})\times W^{\beta,1}(\mathbb{R})$ due to the continuity of $\mathrm{F}_{\mu}$ from $W^{\beta+1,1}(\mathbb{R})$ to $W^{\beta,1}(\mathbb{R})$. Moreover, by definition (see \eqref{explicit formulation of T}), we can write $\mathcal{T}_{\rm{B}}$ under the form
    \begin{align*}
        \mathcal{T}_{\rm{B}} = \rm{I}d + \varepsilon \widetilde{\mathcal{T}_{\rm{B}}},  
    \end{align*}
    where $\rm{I}d$ is the identity and for any $\beta \geq 0$, $\widetilde{\mathcal{T}_{\rm{B}}}: W^{\beta+1,1}(\mathbb{R})\times W^{\beta+1,1}(\mathbb{R}) \to W^{\beta,1}(\mathbb{R})\times W^{\beta,1}(\mathbb{R})$. We define the transformation $\mathcal{T}_{\rm{B}}^{\inv}: W^{\beta+1,1}(\mathbb{R})\times W^{\beta+1,1}(\mathbb{R}) \to W^{\beta,1}(\mathbb{R})\times W^{\beta,1}(\mathbb{R})$ by
    \begin{align*}
        \mathcal{T}_{\rm{B}}^{\inv} = \rm{I}d - \varepsilon \widetilde{\mathcal{T}_{\rm{B}}}.
    \end{align*}
    One has for any $(\eta, w) \in W^{\beta+2,1}(\mathbb{R})\times W^{\beta+2,1}(\mathbb{R})$, $\mathcal{T}_{\rm{B}} ( \mathcal{T}_{\rm{B}}^{\inv}(\eta,w)) = \begin{pmatrix} \eta \\ w \end{pmatrix}+ \varepsilon^2 R$ where $|R|_{W^{\beta,1}\times W^{\beta,1}} \leq C(|\eta|_{W^{\beta+2,1}},|w|_{W^{\beta+2,1}})$. So that 
    \begin{align}\label{r0 and s0}
        (r_0,s_0) = \mathcal{T}_{\rm{B}}^{\inv}(\mathcal{T}_{\rm{D}}^{-1}(\mathcal{T}_{\rm{I}}^{-1}(\zeta_0,\psi_0)))
    \end{align}
    satisfy \eqref{error initial condition}.
\end{proof}

\label{From the Hamiltonian of the Whitham-Boussinesq system under normal form to two decoupled Whitham equations}

\appendix
\section{Technical tools}
\begin{mypp}\label{product estimate}(Product estimates)
    \begin{enumerate}
        \item Let $t_0 > 1/2$, $s\geq-t_0$ and $f \in H^{\alpha}\cap H^{t_0}(\mathbb{R}), g\in H^{\alpha}(\mathbb{R})$. Then $fg \in H^{\alpha}(\mathbb{R})$ and 
        \begin{align*}
            |fg|_{H^{\alpha}} \lesssim |f|_{H^{\max{(t_0,\alpha)}}}|g|_{H^{\alpha}}
        \end{align*}
        \item Let $\alpha_1, \alpha_2 \in \mathbb{R}$ be such that $\alpha_1+\alpha_2 \geq 0$. Then for all $\alpha\leq \alpha_j$ $(j = 1,2)$ and $\alpha < \alpha_1 + \alpha_2 - 1/2$, and all $f\in H^{\alpha_1}(\mathbb{R}), g \in H^{\alpha_2}(\mathbb{R})$, one has $fg \in H^{\alpha}(\mathbb{R})$ and 
        \begin{align*}
            |fg|_{H^{\alpha}} \lesssim |f|_{H^{\alpha_1}}|g|_{H^{\alpha_2}}
        \end{align*}
    \end{enumerate}
\end{mypp}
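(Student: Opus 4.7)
The plan is to prove both estimates via Bony's paraproduct decomposition. Fix a Littlewood--Paley partition $\{\Delta_{j}\}_{j\geq -1}$, with $\Delta_{j} = \varphi(2^{-j}D)$ for $j\geq 0$ and $\Delta_{-1} = \chi(D)$ for suitable bump functions, and the associated low-frequency cut-offs $S_{j} = \sum_{k\leq j-1}\Delta_{k}$. Recall the Littlewood--Paley characterization $|u|_{H^{\alpha}}^{2} \simeq \sum_{j\geq -1}2^{2j\alpha}|\Delta_{j}u|_{L^{2}}^{2}$ and Bony's decomposition
\begin{align*}
    fg \;=\; T_{f}g + T_{g}f + R(f,g),\quad T_{f}g \colonequals \sum_{j\geq 0} S_{j-1}(f)\,\Delta_{j}g,\quad R(f,g) \colonequals \sum_{|j-k|\leq 1}\Delta_{j}f\,\Delta_{k}g.
\end{align*}
Each block $S_{j-1}(f)\,\Delta_{j}g$ has Fourier support in an annulus of size $2^{j}$, whereas each remainder block $\Delta_{j}f\,\Delta_{k}g$ with $|j-k|\leq 1$ is only supported in the ball of radius $\lesssim 2^{j}$. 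This asymmetry dictates which term is easy and which is delicate.

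For item (1), $t_{0}>1/2$ gives the embedding $H^{t_{0}}\hookrightarrow L^{\infty}$, so $|S_{j-1}f|_{L^{\infty}}\lesssim |f|_{H^{t_{0}}}$ uniformly in $j$. Combining this with the annulus localization of $T_{f}g$ and Plancherel yields
\begin{align*}
    |T_{f}g|_{H^{\alpha}}^{2} \;\lesssim\; \sum_{j}2^{2j\alpha}|S_{j-1}f|_{L^{\infty}}^{2}|\Delta_{j}g|_{L^{2}}^{2} \;\lesssim\; |f|_{H^{t_{0}}}^{2}\,|g|_{H^{\alpha}}^{2}.
\end{align*}
The symmetric piece $T_{g}f$ is handled the same way but places $\Delta_{j}f$ in $L^{2}$ and $S_{j-1}g$ in $L^{\infty}$; when $\alpha \geq t_{0}$ this produces the factor $|g|_{H^{\alpha}}|f|_{H^{\alpha}}$, which together with the paraproduct bound above accounts for the $H^{\max(t_{0},\alpha)}$ norm on $f$. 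The remainder $R(f,g)$ is handled by placing one of the two factors in $L^{\infty}$ via $H^{t_{0}}\hookrightarrow L^{\infty}$ and then using $\alpha \geq -t_{0}$ to sum the resulting dyadic series.

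For item (2), I would repeat the paraproduct analysis with the refined indices, using Bernstein $|\Delta_{j}u|_{L^{\infty}}\lesssim 2^{j/2}|\Delta_{j}u|_{L^{2}}$ to compensate for the absence of an $L^{\infty}$ embedding. The truly delicate term is $R(f,g)$: since its blocks are only ball-localized, one writes
\begin{align*}
    |\Delta_{q}R(f,g)|_{L^{2}} \;\lesssim\; \sum_{j\geq q - N_{0}} 2^{j/2}\,|\Delta_{j}f|_{L^{2}}\,|\Delta_{j'}g|_{L^{2}},\qquad |j-j'|\leq 1,
\end{align*}
multiplies by $2^{q\alpha}$, and rewrites the right-hand side as a convolution on the dyadic scale with kernel $2^{(q-j)\alpha}$ and summand $2^{j(\alpha + 1/2 - \alpha_{1} - \alpha_{2})}(2^{j\alpha_{1}}|\Delta_{j}f|_{L^{2}})(2^{j\alpha_{2}}|\Delta_{j'}g|_{L^{2}})$. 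Young's inequality in $\ell^{2}$ then yields $|R(f,g)|_{H^{\alpha}} \lesssim |f|_{H^{\alpha_{1}}}|g|_{H^{\alpha_{2}}}$ provided the exponent $\alpha + 1/2 - \alpha_{1} - \alpha_{2}$ is strictly negative, i.e., $\alpha < \alpha_{1}+\alpha_{2}-1/2$. This strict inequality is the sharp summability threshold and constitutes the main obstacle; the constraints $\alpha \leq \alpha_{j}$ ensure that the kernel $2^{(q-j)\alpha}$ remains bounded in the relevant regime, and $\alpha_{1}+\alpha_{2}\geq 0$ prevents the low-frequency contribution from destabilizing the sum. Everything else amounts to bookkeeping through Bernstein's inequality and the square-function characterization of $H^{\alpha}$.
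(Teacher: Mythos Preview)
The paper does not actually prove this proposition: its ``proof'' is a one-line citation to Appendix~B.1 of Lannes' monograph \cite{WWP}. Your paraproduct argument is precisely the standard route to these estimates and is, in all likelihood, what one finds in the cited reference; so rather than a different approach, you have supplied the argument that the paper deliberately outsourced.

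One small gap worth flagging: in item~(1) your treatment of $T_{g}f$ only covers the case $\alpha \geq t_{0}$ explicitly (where $g\in H^{\alpha}\hookrightarrow L^{\infty}$). In the complementary range $-t_{0}\leq \alpha < t_{0}$ one cannot simply place $S_{j-1}g$ in $L^{\infty}$, since $g$ need not be bounded; instead one exploits that $f\in H^{t_{0}}$ to put $\Delta_{j}f$ in $L^{\infty}$ via Bernstein, gaining the decay $2^{j(1/2-t_{0})}$ which is then spent against the low-frequency growth of $S_{j-1}g$ measured in $L^{2}$. This is routine bookkeeping in the same spirit as your remainder analysis, but as written your sketch leaves that regime unaddressed.
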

\begin{proof}
See Appendix B.1 in \cite{WWP}.
\end{proof}

\begin{mypp}\label{Composition estimate}(Composition estimates)
    Let $G : \mathbb{R} \to \mathbb{R}$ be a smooth function such that $G(0)=0$. Also, let $t_0 > 1/2$, $\alpha \geq 0$ and $f \in H^{\max{(t_0,\alpha)}}(\mathbb{R})$. Then $G(f) \in H^{\alpha}(\mathbb{R})$ and 
    \begin{align*}
        |G(f)|_{H^{\alpha}} \leq C(|f|_{H^{\max{(t_0,\alpha)}}})|f|_{H^{\alpha}}.
    \end{align*}
\end{mypp}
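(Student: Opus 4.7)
The plan is to split the argument into three stages: a direct $L^2$ estimate handling $\alpha = 0$, an induction on integer $\alpha \geq 1$ based on the chain rule combined with Proposition \ref{product estimate}, and finally an interpolation/paradifferential step to cover non-integer $\alpha$.

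For $\alpha = 0$ I would exploit $G(0)=0$ through the first-order Taylor identity
\begin{align*}
G(f)(x) = f(x)\int_0^1 G'(\tau f(x))\,d\tau,
\end{align*}
from which $|G(f)|_{L^2} \leq \bigl(\sup_{|y|\leq |f|_{L^\infty}}|G'(y)|\bigr)|f|_{L^2}$. The Sobolev embedding $H^{t_0}(\mathbb{R}) \hookrightarrow L^\infty(\mathbb{R})$, valid for $t_0>1/2$, then replaces $|f|_{L^\infty}$ by $|f|_{H^{t_0}}$, giving the result in the required form.

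For integer $\alpha \geq 1$ I would proceed by induction on $\alpha$. The chain rule yields $\partial_x G(f) = G'(f)\partial_x f$, whence
\begin{align*}
|G(f)|_{H^\alpha} \lesssim |G(f)|_{L^2} + |G'(f)\partial_x f|_{H^{\alpha-1}}.
\end{align*}
The first assertion of Proposition \ref{product estimate} with exponent $\alpha-1 \geq -t_0$ then gives $|G'(f)\partial_x f|_{H^{\alpha-1}} \lesssim |G'(f)|_{H^{\max(t_0,\alpha-1)}}|f|_{H^\alpha}$. Since $G'$ need not vanish at $0$, I would split $G'(x) = G'(0) + \widetilde G(x)$ with $\widetilde G(0) = 0$; the constant part contributes only a harmless $|G'(0)|\,|f|_{H^\alpha}$, while the inductive hypothesis applied to $\widetilde G$ at the strictly smaller regularity $\max(t_0,\alpha-1) < \alpha$ controls the remaining term. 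Combined with the base case and an induction on $\lceil\alpha\rceil$, this closes the integer case.

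The main obstacle is the non-integer case, where the chain-rule-plus-product-estimate scheme no longer applies verbatim. The cleanest route is a paralinearization in the spirit of Bony: decompose $G(f) = T_{G'(f)}f + R(f)$, where the paraproduct $T_{G'(f)}$ is continuous on $H^\alpha$ with norm $\lesssim |G'(f)|_{L^\infty} \leq C(|f|_{H^{t_0}})$, and the remainder $R(f)$ gains essentially a full derivative, so that $|R(f)|_{H^\alpha}$ can be bounded via the integer case already established at level $\lceil\alpha\rceil$. Alternatively, one may interpolate (by the real $K$-method) between the estimates at $\lfloor\alpha\rfloor$ and $\lceil\alpha\rceil$, provided the constants from the induction are organized as functions of $|f|_{H^{t_0}}$ times a linear power of $|f|_{H^{\lceil\alpha\rceil}}$. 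Either way, the delicate point is ensuring that the nonlinear constant $C(\cdot)$ remains of Moser-tame form $C(|f|_{H^{\max(t_0,\alpha)}})|f|_{H^\alpha}$, which is precisely what the inductive scheme above was designed to preserve.
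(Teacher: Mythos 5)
The paper does not actually prove this proposition; it simply cites Appendix B.1 of \cite{WWP}, where the Moser-type composition estimate is established via a paralinearization and Littlewood--Paley argument. Your sketch captures the right ideas (first-order Taylor identity for $L^2$, chain rule plus product estimate for integer $\alpha$, paradifferential or interpolation arguments for the fractional case), but the integer induction as written does not close. At the inductive step you invoke Proposition \ref{product estimate} at level $\alpha-1$, reducing to controlling $|G'(f)|_{H^{\max(t_0,\alpha-1)}}$. For the base case $\alpha=1$ this index is $\max(t_0,0)=t_0$, which is a non-integer regularity in general (and equals $\alpha$ when $t_0=1$); either way it is not covered by the $\alpha=0$ estimate and the induction becomes circular. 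The same obstruction persists for any integer $\alpha$ with $\alpha - 1 < t_0$. A minimal repair is to treat $\alpha=1$ directly without the product estimate: bound $|G'(f)\partial_x f|_{L^2}\le|G'(f)|_{L^\infty}|\partial_x f|_{L^2}$ and control $|G'(f)|_{L^\infty}\le C(|f|_{L^\infty})$ via the Sobolev embedding $H^{t_0}\hookrightarrow L^\infty$. With this base case in place, your recursion from $\alpha-1$ to $\alpha$ runs correctly for integers $\alpha\ge 2$.

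A secondary caveat concerns the closing step for non-integer $\alpha$: the real $K$-method does not interpolate the nonlinear map $f\mapsto G(f)$ directly. One has to freeze $f$, write $G(f)=\bigl(\int_0^1 G'(\tau f)\,d\tau\bigr) f$, and interpolate the resulting multiplication operator between $L^2\to L^2$ and $H^N\to H^N$; even then one must be careful that the constant stays of the form $C(|f|_{H^{\max(t_0,\alpha)}})$ rather than depending on the higher endpoint norm $|f|_{H^N}$ with $N=\lceil\alpha\rceil$. The paralinearization $G(f)=T_{G'(f)}f+R(f)$ that you mention sidesteps both difficulties simultaneously and is closer to how the cited reference actually proceeds.
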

\begin{proof}
    See Appendix B.1 in \cite{WWP}.
\end{proof}

\begin{mypp}\label{Quotient estimate}(Quotient estimates)
    Let $t_0>1/2, \alpha \geq-t_0$ and $c_0>0$. Also let $f \in H^{\alpha}(\mathbb{R})$ and $g\in H^{\alpha}\cap H^{t_0}(\mathbb{R})$ be such that for all $x \in \mathbb{R}$, one has $1 + g(X) \geq c_0$. Then $\frac{f}{1+g}$ belongs to $H^{\alpha}(\mathbb{R})$ and 
    \begin{align*}
        |\frac{f}{1+g}|_{H^{\alpha}} \leq C(\frac{1}{c_0},|g|_{H^{\max{(t_0,\alpha)}}})|f|_{H^{\alpha}}
    \end{align*}
\end{mypp}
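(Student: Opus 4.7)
The plan is to reduce the quotient estimate to a composition estimate applied to the smooth nonlinear map $y\mapsto 1/(1+y)$, followed by a standard product estimate against $f$. I would write
\begin{align*}
    \frac{f}{1+g} = f - f\cdot\frac{g}{1+g},
\end{align*}
so that the whole task reduces to controlling $g/(1+g)$ in an appropriate Sobolev norm and then multiplying by $f$.

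First I would set $G(y)=y/(1+y)$. The hypothesis $1+g(x)\geq c_0>0$ forces $g$ to take values in $[-1+c_0,+\infty)$, a region where $G$ is smooth and all its derivatives are bounded by constants depending only on $1/c_0$. Since Proposition \ref{Composition estimate} is stated for a smooth function $G:\mathbb{R}\to\mathbb{R}$ with $G(0)=0$, I would extend $G$ to a globally smooth function $\widetilde{G}$ on $\mathbb{R}$ with $\widetilde{G}(0)=0$ and seminorms controlled by $1/c_0$; this extension coincides with $G$ on the range of $g$. Applying Proposition \ref{Composition estimate} to $\widetilde{G}$ and $g$ then yields, for every $\beta\geq 0$,
\begin{align*}
    \Big|\frac{g}{1+g}\Big|_{H^{\beta}} = |\widetilde{G}(g)|_{H^{\beta}} \leq C\Big(\frac{1}{c_0},|g|_{H^{\max(t_0,\beta)}}\Big)|g|_{H^{\beta}}.
\end{align*}

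The second step splits into two cases. When $\alpha\geq 0$, I take $\beta=\max(t_0,\alpha)$ in the estimate above and apply part 1 of Proposition \ref{product estimate} directly, obtaining
\begin{align*}
    \Big| f\cdot\frac{g}{1+g}\Big|_{H^{\alpha}} \lesssim \Big|\frac{g}{1+g}\Big|_{H^{\max(t_0,\alpha)}} |f|_{H^{\alpha}},
\end{align*}
which closes the proof. When $-t_0\leq\alpha<0$, I instead apply the composition bound with $\beta=t_0>1/2$ to place $g/(1+g)$ in $H^{t_0}(\mathbb{R})$, and then invoke part 2 of Proposition \ref{product estimate} with $\alpha_1=\alpha$ and $\alpha_2=t_0$: the conditions $\alpha_1+\alpha_2\geq 0$, $\alpha\leq\alpha_j$ and $\alpha<\alpha_1+\alpha_2-1/2$ all reduce to $\alpha\geq -t_0$ and $t_0>1/2$, which hold by hypothesis.

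The step most likely to require care is the construction of the extension $\widetilde{G}$ and the bookkeeping of the $1/c_0$ dependence throughout, since Proposition \ref{Composition estimate} as stated hides the dependence of its constant on the smoothness bounds of $G$. Once that dependence is tracked, the rest is a direct assembly of the composition and product estimates recalled in the appendix.
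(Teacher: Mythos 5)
The paper proves this only by citing Appendix~B.1 of \cite{WWP} rather than reproducing an argument, and your reconstruction is the standard Moser-type proof underlying that reference: the decomposition $f/(1+g)=f-f\cdot g/(1+g)$, the smooth extension of $y\mapsto y/(1+y)$ away from the singularity at $y=-1$ (legitimate because $1+g\ge c_0$ keeps $g$ in a region where all derivatives are bounded by constants in $1/c_0$), the composition estimate, and the case split between parts~1 and~2 of Proposition~\ref{product estimate} according to the sign of $\alpha$ all check out. You also correctly identify where the $1/c_0$-dependence actually enters, namely through the $C^k$ seminorms of the extended nonlinearity, which Proposition~\ref{Composition estimate} leaves implicit in its constant.
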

\begin{proof}
    See Appendix B.1 in \cite{WWP}.
\end{proof}

\begin{mydef}\label{definition order of a Fourier multiplier}
We say that a Fourier multiplier $\mathrm{F}(D)$ is of order $\alpha $ $(\alpha \in\mathbb{R})$ and write $\mathrm{F} \in \mathcal{S}^\alpha $ if $\xi \in \mathbb{R} \mapsto F(\xi) \in \mathbb{C}$ is smooth and satisfies
\begin{align*}
    \forall \xi \in \mathbb{R}, \forall\beta\in\mathbb{N}, \ \ \sup_{\xi\in\mathbb{R}} \langle\xi\rangle^{\beta-\alpha }|\partial^{\beta}F(\xi)| < \infty.
\end{align*}
We also introduce the seminorm
\begin{align*}
    \mathcal{N}^\alpha (F) = \sup_{\beta\in\mathbb{N},\beta\leq 4} \sup_{\xi\in\mathbb{R}} \langle\xi\rangle^{\beta-\alpha }|\partial^{\beta}F(\xi)|.
\end{align*}
\end{mydef}

\begin{mypp}\label{Commutator estimates 2.0}
    Let $t_0 > 1/2$, $\alpha  \geq 0$ and $F \in \mathcal{S}^\alpha $. If $f\in H^\alpha \cap H^{t_0+1}$ then, for all $g \in H^{\alpha -1}$,
    \begin{align*}
        |[F(D),f]g|_2 \leq \mathcal{N}^\alpha (F)|f|_{H^{\max{(t_0+1,\alpha )}}}|g|_{H^{\alpha -1}}. 
    \end{align*}
\end{mypp}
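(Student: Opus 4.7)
The plan is to work on the Fourier side, exploiting the cancellation encoded in the symbol difference $F(\xi) - F(\eta)$, which is precisely the source of the one-derivative gain of the commutator over $F(D)$ itself. By Plancherel's identity, it suffices to bound, in $L^2_\xi$, the quantity
\[
\widehat{[F(D),f]g}(\xi) = \int_{\mathbb{R}} \bigl(F(\xi) - F(\eta)\bigr)\hat{f}(\xi - \eta)\hat{g}(\eta)\,d\eta,
\]
obtained by expanding $\widehat{F(D)(fg)}$ and $\widehat{f F(D)g}$ via the convolution formula for the Fourier transform.

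I would then split the $\eta$-integration into three regions corresponding to the different frequency interactions between $f$ (carrying frequency $\xi - \eta$) and $g$ (carrying frequency $\eta$). On the \emph{low-high} region $\{|\xi-\eta|\leq |\eta|/2\}$, the fundamental theorem of calculus gives
\[
F(\xi) - F(\eta) = (\xi - \eta)\int_0^1 F'\bigl(\eta + t(\xi - \eta)\bigr)\,dt,
\]
and since $|\eta + t(\xi - \eta)| \sim |\eta|$ throughout, the seminorm control $|F'(\sigma)| \leq \mathcal{N}^\alpha(F)\langle \sigma \rangle^{\alpha - 1}$ bounds the integrand pointwise by $\mathcal{N}^\alpha(F)|\xi - \eta|\langle \eta \rangle^{\alpha - 1}$. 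Young's convolution inequality $\|u\ast v\|_{L^2} \leq \|u\|_{L^1}\|v\|_{L^2}$, combined with Cauchy--Schwarz using the weight $\langle \cdot \rangle^{-t_0}$ (square-integrable exactly because $t_0 > 1/2$), converts the factor $|\xi - \eta|$ into the $H^{t_0+1}$-norm of $f$ and gives a contribution of size $\mathcal{N}^\alpha(F)|f|_{H^{t_0+1}}|g|_{H^{\alpha-1}}$.

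On the \emph{diagonal} region $\{|\xi - \eta|\sim |\eta|\}$, where $\langle \xi - \eta \rangle \sim \langle \eta \rangle$, I use the direct bound $|F(\xi) - F(\eta)| \lesssim \mathcal{N}^\alpha(F)\langle \eta \rangle^\alpha$ and redistribute the weight as $\langle \eta \rangle^\alpha \lesssim \langle \xi - \eta \rangle \cdot \langle \eta \rangle^{\alpha - 1}$, placing one derivative on $f$ and $\alpha - 1$ on $g$; Young and Cauchy--Schwarz reproduce the same bound. On the remaining \emph{high-low} region $\{|\eta|\leq |\xi-\eta|/2\}$ one has $\langle \xi - \eta \rangle \sim \langle \xi \rangle$ and no symbol cancellation is available, so all $\alpha$ derivatives must be placed on $f$; for $\alpha \leq 1$ the inequality $\langle \xi - \eta \rangle^\alpha \lesssim \langle \xi - \eta \rangle \langle \eta \rangle^{\alpha - 1}$ (which uses $\langle \eta \rangle \lesssim \langle \xi - \eta \rangle$ together with $\alpha - 1 \leq 0$) still yields $|f|_{H^{t_0+1}}|g|_{H^{\alpha-1}}$, while for $\alpha > 1$ I further split according to whether $|\eta|\leq 1$ or $|\eta| > 1$, handling the first sub-region through $\|\hat g\mathbf{1}_{|\eta|\leq 1}\|_{L^1} \lesssim |g|_{L^2} \leq |g|_{H^{\alpha-1}}$ with all the weight $\langle \xi - \eta \rangle^\alpha$ absorbed by $f$, and the second through a Schur-type weighted Cauchy--Schwarz in $\eta$ pairing $\langle \xi - \eta \rangle^\alpha|\hat f(\xi - \eta)|$ against $\langle \eta \rangle^{\alpha - 1}|\hat g(\eta)|$.

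The main obstacle is precisely the high-low region when $\alpha > 1$: without symbol cancellation, a brute-force Young estimate would force $f$ into $H^{\alpha + t_0}$, and it is the careful subdivision on $|\eta|$ coupled with a judicious distribution of the Sobolev weights between $f$ and $g$ that produces the sharp threshold $|f|_{H^{\max(t_0+1, \alpha)}}|g|_{H^{\alpha-1}}$ stated in the proposition. Everything else---Plancherel, elementary convolution estimates, and tracking the seminorm $\mathcal{N}^\alpha(F)$ through each bound---is routine bookkeeping.
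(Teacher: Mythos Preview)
Your argument is correct and follows the standard Fourier-side paraproduct decomposition underlying Kato--Ponce type commutator estimates. The paper itself does not prove this proposition: its entire proof reads ``See Appendix~B.2 in \cite{WWP},'' deferring to Lannes' monograph. Your frequency trichotomy (low-high, diagonal, high-low) together with the mean-value identity $F(\xi)-F(\eta)=(\xi-\eta)\int_0^1 F'(\eta+t(\xi-\eta))\,dt$ on the low-high region is exactly the mechanism producing the one-derivative gain, and is almost certainly the argument given in the cited reference.

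One small point deserves tightening. In the high-low region with $\alpha>1$ and $|\eta|>1$, the ``Schur-type weighted Cauchy--Schwarz'' you describe, pairing $\langle\xi-\eta\rangle^\alpha|\hat f(\xi-\eta)|$ against $\langle\eta\rangle^{\alpha-1}|\hat g(\eta)|$, leads after Fubini to the requirement $\int_{|\eta|>1}\langle\eta\rangle^{-2(\alpha-1)}\,d\eta<\infty$, i.e.\ $\alpha>3/2$. For the residual range $1<\alpha\leq 3/2$ (where automatically $\alpha<t_0+1$, hence $\max(t_0+1,\alpha)=t_0+1$) you should instead redistribute the weight as
\[
\langle\xi-\eta\rangle^\alpha=\langle\xi-\eta\rangle^{t_0+1}\,\langle\xi-\eta\rangle^{\alpha-t_0-1}\lesssim\langle\xi-\eta\rangle^{t_0+1}\,\langle\eta\rangle^{\alpha-t_0-1}=\langle\xi-\eta\rangle^{t_0+1}\,\langle\eta\rangle^{-t_0}\,\langle\eta\rangle^{\alpha-1},
\]
using $\langle\eta\rangle\lesssim\langle\xi-\eta\rangle$ and $\alpha-t_0-1<0$; Young's inequality then gives $|f|_{H^{t_0+1}}|g|_{H^{\alpha-1}}$ directly, without any subdivision on $|\eta|$. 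This is a minor bookkeeping correction; the overall strategy is sound.
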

\begin{proof}
    See Appendix B.2 in \cite{WWP} for a proof of this proposition.
\end{proof}

\begin{mypp}\label{Theorem 3.15}
    Let $\alpha\geq 2$. Let $\zeta \in H^{\alpha+2}(\mathbb{R})$ be such that \eqref{Non-Cavitation Hypothesis} is satisfied and $\psi \in \dot{H}^{\alpha+2}(\mathbb{R})$.
    Then one has 
    \begin{align*}
            |\frac{1}{\mu}\mathcal{G}^{\mu}[\varepsilon\zeta]\psi|_{H^{\alpha}} \leq M(s+2)|\psi|_{\dot{H}^{\alpha+2}}.
    \end{align*}
\end{mypp}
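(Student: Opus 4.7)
The plan is to adopt the flatten-the-domain / anisotropic energy-estimate strategy from Chapter 3 of \cite{WWP}, of which this statement is essentially a specialization. First, I would flatten the fluid domain by the diffeomorphism $(x,z)\mapsto(x,\sigma)$ with $\sigma=(z+1)/h$ and $h=1+\varepsilon\zeta$. The defining Laplace problem for $\phi$ becomes an elliptic equation $\mathrm{div}_{x,\sigma}(P\nabla_{x,\sigma}\tilde\phi)=0$ on the fixed strip $\mathbb{R}\times(0,1)$ with boundary data $\tilde\phi|_{\sigma=1}=\psi$, $\partial_\sigma\tilde\phi|_{\sigma=0}=0$. Thanks to the product, composition and quotient estimates (Propositions \ref{product estimate}, \ref{Composition estimate}, \ref{Quotient estimate}) and the non-cavitation bound \eqref{Non-Cavitation Hypothesis}, the entries of $P$ and of $P^{-1}$ are bounded in $H^{\alpha+1}$ uniformly in $\sigma$ by $M(\alpha+2)$.

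Second, I would perform an anisotropic energy estimate in the shallow-water norm
\begin{equation*}
\|\tilde\phi\|_{\alpha,\mu}^2:=\int_0^1\bigl(|\Lambda^{\alpha+1/2}\partial_\sigma\tilde\phi|_{L^2}^2+\mu\,|\Lambda^{\alpha+1/2}\partial_x\tilde\phi|_{L^2}^2\bigr)\,d\sigma,
\end{equation*}
where $\Lambda=(1-\partial_x^2)^{1/2}$. Applying $\Lambda^{\alpha+1/2}$ to the elliptic equation, testing against $\partial_\sigma\tilde\phi$, and handling the boundary contribution at $\sigma=1$ together with the commutators between $\Lambda^{\alpha+1/2}$ and the coefficients of $P$, one obtains $\|\tilde\phi\|_{\alpha,\mu}\lesssim M(\alpha+2)\,|\psi|_{\dot H^{\alpha+2}}$. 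Third, expressing $\tfrac{1}{\mu}\mathcal{G}^\mu[\varepsilon\zeta]\psi=\sqrt{1+\varepsilon^2(\partial_x\zeta)^2}\,\tfrac{1}{\mu}\partial_{\mathbf{n}^\mu}\phi|_{z=\varepsilon\zeta}$ in flattened coordinates yields a linear combination of the surface traces of $\partial_\sigma\tilde\phi$ and $\partial_x\tilde\phi$ at $\sigma=1$, with coefficients controlled in $H^{\alpha+1/2}$ by $M(\alpha+2)$. The prefactor $1/\mu$ absorbs the $\sqrt\mu$-weight on $\partial_x\tilde\phi$ coming from the inverse Jacobian of the flattening, so that the anisotropic trace inequality $|u|_{\sigma=1}|_{H^\alpha}\lesssim\|u\|_{\alpha,\mu}$ closes the estimate with the stated constant.

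The main technical obstacle is the careful $\mu$-tracking: a naive, non-anisotropic $H^1_{x,\sigma}$ estimate would produce a factor $1/\sqrt\mu$ on $\partial_x\tilde\phi$ and force the weaker bound $M(\alpha+4)|\psi|_{\dot H^{\alpha+4}}$ on the right-hand side, matching Proposition \ref{shallow water approximation of the Dirichlet-Neumann operator} rather than the present statement. The anisotropic framework of \cite{WWP}, with its $\sqrt\mu$-weighted horizontal derivatives, is precisely designed to recover the sharp $\alpha+2$-derivative count uniformly in the shallow-water parameter $\mu\in[0,\mu_{\max}]$.
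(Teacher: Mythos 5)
The paper's own proof of this proposition is a one-line citation: it invokes Theorem~3.15 of Lannes' monograph \cite{WWP} directly, without rederiving it. You instead rebuild that theorem from scratch, which is a legitimate alternative, and the skeleton you outline (flattening the strip, anisotropic energy estimates with the $\sqrt{\mu}$-weighted horizontal derivative, trace inequality) is indeed the WWP strategy.

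However, there is a genuine gap at the one place where the proof is delicate: the absorption of the $1/\mu$ prefactor. After flattening, the normal derivative at the surface produces a term of the form $\tfrac{1}{\mu}\,\partial_\sigma\tilde\phi\big|_{\sigma=1}$ (up to $M(\alpha+2)$-controlled coefficients). Your argument handles the $\partial_x\tilde\phi$ piece by trading the explicit $\mu$ in front of it against the $\sqrt\mu$-weight in the anisotropic norm, which is fine; but for the $\partial_\sigma\tilde\phi$ piece the anisotropic trace inequality $|u|_{\sigma=1}|_{H^\alpha}\lesssim\|u\|_{\alpha,\mu}$ applied to $u=\partial_\sigma\tilde\phi$ only yields an $O(1)$ bound, and after dividing by $\mu$ you are left with an $O(1/\mu)$ object, not the stated $O(1)$ one. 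Your intermediate energy estimate $\|\tilde\phi\|_{\alpha,\mu}\lesssim M(\alpha+2)|\psi|_{\dot H^{\alpha+2}}$ is not sharp enough to repair this: it does not carry the factor of $\mu$ that is needed.

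The missing idea, which is the heart of the WWP argument, is structural: one must use the interior equation together with the homogeneous Neumann condition $\partial_\sigma\tilde\phi|_{\sigma=0}=0$ at the flat bottom to write
\begin{align*}
\partial_\sigma\tilde\phi\big|_{\sigma=1}\;=\;\int_0^1\partial_\sigma^2\tilde\phi\,\mathrm{d}\sigma\;=\;-\mu\int_0^1\partial_x\bigl(\text{coefficients}\cdot\nabla_{x,\sigma}\tilde\phi\bigr)\,\mathrm{d}\sigma,
\end{align*}
so that $\tfrac{1}{\mu}\,\partial_\sigma\tilde\phi|_{\sigma=1}$ appears as a horizontal divergence of a $\sigma$-average of the (unweighted) gradient, with \emph{no} residual $1/\mu$. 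Equivalently, this is why WWP measures $\psi$ in the $\mathfrak{P}$-adapted norm rather than simply in $\dot H^{\alpha+2}$ inside the elliptic estimate, and only passes to $|\psi|_{\dot H^{\alpha+2}}$ at the very end using $\mathfrak{P}\leq |D|$. Without either the vertical-integration identity or the $\mathfrak{P}$-norm bookkeeping, the sketch does not close, even though all the individual ingredients you list (flattening, non-cavitation, product/quotient/composition estimates, anisotropic trace) are the correct ones.
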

\begin{proof}
    This is a direct consequence of Theorem 3.15 in \cite{WWP}.
\end{proof}

\bibliographystyle{plain}
\bibliography{main.bbl}

\end{document}